\newtheorem{theorem}{Theorem}[section]
\newtheorem{corollary}[theorem]{Corollary}
\newtheorem{lemma}[theorem]{Lemma}
\newtheorem{proposition}[theorem]{Proposition}
\newtheorem{remark}[theorem]{Remark}
\newtheorem{example}[theorem]{Example}
\def\RR{{\mathbb{R}}}
\def\11{\textbf{$1$}}
\def\CC{{\mathbb{C}}}
\begin{document}

\title[Orthogonal forms and orthogonality preservers]{Orthogonal forms and orthogonality preservers on real function algebras}

\author[J.J. Garc{\' e}s]{Jorge J. Garc{\' e}s}
\email{jgarces@ugr.es}
\address{Departamento de An{\'a}lisis Matem{\'a}tico, Facultad de
Ciencias, Universidad de Granada, 18071 Granada, Spain.}

\author[A.M. Peralta]{Antonio M. Peralta}
\email{aperalta@ugr.es}
\address{Departamento de An{\'a}lisis Matem{\'a}tico, Facultad de
Ciencias, Universidad de Granada, 18071 Granada, Spain.}

\thanks{Authors partially supported by the Spanish Ministry of Economy and Competitiveness,
D.G.I. project no. MTM2011-23843, and Junta de Andaluc\'{\i}a grants FQM0199 and
FQM3737.}

\subjclass[2010]{Primary 46H40; 4J10, Secondary 47B33; 46L40; 46E15; 47B48.}

\keywords{Orthogonal form, real C$^*$-algebra, orthogonality preservers, disjointness preserver, separating map}

\date{}

\begin{abstract} We initiate the study of orthogonal forms on a real C$^*$-algebra.
Motivated by previous contributions, due to Ylinen, Jajte, Paszkiewicz and Goldstein,
we prove that for every continuous orthogonal form $V$ on a commutative real C$^*$-algebra, $A$,
there exist functionals $\varphi_1$ and $\varphi_2$ in $A^{*}$ satisfying
$$V(x,y) = \varphi_1 (x y) + \varphi_2 ( x y^*),$$ for every $x,y$ in $A$.  We describe the general form of a
(not-necessarily continuous) orthogonality preserving linear map
between unital commutative real C$^*$-algebras. As a consequence,
we show that every orthogonality preserving linear bijection
between unital commutative real C$^*$-algebras is continuous.
\end{abstract}

\maketitle
\thispagestyle{empty}

\section{Introduction and preliminaries}

Elements $a$ and $b$ in a real or complex C$^*$-algebra, $A$,
are said to be \emph{orthogonal} (denoted by $a \perp
b$) if $a b^* = b^* a=0$. A bounded bilinear form $V: A\times A \to \mathbb{K}$
is called \emph{orthogonal} (resp., \emph{orthogonal on self-adjoint elements})
whenever $V(a,b^*)=0$ for every $a\perp b$ in $A$ (resp., in the self-adjoint part of $A$). All the forms considered in this paper are assumed to be continuous.
Motivated by the seminal contributions by K Ylinen \cite{Yl75} and R. Jajte and A. Paszkiewicz \cite{JajPaszk}, S. Goldstein proved that every orthogonal form $V$ on a (complex) C$^*$-algebra, $A,$ is of the form $$V(x,y) = \phi (x y ) +  \psi (x y ) \ \ (x,y\in A),$$ where $\phi$ and $\psi$ are two functionals in $A^*$ (cf. \cite[Theorem 1.10]{Gold}). A simplified proof of Goldstein's theorem was published by U. Haagerup and N.J. Laustsen in \cite{HaaLaut}. This characterisation has emerged as a very useful tool in the study of bounded linear operators between C$^*$-algebras which are orthogonality or disjointness preserving (see, for example, \cite{BurFerGarMarPe,BurFerGarPe}).

The first aim of this paper is to study orthogonal forms on the wider class of real C$^*$-algebras. Little or nothing is known about the structure of an orthogonal form $V$ on a real C$^*$-algebra. At first look, one is tempted to consider the canonical complex bilinear extension of $V$ to a form on the complexification, $A_{\mathbb{C}} = A\oplus iA$, of $A$ and, when the latter is orthogonal, to apply Goldstein's theorem. However, the complex bilinear extension of $V$ to $A_{\mathbb{C}} \times A_{\mathbb{C}}$, need not be, in general, orthogonal (see Example \ref{exam complexification}). The study of orthogonal forms on real C$^*$-algebras requires a completely independent strategy; surprisingly the resulting forms will enjoy a different structure to that established by S. Goldstein in the complex setting.

In section 2 we establish some structure results for orthogonal forms on a general real C$^*$-algebra, showing, among other properties, that every orthogonal form on a real C$^*$-algebra extends to an orthogonal form on its multiplier algebra (see Proposition \ref{prop extns to the multiplier}). It is also proved that, for each orthogonal and symmetric form $V$ on a real C$^*$-algebra, $A,$ there exists a functional $\phi\in A^*$ satisfying $V(a,b) =\phi (a b+ ba),$ for every $a,b\in A$ with $a=a^*$, $b^*=b$ (cf. Proposition \ref{p orthog forms on Asa}). In the real setting, the skew-symmetric part of a real C$^*$-algebra, $A,$ is not determined by the self-adjoint part of $A,$ so the information about the behavior of $V$ on the rest of $A$ is very limited.

Section 3 contains one of the main results of the paper: the characterisation of all orthogonal forms on a commutative real C$^*$-algebra. Concretely, we prove that a form $V$ on a commutative real C$^*$-algebra $A$ is orthogonal if, and only if, there exist functionals $\varphi_1$ and $\varphi_2$ in $A^{*}$ satisfying $$V(x,y) = \varphi_1 (x y) + \varphi_2 ( x y^*),$$ for every $x,y\in A$ (see Theorem \ref{t orhtogonal forms real abelian}). Among the consequences, it follows that the complex bilinear extension of $V$ to the complexification of $A$ is orthogonal if, and only if, we can take $\varphi_2=0$  in the above representation.

We recall that a mapping $T: A\to B$ between real or complex C$^*$-algebras is said to be \emph{orthogonality or disjointness preserving} (also called \emph{separating}) whenever $a\perp b$ in $A$ implies $T(a)\perp T(b)$ in $B.$  The mapping $T$ is \emph{bi-orthogonality preserving} whenever the equivalence $$ a\perp b \Leftrightarrow T(a) \perp T(b)$$ holds for all $a,b$ in $A$. As noticed in \cite{BurGarPe}, every bi-orthogonality preserving linear
surjection, $T:A\to B$ between two C$^*$-algebras is injective.

The study of orthogonality preserving operators between
C$^*$-algebras started with the work of W. Arendt \cite{Arendt} in the
setting of unital abelian C$^*$-algebras. Subsequent contributions by K. Jarosz \cite{Jar}
extended the study to the setting of orthogonality preserving (not
necessarily bounded) linear mappings between abelian
C$^*$-algebras. The first study on orthogonality preserving symmetric (bounded) linear operators between general (complex)
C$^*$-algebras is originally due to M. Wolff (cf. \cite{Wol}). Orthogonality preserving
bounded linear maps between C$^*$-algebras, JB$^*$-algebras and JB$^*$-triples were completely described in
\cite{BurFerGarMarPe} and \cite{BurFerGarPe}.

The pioneer works of E. Beckenstein, L. Narici, and A.R. Todd in
\cite{BeckNarTodd} and \cite{BeckNarTodd88} (see also
\cite{BeckNar}) were applied by K. Jarosz to prove that every
orthogonality preserving linear bijection between $C(K)$-spaces is
(automatically) continuous (see \cite{Jar}). More recently, M.
Burgos and the authors of this note proved in \cite{BurGarPe} that
every bi-orthogonality preserving linear surjection between two
von Neumann algebras (or between two compact C$^*$-algebras) is
automatically continuous (compare \cite{OikPeRa}, \cite{OikPePu}
for recent additional generalisations).

The main goal of section 4 is to describe the orthogonality preserving linear mappings between
unital commutative real C$^*$-algebras
(see Theorem \ref{t op between Cr(K)}). As a consequence, we shall prove that
 every orthogonality preserving linear bijection between unital commutative real
 C$^*$-algebras\hyphenation{al-gebras} is automatically continuous. We shall exhibit
 some examples illustrating that the results in the real
setting are completely independent from those established for
complex C$^*$-algebras. We further give a characterisation of those linear mappings between real forms of $C(K)$-spaces which are bi-orthogonality preserving.

\subsection{Preliminary results}

Let us now introduce some basic facts and definitions required later.
A \emph{real C$^*$-algebra} is a real Banach *-algebra $A$ which satisfies
the standard C$^*$-identity, $\|a^*a\|=\|a\|^2$, and which also has the property
that $1 + a^{\ast}a$ is invertible in the unitization of $A$ for every $a\in A$.
It is known that a real Banach *-algebra, $A,$ is a real C$^*$-algebra if, and only if, it is
isometrically *-isomorphic to a norm-closed real *-subalgebra of bounded
operators on a real Hilbert space (cf. \cite[Corollary 5.2.11]{Li}).

Clearly, every (complex) C$^*$-algebra is a real C$^*$-algebra when scalar multiplication is
restricted to the real field. If $A$ is a real C$^*$-algebra whose algebraic complexification
is denoted by ${B}= A\oplus i  A,$ then there exists a C$^*$-norm on ${B}$
extending the norm of $A$. It is further known that there exists an involutive conjugate-linear
$^*$-automorphism $\tau$ on ${B}$ such that $A = {B}^{\tau} :=\{ x\in B : \tau (x) =x\}$
(compare \cite[Proposition 5.1.3]{Li} or \cite[Lemma 4.1.13]{Rick}, and \cite[Corollary 15.4]{Goo}).
The dual space of a real or complex C$^*$-algebra $A$ will be denoted by $A^*$.
Let $\widetilde{\tau}:B^*\rightarrow B^*$ denote the map defined by
\begin{equation*}
\widetilde{\tau}(\phi)(b)=\overline{\phi(\tau(b))}\qquad(\phi \in B^*,\,b\in B).
\end{equation*} Then $\widetilde{\tau}$ is a conjugate-linear isometry of period 2 and the mapping
$$(B^*)^{\widetilde{\tau}} \to A^*$$ $$\varphi \mapsto \varphi|_{A}$$ is a surjective linear isometry.
We shall identify $(B^*)^{\widetilde{\tau}}$ and $A^*$ without making any explicit mention.

When $A$ is a real or complex C$^*$-algebra, then $A_{sa}$ and $A_{skew}$ will
stand for the set of all self-adjoint and skew-symmetric elements in $A$, respectively.
We shall make use of standard notation in C$^*$-algebra theory.

Given Banach spaces $X$ and $Y$, $L(X,Y)$ will denote the space of
all bounded linear mappings from $X$ to $Y$. We shall write $L(X)$
for the space $L(X,X)$. Throughout the paper the word
``operator'' (respectively, multilinear or sesquilinear operator) will always
mean bounded linear mapping (respectively bounded multilinear or sesquilinear mapping).
The dual space of a Banach space $X$ is always denoted by $X^*$.

Let us recall that a series $\sum_n x_n$ in a Banach space is called \emph{weakly unconditionally
Cauchy (w.u.C.)} if there exists $C>0$ such that for any finite subset
$F\subset{\mathbb{N}}$ and $\varepsilon_{n} = \pm 1$ we have $\displaystyle\left\| \sum_{n\in F}
\varepsilon_{n} x_{n} \right\| \leq C$. A (linear) operator
$T:X\longrightarrow Y$ is \emph{unconditionally converging} if for every w.u.C.
series $\sum_{n} x_{n}$ in $X,$ the series $\sum_n T(x_n)$ is unconditionally convergent in $Y$, that is,
every subseries of $\sum_{n} T(x_{n})$ is norm converging. It is known that $T:X\to Y$ is unconditionally converging if, and only if, for every w.u.C. series $\sum_{n} x_{n}$ in $X,$ we have $\|T(x_n)\| \to 0$ (compare, for example, \cite[page 1257]{PeViWriYli2})

Let us also recall that a Banach space $X$ is said to have \emph{Pe\l czy\'nski's}
\emph{property} (V) if, for every Banach space $Y$,
every unconditionally converging operator $T: X \to Y$ is weakly compact.

The proof of the following elementary lemma is left to the reader.

\begin{lemma}\label{l (V) real forms} Let $X$ be a complex Banach space, $\tau: X\to X$ a conjugate-linear
period-2 isometry. Then the real Banach space $X^{\tau}:=\{x\in X : \tau(x) = x\}$ satisfies property (V) whenever $X$ does. $\hfill\Box$
\end{lemma}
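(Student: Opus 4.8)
The plan is to realise $X^{\tau}$ as a norm-one complemented real subspace of $X$ and to transfer property (V) through a complexification of the target space. First I would record the decomposition coming from $\tau$: since $\tau$ is a conjugate-linear period-$2$ isometry, every $x\in X$ admits a unique decomposition $x=u+iv$ with $u,v\in X^{\tau}$, namely $u=\frac12(x+\tau(x))$ and $v=\frac{1}{2i}(x-\tau(x))$; one checks directly that $\tau(u)=u$ and $\tau(v)=v$. In particular the real-linear map $P(x)=\frac12(x+\tau(x))$ is a bounded projection of $X$ onto $X^{\tau}$ with $\|P\|\le 1$, and the coordinate maps $x\mapsto u$ and $x\mapsto v$ are bounded.

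To prove property (V) of $X^{\tau}$, I would fix an arbitrary real Banach space $Y$ and a real-linear unconditionally converging operator $S\colon X^{\tau}\to Y$, the goal being to show that $S$ is weakly compact. Let $Y_{\mathbb{C}}=Y\oplus iY$ be the complexification of $Y$, equipped with a norm that restricts to the given norm on $Y$ and satisfies $\|a+ib\|\le\|a\|+\|b\|$ for $a,b\in Y$ (for instance the Taylor norm). I would then define $\widetilde{S}\colon X\to Y_{\mathbb{C}}$ by $\widetilde{S}(u+iv)=S(u)+iS(v)$ for $u,v\in X^{\tau}$. Using the linearity of $S$ one verifies that $\widetilde{S}$ is complex-linear, and the boundedness of the coordinate maps gives that $\widetilde{S}$ is bounded; moreover $\widetilde{S}$ restricts to $S$ on $X^{\tau}$, since $\widetilde{S}(u)=S(u)\in Y$ for $u\in X^{\tau}$.

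The next step is to show that $\widetilde{S}$ is unconditionally converging, using the characterisation recalled in the preliminaries ($T$ is unconditionally converging if, and only if, $\|T(x_n)\|\to 0$ for every w.u.C. series $\sum_n x_n$). Given a w.u.C. series $\sum_n x_n$ in $X$ with bound $C$, I write $x_n=u_n+iv_n$ with $u_n,v_n\in X^{\tau}$. For any finite $F\subset\mathbb{N}$ and signs $\varepsilon_n=\pm1$, the identity $\sum_{n\in F}\varepsilon_n u_n=\frac12\sum_{n\in F}\varepsilon_n(x_n+\tau(x_n))$, together with $\tau(\sum_{n\in F}\varepsilon_n x_n)=\sum_{n\in F}\varepsilon_n\tau(x_n)$ (valid because $\tau$ is conjugate-linear and the $\varepsilon_n$ are real) and the fact that $\tau$ is an isometry, yields $\|\sum_{n\in F}\varepsilon_n u_n\|\le C$; the same argument gives $\|\sum_{n\in F}\varepsilon_n v_n\|\le C$. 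Hence $\sum_n u_n$ and $\sum_n v_n$ are w.u.C. series in $X^{\tau}$, so $\|S(u_n)\|\to 0$ and $\|S(v_n)\|\to 0$ by the hypothesis on $S$, whence $\|\widetilde{S}(x_n)\|\le\|S(u_n)\|+\|S(v_n)\|\to 0$. Thus $\widetilde{S}$ is unconditionally converging.

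Since $X$ has property (V) and $\widetilde{S}\colon X\to Y_{\mathbb{C}}$ is a complex-linear unconditionally converging operator into a complex Banach space, $\widetilde{S}$ is weakly compact. It then remains to descend weak compactness from $Y_{\mathbb{C}}$ to $Y$. As $S=\widetilde{S}|_{X^{\tau}}$ has range inside $Y$, the image under $S$ of the closed unit ball of $X^{\tau}$ is relatively weakly compact in $Y_{\mathbb{C}}$; applying the weak-to-weak continuous real-part projection $\mathrm{Re}\colon Y_{\mathbb{C}}\to Y$, which is the identity on $Y$, this image is seen to be relatively weakly compact in $Y$ as well. Therefore $S$ is weakly compact, and $X^{\tau}$ has property (V). I expect the only genuinely delicate point to be the interplay between the real and complex structures — concretely, choosing the complexification norm so that weak compactness transfers cleanly in both directions — whereas the preservation of the w.u.C. condition under $x\mapsto\frac12(x\pm\tau(x))$ is the computational core.
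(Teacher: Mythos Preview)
Your argument is correct. The paper does not actually prove this lemma: it is stated with a $\Box$ and prefaced by ``The proof of the following elementary lemma is left to the reader,'' so there is no proof to compare against. Your route---realising $X=X^{\tau}\oplus iX^{\tau}$, complexifying the target, extending $S$ to a complex-linear $\widetilde{S}$, checking that the projections $x\mapsto\frac12(x\pm\tau(x))$ preserve the w.u.C.\ condition, and then pulling weak compactness back along the real-part projection---is exactly the kind of elementary verification the authors had in mind, and every step is sound. The only point worth tidying is cosmetic: the transfer of weak compactness from $Y_{\mathbb{C}}$ to $Y$ does not really depend on the choice of complexification norm, since the real-weak topology on $Y$ coincides with the restriction of the weak topology of $Y_{\mathbb{C}}$ (the real-part projection is bounded real-linear, hence weak--weak continuous, regardless of which reasonable norm you put on $Y_{\mathbb{C}}$).
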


We shall require, for later use, some results on extensions of multilinear operators.
Let $X_1,\dots,X_n$, and $X$ be Banach spaces,
$T:X_1\times\cdots\times X_n\to X$ a (continuous) $n$-linear
operator, and $\pi:\{1,\dots,n\}\to \{1,\dots,n\}$ a permutation.
It is known that there exists a unique $n$-linear extension
$AB(T)_\pi:X_1^{**}\times\cdots\times X_n^{**}\to X^{**}$ such
that for every $z_i\in X_i^{**}$ and every net
$(x_{\alpha_i}^i)\in X_i$ ($1\leq i \leq n$), converging to $z_i$
in the weak* topology we have
$$AB(T)_\pi(z_1,\ldots,
z_n)=\mbox{weak*-}\lim_{\alpha_{\pi(1)}}\cdots
\mbox{weak*-}\lim_{\alpha_{\pi(n)}} T(x_{\alpha_1}^1,\ldots,
x_{\alpha_n}^n).$$

Moreover, $AB(T)_\pi$ is bounded and has the same norm as $T$. The
extensions $AB(T)_\pi$ coincide with those considered by Arens in \cite{Are1,Are2}
and by Aron and Berner for polynomials in \cite{ArBer}. The $n$-linear operators $AB(T)_\pi$ are
usually called the {\em Arens} or \emph{Aron-Berner extensions} of $T$.

Under some additional hypothesis, the Arens extension of a multilinear operator also is separately weak$^*$ continuous. Indeed, if every operator from $X_i$ to $X_j^*$ is weakly compact ($i\not
= j)$ the Arens extensions of $T$ defined above do not
depend on the chosen permutation $\pi$ and they are all separately weak$^*$ continuous
(see \cite{ArCoGa}, and Theorem 1 in \cite{BomVi}). In particular, the above requirements always hold
when every $X_i$ satisfies Pelczynski's property $(V)$ (in such case $X_i^*$ contains no copies
of $c_0$, therefore every operator from $X_i$ to $X_j^*$ is
unconditionally converging, and hence weakly compact by property $(V)$, see
\cite{Pel}). When all the Arens extensions of $T$ coincide,
the symbol $AB(T)=T^{**}$ will denote any of them.

We should note at this
point that every C$^*$-algebra satisfies property $(V)$ (cf.
Corollary 6 in \cite{Pfi}). Since every real C$^*$-algebra is, in particular,
a real form of a (complex) C$^*$-algebra, it follows from Lemma \ref{l (V) real forms}
that every real C$^*$-algebra satisifes property $(V)$. We therefore have:

\begin{lemma}\label{l extensions real forms} Let $A_1,\ldots, A_k$ be real C$^*$-algebras and let $T$
be a multilinear continuous operator from $A_1\times \ldots \times A_k$ to a real Banach space $X$. Then $T$ admits a
unique Arens extension $T^{**}:A_1^{**}\times \ldots \times A_k^{**}\to X^{**}$ which is separately weak$^*$ continuous.$\hfill\Box$
\end{lemma}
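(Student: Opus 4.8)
The plan is to reduce the real-coefficient statement to the complex case recorded just above, using the identification $A_j = (A_j)_{\mathbb{C}}^{\tau_j}$ and the transfer of property $(V)$ supplied by Lemma \ref{l (V) real forms}. First I would complexify each factor: write $B_j = (A_j)_{\mathbb{C}} = A_j \oplus i A_j$ with its canonical conjugate-linear period-2 $^*$-automorphism $\tau_j$ satisfying $A_j = B_j^{\tau_j}$, and let $Y = X_{\mathbb{C}} = X \oplus iX$ carry the conjugate-linear period-2 isometry $\sigma$ with $X = Y^{\sigma}$. The continuous $k$-linear operator $T : A_1 \times \cdots \times A_k \to X$ extends uniquely to a $\mathbb{C}$-multilinear operator $T_{\mathbb{C}} : B_1 \times \cdots \times B_k \to Y$ (expand multilinearly over $i$), and $T_{\mathbb{C}}$ is again bounded. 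By Lemma \ref{l (V) real forms} each $A_j$ has property $(V)$, hence (as remarked in the excerpt) the complex C$^*$-algebras $B_j$ have property $(V)$ as well — alternatively this is Pfitzner's theorem applied directly to $B_j$. In either case the hypotheses recalled before the lemma are met for the tuple $(B_1,\dots,B_k)$, so all Arens extensions of $T_{\mathbb{C}}$ coincide and the common extension $(T_{\mathbb{C}})^{**} : B_1^{**} \times \cdots \times B_k^{**} \to Y^{**}$ is separately weak$^*$ continuous.

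Next I would identify the biduals in the real picture. Using the isometric identification $(B_j^*)^{\widetilde{\tau_j}} \cong A_j^*$ recorded in the preliminaries, one gets a corresponding identification of the second duals: $A_j^{**}$ is (isometrically, weak$^*$-homeomorphically) the fixed-point space $(B_j^{**})^{\widehat{\tau_j}}$ of the bitranspose $\widehat{\tau_j} = (\widetilde{\tau_j})^*$, and likewise $X^{**} = (Y^{**})^{\widehat{\sigma}}$. Restricting $(T_{\mathbb{C}})^{**}$ to $A_1^{**} \times \cdots \times A_k^{**}$ then lands in $(Y^{**})^{\widehat{\sigma}} = X^{**}$: indeed $\widehat{\sigma}$ commutes with the (permutation-independent) Arens extension because $\sigma$ is a morphism intertwining $T_{\mathbb{C}}$ with its $\tau$-conjugate, and the fixed points go to fixed points. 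Call this restriction $T^{**}$. Since each coordinate weak$^*$ limit can be taken along nets in $A_j$ (which is weak$^*$ dense in $A_j^{**}$) and $T_{\mathbb{C}}$ agrees with $T$ on the real factors, $T^{**}$ extends $T$; separate weak$^*$ continuity is inherited from $(T_{\mathbb{C}})^{**}$ because the weak$^*$ topology of $A_j^{**}$ is the restriction of that of $B_j^{**}$.

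Finally, uniqueness: any separately weak$^*$-continuous $k$-linear extension of $T$ is forced to agree with an iterated weak$^*$-limit formula, and since all such limits coincide (they do already at the level of $B_j$, hence a fortiori on the closed subspaces $A_j^{**}$), the extension is unique. I expect the only genuinely delicate point to be the bookkeeping in the second paragraph — checking that the bitranspose of $\widetilde{\tau_j}$ really does cut out $A_j^{**}$ inside $B_j^{**}$ as a weak$^*$-closed real-linear subspace, and that the Arens extension respects these fixed-point decompositions; everything else is a routine transfer from the complex statement quoted in the excerpt together with Lemma \ref{l (V) real forms}.
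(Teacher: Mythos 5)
Your argument is correct in substance, but it follows a genuinely different route from the one the paper intends. The paper's proof is the immediate one signalled by the $\Box$: the general criterion recalled in the preliminaries (Aron--Cole--Gamelin, Bombal--Villanueva) is applied \emph{directly} to the real Banach spaces $A_1,\dots,A_k$ --- each $A_i$ has Pe\l czy\'nski's property (V) by Pfitzner's theorem combined with Lemma \ref{l (V) real forms}, hence $A_i^*$ contains no copy of $c_0$, every operator $A_i\to A_j^*$ is unconditionally converging and therefore weakly compact, and the criterion yields a single, separately weak$^*$ continuous Arens extension with no further work. You instead complexify $T$ to $T_{\mathbb{C}}:B_1\times\cdots\times B_k\to X_{\mathbb{C}}$, invoke the complex case, and restrict to the fixed-point subspaces $A_j^{**}\cong (B_j^{**})^{\tau_j^{**}}$. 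This is a legitimate alternative: it has the merit of only ever using the multilinear extension theory over $\mathbb{C}$ (where the cited references are usually stated), but it costs you exactly the bookkeeping you flag --- the identification of $A_j^{**}$ with a weak$^*$-closed real subspace of $B_j^{**}$ carrying the restricted weak$^*$ topology, and the verification that $(T_{\mathbb{C}})^{**}$ maps the real parts into $X^{**}$ (for which the cleanest argument is separate weak$^*$ continuity plus Goldstine, rather than the intertwining you sketch). The paper's route avoids all of this because the Arens construction and the weak-compactness criterion are field-agnostic; if you are willing to accept that, your second paragraph becomes unnecessary.
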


Given a real or complex C$^*$-algebra, $A$, the
\emph{multiplier algebra} of $A$, $M(A)$, is the set of all
elements $x\in A^{**}$ such that, for each element $a\in A$, $x
a$ and $a x$ both lie in $A$. We notice that $M(A)$ is a C$^*$-algebra
and contains the unit element of $A^{**}$. It should be recalled here that
$A= M(A)$ whenever $A$ is unital.

\begin{proposition}\label{prop extns to the multiplier} Let $A$ be a real C$^*$-algebra.
Suppose that $V: A\times A\to \mathbb{R}$ is an orthogonal bounded bilinear form.
Then the continuous bilinear form $$\tilde{V} : M(A)\times M(A) \to \mathbb{R}, \ \ \tilde{V} (a,b)
:= V^{**} (a,b)$$ is orthogonal.
\end{proposition}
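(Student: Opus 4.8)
The plan is to realise $\tilde V$ as the restriction of $V^{**}$ and then, for each orthogonal pair in $M(A)$, to manufacture \emph{orthogonal} approximating nets that live in $A$, where the orthogonality of $V$ can finally be used. First I would note that, by Lemma~\ref{l extensions real forms}, $V^{**}\colon A^{**}\times A^{**}\to\mathbb{R}$ is a bounded, separately weak$^{*}$-continuous bilinear form extending $V$; since $M(A)\subseteq A^{**}$, the form $\tilde V=V^{**}|_{M(A)\times M(A)}$ is well defined, bounded, and extends $V$, so the only thing to check is orthogonality. Fix $a,b\in M(A)$ with $a\perp b$, i.e. $ab^{*}=b^{*}a=0$; passing to adjoints also gives $ba^{*}=a^{*}b=0$. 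The goal is to prove $V^{**}(a,b^{*})=0$.

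The decisive point is the choice of the approximate units. Since $A$ is a (closed, two-sided) ideal of $M(A)$, the sets $B_{a}:=\overline{(a^{*}a)\,A\,(a^{*}a)}$ and $B_{b}:=\overline{(b^{*}b)\,A\,(b^{*}b)}$ are hereditary $C^{*}$-subalgebras of $A$, whose support projections in $A^{**}$ are the range projections $p:=r(a^{*}a)$ and $q:=r(b^{*}b)$. Let $(e_{\lambda})$ and $(g_{\sigma})$ be approximate units of $B_{a}$ and $B_{b}$; then $e_{\lambda}\to p$ and $g_{\sigma}\to q$ weak$^{*}$ in $A^{**}$, and $a e_{\lambda}, b g_{\sigma}\in A$. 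As $p$ is the right support of $a$ and $q$ that of $b$, one has $ap=a$, $bq=b$, so separate weak$^{*}$-continuity of the product in $A^{**}$ yields $a e_{\lambda}\to a$ and $b g_{\sigma}\to b$ weak$^{*}$. Now for the orthogonality of these approximants: each $e_{\lambda}$ is a norm-limit of elements $(a^{*}a)x(a^{*}a)$ with $x\in A$, and $b\,(a^{*}a)x(a^{*}a)=(ba^{*})\bigl(a x(a^{*}a)\bigr)=0$ because $ba^{*}=0$; hence $b e_{\lambda}=0$, and taking adjoints $e_{\lambda}b^{*}=0$. Consequently $(a e_{\lambda})b^{*}=a(e_{\lambda}b^{*})=0$ and $b^{*}(a e_{\lambda})=(b^{*}a)e_{\lambda}=0$. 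Applying the same computation to $B_{b}$ and the element $a e_{\lambda}$ (now using $(a e_{\lambda})b^{*}=0$ and $b^{*}(a e_{\lambda})=0$) gives $(a e_{\lambda})g_{\sigma}=0$, whence $(a e_{\lambda})(b g_{\sigma})^{*}=(a e_{\lambda})g_{\sigma}b^{*}=0$ and $(b g_{\sigma})^{*}(a e_{\lambda})=g_{\sigma}\bigl(b^{*}(a e_{\lambda})\bigr)=0$. Thus $a e_{\lambda}\perp b g_{\sigma}$ in $A$ for all $\lambda,\sigma$.

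To conclude I would apply separate weak$^{*}$-continuity of $V^{**}$ twice: $V^{**}(a,b^{*})=\lim_{\lambda}V^{**}(a e_{\lambda},b^{*})=\lim_{\lambda}\lim_{\sigma}V^{**}\bigl(a e_{\lambda},(b g_{\sigma})^{*}\bigr)$, using that $(b g_{\sigma})^{*}=g_{\sigma}b^{*}\to q b^{*}=b^{*}$ weak$^{*}$ (the left support of $b^{*}$ is $q$). Since both entries now lie in $A$ and $V^{**}$ extends $V$, the inner term equals $V\bigl(a e_{\lambda},(b g_{\sigma})^{*}\bigr)$, which is $0$ by orthogonality of $V$ because $a e_{\lambda}\perp b g_{\sigma}$. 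Hence $V^{**}(a,b^{*})=0$, so $\tilde V$ is orthogonal. The only real obstacle is the one handled in the middle paragraph: inserting an arbitrary approximate unit of $A$ into the products $ab^{*}$ and $b^{*}a$ destroys orthogonality, so one cannot approximate $a$ and $b$ by multiplying with an arbitrary approximate unit of $A$; the remedy is to multiply by approximate units of $\overline{(a^{*}a)A(a^{*}a)}$ and $\overline{(b^{*}b)A(b^{*}b)}$, which are automatically annihilated by $b$ and by $a$ because $ba^{*}=a^{*}b=0$, and this is exactly what keeps the approximants orthogonal.
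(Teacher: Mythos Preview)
Your argument is correct. Both proofs follow the same overall scheme---produce, for a given orthogonal pair $a,b\in M(A)$, nets of \emph{orthogonal} approximants in $A$ that converge weak$^{*}$ to $a$ and $b^{*}$, and then invoke the separate weak$^{*}$-continuity of $V^{**}$---but the mechanism for manufacturing the approximants is different. The paper uses the ``cube root'' factorisation $a=a^{[1/3]}(a^{[1/3]})^{*}a^{[1/3]}$: since $a^{[1/3]}\perp b^{[1/3]}$, the elements $a^{[1/3]}x\,a^{[1/3]}$ and $b^{[1/3]}y\,b^{[1/3]}$ are orthogonal in $A$ for every $x,y\in A$, and one then lets $x\to (a^{[1/3]})^{*}$, $y\to b^{[1/3]}$ by Goldstine. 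Your route instead multiplies $a$ and $b$ on the right by approximate units of the subalgebras $\overline{(a^{*}a)A(a^{*}a)}$ and $\overline{(b^{*}b)A(b^{*}b)}$, which are annihilated by $b$ and $a$ respectively because $ba^{*}=a^{*}b=0$; this yields the same conclusion. The paper's device is more compact (one line of functional calculus plus Goldstine), while yours makes the underlying support-projection picture explicit and avoids the cube-root construction.

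One small remark: the word ``hereditary'' is not actually used anywhere in your argument and is not entirely obvious when $a^{*}a\in M(A)\setminus A$. All you need is that $B_{a}=\overline{(a^{*}a)A(a^{*}a)}$ is a C$^{*}$-subalgebra of $A$ (clear), that an approximate unit $(e_{\lambda})$ of $B_{a}$ converges weak$^{*}$ to a projection $p$ with $ap=a$, and that $be_{\lambda}=0$. The middle point follows because each $e_{\lambda}\le p:=r(a^{*}a)$ (as $p\,(a^{*}a)x(a^{*}a)\,p=(a^{*}a)x(a^{*}a)$), while $e_{\lambda}(a^{*}a)x(a^{*}a)\to(a^{*}a)x(a^{*}a)$ in norm forces the weak$^{*}$ limit to dominate $r((a^{*}a)^{2})=p$; hence $e_{\lambda}\to p$. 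With that clarification your proof stands.
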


\begin{proof} Let $a$ and $b$ be two orthogonal elements in $M(A)$. Let $a^{[\frac13]}$ (resp.,
$b^{[\frac13]}$) denote the unique element $z$ in $M(A)$ satisfying $z z^* z =a$ (resp., $z z^* z =b$).
We notice that $a^{[\frac13]}$ and $b^{[\frac13]}$ are orthogonal, so, for each pair $x,y$
in $A$, $a^{[\frac13]} x a^{[\frac13]}$ and $b^{[\frac13]} y
b^{[\frac13]}$ are orthogonal elements in $A$. Since $V$ is orthogonal, we have
$$V(a^{[\frac13]} x a^{[\frac13]}, (b^{[\frac13]})^* y (b^{[\frac13]})^*) =0$$ for every $x,y \in A$.\smallskip

Goldstine's theorem (cf. Theorem V.4.2.5 in \cite{DunSchw})
guarantees that the closed unit ball of $A$ is weak*-dense in
the closed unit ball of $A^{**}$. Therefore we can pick two
bounded nets $(x_{\lambda})$ and $(y_{\mu})$ in $A$,
converging in the weak$^*$ topology of $A^{**}$ to $(a^{[\frac13]})^*$ and
$b^{[\frac13]}$, respectively.\smallskip

We have already mentioned that $V^{**}: A^{**}\times A^{**} \to \mathbb{R}$
is separately weak$^*$ continuous. Since $0= V(a^{[\frac13]} x_{\lambda} a^{[\frac13]}, (b^{[\frac13]})^* y_{\mu} (b^{[\frac13]})^*),$
for every $\lambda$ and $\mu$, taking limits, first in $\lambda$ and subsequently in $\mu$, we deduce that
$$V^{**}(a^{[\frac13]} (a^{[\frac13]})^* a^{[\frac13]}, (b^{[\frac13]})^* b^{[\frac13]} (b^{[\frac13]})^*) ) = \tilde{V}(a,b^*) =0,$$
which shows that $\tilde{V}$ is orthogonal.
\end{proof}

Since the multiplier algebra of a real or complex C$^*$-algebra always has a unit element,
Proposition \ref{prop extns to the multiplier} allows us to restrict our study on orthogonal
bilinear forms on a real C$^*$-algebra $A$ to the case in which $A$ is unital.\smallskip

A \emph{real von Neumann algebra} is a real C$^*$-algebra which is also a dual Banach space
(cf. \cite{IsRo} or \cite[\S 6.1]{Li}). Clearly, the self adjoint part of a real von Neumann algebra is a
JW-algebra in the terminology employed in \cite{HoS}, so every self-adjoint element in a real von Neumann algebra $W$
can be approximated in norm by a finite real linear combination of mutually orthogonal projections in $W$
(cf. \cite[Proposition 4.2.3]{HoS}). We shall explore now the validity in the real setting of
some of the results established by S. Goldstein in \cite{Gold}.

\begin{lemma}\label{l orthog forms on Asa}
Let $A$ be a real von Neumann algebra with unit $1$.
Suppose that $V: A\times A\to \mathbb{R}$ is a bounded bilinear form. The following are equivalent:
\begin{enumerate}[$(a)$] \item $V$ is orthogonal on $A_{sa}$;
\item $V(p,q) = 0$, whenever $p$ and $q$ are two orthogonal projections in $A$;
\item $V(a,b) = V(a b, 1)$ for every $a,b\in A_{sa}$ with $a b= b a$.
\end{enumerate}
If any of the above statements holds and $V$ is symmetric,
then defining $\phi_1 (x):=V(x,1)$ ($x\in A$),
we have $V(a,b) = \phi_1 (\frac{a b + ba}{2})$, for every $a,b\in A_{sa}$.
\end{lemma}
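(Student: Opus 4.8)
The proof divides naturally into establishing the cyclic equivalences $(a)\Leftrightarrow(b)\Leftrightarrow(c)$, followed by the final formula under the additional symmetry hypothesis. First I would prove $(a)\Rightarrow(b)$: two orthogonal projections $p,q$ satisfy $pq=qp=0$, hence $p\perp q$ in the C$^*$-algebra sense (recall $p=p^*$, $q=q^*$), so $V(p,q)=V(p,q^*)=0$ by orthogonality on $A_{sa}$. For $(b)\Rightarrow(c)$, I would use the spectral-type structure of a real von Neumann algebra recalled just above the lemma: every self-adjoint element is a norm-limit of finite real linear combinations of mutually orthogonal projections. Fix commuting $a,b\in A_{sa}$; by working in the (abelian) real von Neumann subalgebra they generate together with $1$, approximate $a=\sum_i \lambda_i p_i$ and $b=\sum_j \mu_j q_j$ with the $p_i$ mutually orthogonal, the $q_j$ mutually orthogonal, and all of them commuting. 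Bilinearity gives $V(a,b)=\sum_{i,j}\lambda_i\mu_j V(p_i,q_j)$. The key point is that for $i\neq j'$... more precisely, one refines the two partitions to a common family of mutually orthogonal projections $\{r_k\}$ summing to $1$, so that $a=\sum_k \alpha_k r_k$, $b=\sum_k \beta_k r_k$; then $V(r_k,r_l)=0$ for $k\neq l$ by $(b)$, whence $V(a,b)=\sum_k \alpha_k\beta_k V(r_k,r_k)$. On the other hand $ab=\sum_k \alpha_k\beta_k r_k$, and $V(r_k,1)=V(r_k,\sum_l r_l)=\sum_l V(r_k,r_l)=V(r_k,r_k)$ again by $(b)$, so $V(ab,1)=\sum_k\alpha_k\beta_k V(r_k,r_k)=V(a,b)$. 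Passing to the norm limit (using continuity of $V$ and the fact that the approximations of $a,b$ can be taken commuting, hence the approximation of $ab$ is controlled) yields $(c)$ in general.

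For $(c)\Rightarrow(a)$: if $a\perp b$ with $a,b\in A_{sa}$, then $ab=ba=0$, so by $(c)$, $V(a,b)=V(ab,1)=V(0,1)=0$, which is exactly orthogonality of $V$ on $A_{sa}$. This closes the loop.

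Finally, assume one of $(a)$–$(c)$ holds and $V$ is symmetric. For arbitrary $a,b\in A_{sa}$ (no longer assumed to commute), the elements $a+b$ and $a-b$ are self-adjoint, and so are $a,b$ themselves; but $a$ and $b$ need not commute, so $(c)$ does not apply directly. Instead I would apply $(c)$ to each self-adjoint element with itself — since $x$ commutes with $x$ — obtaining $V(x,x)=V(x^2,1)=\phi_1(x^2)$ for every $x\in A_{sa}$. Now polarise: expand $V(a+b,a+b)$ and $V(a-b,a-b)$ using bilinearity and symmetry ($V(a,b)=V(b,a)$); subtracting gives $4V(a,b)=V(a+b,a+b)-V(a-b,a-b)=\phi_1((a+b)^2)-\phi_1((a-b)^2)=\phi_1(2ab+2ba)$, hence $V(a,b)=\phi_1\!\left(\frac{ab+ba}{2}\right)$, as claimed. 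Note $\frac{ab+ba}{2}\in A_{sa}$, so the right-hand side makes sense and $\phi_1$ is only evaluated on self-adjoint elements.

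The main obstacle is the approximation argument in $(b)\Rightarrow(c)$: one must ensure the spectral approximations of the two commuting self-adjoint elements can be chosen so that they still commute with each other (so that a common refinement into mutually orthogonal projections exists) and so that the product is simultaneously well approximated. This is handled by passing to the abelian real von Neumann subalgebra generated by $a$, $b$ and $1$ — which is a real von Neumann algebra, hence its self-adjoint part is a JW-algebra in which the cited Proposition 4.2.3 of \cite{HoS} applies — and approximating inside it, where everything commutes automatically and products behave continuously. Everything else is routine bilinear algebra plus the continuity of $V$.
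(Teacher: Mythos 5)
Your proposal is correct and follows essentially the same route as the paper: the equivalence of $(a)$--$(c)$ via spectral resolutions into mutually orthogonal projections and common refinements is exactly the Goldstein-type argument the paper invokes, and the final formula is obtained, as in the paper, by first establishing $V(x,x)=V(x^2,1)$ for $x\in A_{sa}$ and then polarising using the symmetry of $V$. The only cosmetic difference is that the paper rederives $V(a,a)=V(a^2,1)$ directly from orthogonality on algebraic elements rather than quoting $(c)$ with $a=b$, which changes nothing of substance.
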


\begin{proof}
Applying the existence of spectral resolutions for self-adjoint elements in a real
von Neumann algebra, the argument given by S. Goldstein in \cite[Proposition 1.2]{Gold} remains valid
to prove the equivalence of $(a)$, $(b)$ and $(c)$.\smallskip

Suppose now that $V$ is symmetric. Let $a= \sum_{j=1}^{m} \lambda_j p_j$ be an algebraic element in $A_{sa}$,
where the $\lambda_j$'s belong to $\mathbb{R}$ and $p_1,\ldots, p_m$ are mutually orthogonal projections in $A$.
Since $V$ is orthogonal, for every projection $p\in A$, we have $$V(p,1) = V (p, 1-p) +V(p,p)= V(p,p).$$ Thus,
$$V(a,a) = \sum_{j=1}^{m} \lambda_j^2 V(p_j,p_j) = \sum_{j=1}^{m} \lambda_j^2 V(p_j,1) = V\left(  \sum_{j=1}^{m} \lambda_j^2 p_j,1\right) = V(a^2,1).$$
The (norm) density of algebraic elements in $A_{sa}$ and the continuity of $V$ imply that $V(a,a) = V(a^2,1),$ for every $a\in A_{sa}$.
Finally, applying that $V$ is symmetric we have $$V(a^2,1) + V(b^2,1) + V(ab + ba,1) = V((a+b)^2,1) $$
$$= V(a+b,a+b) = V(a,a)+V(b,b) + 2V(a,b),$$ for every $a,b\in A_{sa}$, and hence $V(a,b) = V(\frac{a b +ba}{2}, 1),$ for all $a,b\in A_{sa}$.
\end{proof}

The above result holds for every monotone $\sigma$-complete unital real C$^*$-algebra $A$
(that is, each upper bounded, monotone increasing
sequence of selfadjoint elements of $A$ has a least upper bound).\smallskip

Surprisingly, the final conclusion of the above Lemma can be established for unital
real C$^*$-algebras with independent basic techniques.

\begin{proposition}\label{p orthog forms on Asa}
Let $A$ be a unital real C$^*$-algebra with unit $1$.
Suppose that $V: A\times A\to \mathbb{R}$ is an orthogonal, symmetric, bounded, bilinear form.
Then defining $\phi_1 (x):=V(x,1)$ ($x\in A$), we have $V(a,b) = \phi_1 (\frac{a b+ ba}{2})$,
for every $a,b\in A_{sa}$.
\end{proposition}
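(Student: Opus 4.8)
The plan is to reduce the statement, by polarisation, to the single identity $V(a,a)=V(a^2,1)$ for $a\in A_{sa}$: once this is known, applying it to $a+b$ and expanding the left‑hand side yields $V(a,b)+V(b,a)=V(ab+ba,1)=\phi_1(ab+ba)$, and the symmetry of $V$ then gives $V(a,b)=\phi_1(\tfrac{ab+ba}{2})$ (this is the only place where the symmetry of $V$ is needed). To prove $V(a,a)=V(a^2,1)$ I would pass to the bidual: $A^{**}$ is a unital real von Neumann algebra, and by Lemma \ref{l extensions real forms} the form $V$ admits a unique Arens extension $V^{**}\colon A^{**}\times A^{**}\to\mathbb{R}$, which is separately weak$^*$ continuous and, being bounded with $\|V^{**}\|=\|V\|$, also norm continuous. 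Fix $a\in A_{sa}$; rescaling, we may assume $\sigma(a)\subseteq(-1,1)$.

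For $N\in\mathbb{N}$ set $t_j=-1+2j/N$ ($0\le j\le N$) and, using the Borel functional calculus of $a$ inside $A^{**}$, form the pairwise orthogonal spectral projections $r_j=\chi_{\{t_j\}}(a)$ and $q_j=\chi_{(t_{j-1},t_j)}(a)$; since $\sigma(a)\subseteq(-1,1)$, their sum equals $\chi_{[-1,1]}(a)=1$. Enumerate this family as $e_1,\dots,e_m$, let $\tau_k$ be the sample point attached to $e_k$, and put $s_N=\sum_k\tau_k e_k\in (A^{**})_{sa}$; an elementary estimate gives $\|a-s_N\|\le 2/N$, hence $\|a^2-s_N^{\,2}\|\le 4/N$. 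The heart of the matter is the claim that $V^{**}(e_k,e_l)=0$ whenever $k\neq l$. To prove it, observe that each $e_k$ is a monotone weak$^*$‑limit of a sequence $f(a)$ with $f$ a continuous real function on $\sigma(a)$, $0\le f\le 1$, and that these $f$ may be taken supported inside any prescribed open neighbourhood of the Borel set defining $e_k$. As the Borel sets attached to $e_k$ and $e_l$ are disjoint, one can arrange, along cofinal subsequences, that $f_n(a)\to e_k$ and $g_p(a)\to e_l$ with $f_n$ and $g_p$ of disjoint support; then $f_n(a)\perp g_p(a)$ are self‑adjoint elements of $A$, so $V(f_n(a),g_p(a))=0$, and forming the iterated weak$^*$ limit (first in $p$, then in $n$) via the separate weak$^*$ continuity of $V^{**}$ gives $V^{**}(e_k,e_l)=0$. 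Granting this, $\sum_k e_k=1$ forces $V^{**}(e_k,1)=\sum_l V^{**}(e_k,e_l)=V^{**}(e_k,e_k)$, all cross terms drop out, and
$$V^{**}(s_N,s_N)=\sum_k\tau_k^{\,2}\,V^{**}(e_k,e_k)=\sum_k\tau_k^{\,2}\,V^{**}(e_k,1)=V^{**}(s_N^{\,2},1).$$
Letting $N\to\infty$ and using the norm continuity of $V^{**}$ yields $V(a,a)=V^{**}(a,a)=V^{**}(a^2,1)=V(a^2,1)$, which, together with the polarisation step above, proves the proposition.

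The step I expect to be the main obstacle is the disjoint‑support approximation in the displayed claim when two of the Borel sets share a boundary point — for instance $q_j=\chi_{(t_{j-1},t_j)}(a)$ and the atom $r_j=\chi_{\{t_j\}}(a)$ — since the naïve approximants then overlap near $t_j$. The remedy is to take $f_n$ supported in $(t_{j-1}+2^{-n-1},\,t_j-2^{-n-1})$ and $g_p$ supported in $(t_j-2^{-p},\,t_j+2^{-p})$, so that $f_n$ and $g_p$ have disjoint supports as soon as $p\ge n+2$, after which the iterated limit goes through. The remaining configurations (two atoms at distinct points; two interval projections, adjacent or not) are treated the same way and are in fact easier, since there the approximants can be chosen of disjoint support from the outset. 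Everything else — the description of spectral projections as monotone weak$^*$ limits in a von Neumann algebra, the separate weak$^*$ continuity of the Arens extension supplied by Lemma \ref{l extensions real forms}, and the norm estimates for $\|a-s_N\|$ and $\|a^2-s_N^{\,2}\|$ — is routine.
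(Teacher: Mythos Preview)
Your argument is correct, and the polarisation reduction to $V(a,a)=V(a^2,1)$ is exactly what the paper does. The difference lies in how this identity is established. The paper stays inside $A$: it observes that the real C$^*$-subalgebra generated by $1$ and $a$ is $C(K,\mathbb{R})$ for some compact $K$, so that $x\mapsto V(x,x)$ is a $2$-homogeneous orthogonally additive polynomial there, and then invokes the P\'erez--Villanueva theorem to obtain a functional $\varphi_a$ with $V(x,x)=\varphi_a(x^2)$; since $\varphi_a(x)=V(x,1)$ on that subalgebra, $V(a,a)=V(a^2,1)$ follows in one line. Your route instead passes to $A^{**}$ and carries out the spectral-resolution argument of Lemma~\ref{l orthog forms on Asa} for $V^{**}$, with the additional (and necessary) work of showing $V^{**}(e_k,e_l)=0$ on the spectral projections by approximating them with orthogonal continuous functions of $a$ in $A$ and using the separate weak$^*$ continuity of the Arens extension---since orthogonality of $V^{**}$ on $A^{**}$ is not available at this point of the paper. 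The trade-off is clear: the paper's proof is shorter but imports an external result on orthogonally additive polynomials, whereas yours is self-contained and shows directly that the von Neumann algebra technique of Lemma~\ref{l orthog forms on Asa} can be pushed through to general unital real C$^*$-algebras via the bidual; your handling of the boundary-point issue (adjacent interval/atom projections) with the iterated limit is the right fix.
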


\begin{proof} Let $a$ be a selfadjoint element in $A$. The real C$^*$-subalgebra, $C$, of $A$ generated by
$1$ and $a$ is isometrically isomorphic to the space $C(K,\mathbb{R})$ of all real-valued continuous
functions on a compact Hausdorff space $K$. The restriction of $V$ to $C\times C$ is orthogonal,
therefore the mapping  $x\mapsto V(x,x)$ is a $2$-homogeneous orthogonally additive polynomial on $C$.
The main result in \cite{PerezVi} implies the existence of a functional $\varphi_{a}\in C^*$ such that
$V(x,x) = \varphi_{a} (x^2)$, for every $x\in C$. It is clear that $\varphi_a (x) = V(x,1)$ for every $x\in C$.
In particular $$V(a,a) = \varphi_a (a^2) = V(a^2,1).$$ The argument given at the end of the proof of
Lemma \ref{l orthog forms on Asa} gives the desired statement.
\end{proof}

The above proposition shows that we can control the form of a symmetric orthogonal form on
the self adjoint part of a (unital) real C$^*$-algebra. The form on the skew-symmetric part remains out of control for the moment.

\section{Orthogonal forms on abelian real C$^*$-algebras}

Throughout this section, $A$ will denote a unital, abelian, real C$^*$-algebra whose
complexification will be denoted by $B$. It is clear that $B$ is a unital,
abelian C$^*$-algebra. It is known that there exists a period-2 conjugate-linear
$^*$-automorphism $\tau : B\to B$ such that $A = B^{\tau} := \{ x\in B : \tau (x) =x\}$
(cf. \cite[4.1.13]{Rick} and \cite[15.4]{Goo} or \cite[\S 5.2]{Li}).\smallskip

By the commutative Gelfand theory, there exists a compact Hausdorff space $K$ such that $B$ is
C$^*$-isomorphic to the C$^*$-algebra $C(K)$ of all complex valued continuous functions on $K$.
The Banach-Stone Theorem implies the existence of a homeomorphism $\sigma: K \to K$
such that $\sigma^2 (t) = t$, and $$ \tau (a) (t) = \overline{a (\sigma (t))},$$ for all $t\in
K$, $a\in C (K)$. Real function algebras of the form $C(K)^{\tau}$ have been studied by its own right
and are interesting in some other settings (cf. \cite{KulLim}).\smallskip

Henceforth, the symbol $\mathfrak{B}$ will stand for the
$\sigma$-algebra of all Borel subsets of $K$, $S(K)$ will denote
the space of $\mathfrak{B}$-simple scalar functions defined on $K$,
while the \emph{Borel algebra over $K$}, $B(K)$, is defined as the
completion of $S(K)$ under the supremum norm. It is known that
$B=C(K) \subset B(K) \subset C(K)^{**}.$ The mapping $\tau^{**}:
C(K)^{**} \to C(K)^{**}$ is a period-2 conjugate-linear
$^*$-automorphism on $B^{**}= C(K)^{**}.$ It is easy to see that
$\tau^{**} (B(K)) = B(K),$ and hence $\tau^{**}|_{B(K)} : B(K) \to
B(K)$ defines a period-2 conjugate-linear $^*$-automorphism on
$B(K)$.
By an abuse of notation, the symbol $\tau$ will denote
$\tau$, $\tau^{**}$ and $\tau^{**}|_{B(K)}$ indistinctly.
It is clear that, for each Borel set $B\in \mathfrak{B},$
$\tau (\chi_{_{B}}) = \chi_{_{\sigma(B)}}$.

Let $a$ be an element in $B(K)$. For each $\varepsilon>0,$
there exist complex numbers $\lambda_1,\ldots,\lambda_r$ and disjoint
Borel sets $B_1,\ldots,B_r$ such that $\displaystyle{\left\|a-\sum_{k=1}^r \lambda_k
 \chi_{_{B_k}} \right\|<{\varepsilon}.}$ When $a\in A$ is $\tau$-symmetric (i.e. $\tau(a)=a$)
 then, since $a=\frac{1}{2}(a+\tau(a)),$ we have $$\left\|a-\frac12 \sum_{k=1}^r \lambda_k
 \chi_{_{B_k}} + \overline{\lambda_k}
 \chi_{_{\sigma(B_k)}}\right\|\leq \frac12 \left\|a-\sum_{k=1}^r \lambda_k
 \chi_{_{B_k}} \right\| + \frac12 \left\|a-\sum_{k=1}^r \overline{\lambda_k}
 \chi_{_{\sigma(B_k)}} \right\|$$ $$\leq \frac12 \left\|a-\sum_{k=1}^r \lambda_k
 \chi_{_{B_k}} \right\| + \frac12 \left\|\tau\left(a-\sum_{k=1}^r \lambda_k
 \chi_{_{B_k}}\right) \right\| <\varepsilon.$$
Consequently, every element in $B(K)^\tau$ can be approximated in norm by finite linear combinations
of the form $\sum_k \alpha_k \chi_{_{B_k}}+\overline{\alpha_k} \chi_{_{\sigma(B_k)}},$ where $\alpha_1,\ldots,\alpha_n$ are complex numbers and $B_1,\ldots,B_n$ are mutually disjoint Borel sets.
Having in mind that, for each Borel set $B\in \mathfrak{B}$ and each $\alpha\in \mathbb{C},$ $\left(\alpha \chi_{_{B}}+\overline{\alpha} \chi_{_{\sigma(B)}} \right)^*= \overline{\alpha} \chi_{_{B}}+ {\alpha} \chi_{_{\sigma(B)}},$ we have $$\left(\alpha \chi_{_{B}}+\overline{\alpha} \chi_{_{\sigma(B)}} \right) + \left(\alpha \chi_{_{B}}+\overline{\alpha} \chi_{_{\sigma(B)}} \right)^* = 2 \Re\hbox{e} (\alpha) \left(2 \chi_{_{\sigma(B)\cap B}} + \chi_{_{\sigma(B)\backslash B}} +\chi_{_{B\backslash \sigma(B)}}\right)$$
$$= 2 \Re\hbox{e} (\alpha) \left(2 \chi_{_{\sigma(B)\cap B}} + \chi_{_{(\sigma(B)\backslash B)\cup \sigma(\sigma(B)\backslash B)}} \right),$$ and $$\left(\alpha \chi_{_{B}}+\overline{\alpha} \chi_{_{\sigma(B)}} \right) - \left(\alpha \chi_{_{B}}+\overline{\alpha} \chi_{_{\sigma(B)}} \right)^* = 2  i  \Im\hbox{m} (\alpha) \left( \chi_{_{B\backslash \sigma(B)}} - \chi_{_{\sigma(B)\backslash B}}  \right).$$

Suppose now that $a\in B(K)^\tau$ is *-symmetric (i.e. $a^*=a$). It follows from the above that $a$ can be approximated in norm by linear combinations of the form $\displaystyle{\sum_{k=1}^{r} \alpha_k \chi_{_{E_k}},}$ where $\alpha_k \in \mathbb{R}$ and $E_1,\ldots, E_r$ are mutually disjoint Borel subsets of $K$ with $\sigma(E_i) = E_i$. Let $b$ be an element in $B(K)^\tau$ satisfying $b^*=-b$. Similar arguments to those given for *-symmetric elements, allow us to show that $b$ can be approximated in norm by finite linear combinations of the form $\displaystyle{ \sum_{k=1}^{r}  i  \ \alpha_k (\chi_{_{E_k}}-\chi_{_{\sigma(E_k)}}),}$ where $\alpha_k \in \mathbb{R}$ and $E_1,\ldots, E_r$ are mutually disjoint Borel subsets of $K$ with $\sigma(E_i) \cap E_i = \emptyset$.

\begin{lemma}\label{l spectral resolution for skew symmetric 1} Let $A$ be a unital, abelian, real C$^*$-algebra whose complexification is denoted by $B= C(K)$, for a suitable compact Hausdorff space $K$. Let  $\tau : B\to B$ be a period-2 conjugate-linear $^*$-automorphism satisfying $A = B^{\tau}$ and $\tau (a) (t) =  \overline{a (\sigma (t))},$ for all $t\in K$, $a\in C (K)$, where  $\sigma: K \to K$ is a period-2 homeomorphism. Then the set $N=\{t\in K : \sigma (t) \neq t\}$ is an open subset of $K$, $F=\{t\in K : \sigma (t) = t\}$ is a closed subset of $K$ and there exists an open subset $\mathcal{O}\subset K$ maximal with respect to the property $\mathcal{O}\cap \sigma (\mathcal{O}) = \emptyset.$

\end{lemma}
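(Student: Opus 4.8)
The plan is to treat the three assertions in turn, since the first two are essentially point-set topology and the third is a Zorn's-lemma argument. First I would show that $N=\{t\in K : \sigma(t)\neq t\}$ is open. Since $\sigma$ is a homeomorphism of the Hausdorff space $K$, for any $t_0$ with $\sigma(t_0)\neq t_0$ we may pick disjoint open neighbourhoods $U\ni t_0$ and $V\ni\sigma(t_0)$; then $W:=U\cap\sigma^{-1}(V)$ is an open neighbourhood of $t_0$, and for $t\in W$ we have $t\in U$ and $\sigma(t)\in V$, so $\sigma(t)\neq t$ because $U\cap V=\emptyset$. Hence $W\subseteq N$ and $N$ is open; consequently $F=K\setminus N$ is closed. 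Using $\sigma^2=\mathrm{id}$ one also notes $\sigma(N)=N$ and $\sigma(F)=F$, which will be convenient later (and is implicit in the use the paper makes of these sets).

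The substantive point is the existence of a maximal open set $\mathcal{O}$ with $\mathcal{O}\cap\sigma(\mathcal{O})=\emptyset$. I would apply Zorn's lemma to the family $\mathcal{F}$ of open subsets $U\subseteq K$ satisfying $U\cap\sigma(U)=\emptyset$, partially ordered by inclusion. This family is nonempty ($\emptyset\in\mathcal{F}$, and indeed any small enough neighbourhood of a point of $N$ lies in $\mathcal{F}$ by the argument of the previous paragraph). The key verification is that $\mathcal{F}$ is closed under unions of chains: if $\{U_i\}_{i\in I}$ is a chain in $\mathcal{F}$ and $U=\bigcup_i U_i$, then $U$ is open, and $\sigma(U)=\bigcup_i\sigma(U_i)$; if some $t$ lay in $U\cap\sigma(U)$, then $t\in U_i$ and $t\in\sigma(U_j)$ for some $i,j$, and taking the larger of $U_i,U_j$ in the chain — say $U_i\subseteq U_j$ — gives $t\in U_j\cap\sigma(U_j)=\emptyset$, a contradiction. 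Thus every chain has an upper bound in $\mathcal{F}$, and Zorn's lemma yields a maximal element $\mathcal{O}$.

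The main (and only real) obstacle is making sure the disjointness condition genuinely passes to unions of chains, i.e. that one does not need a \emph{directed} family or some compactness input that could fail: the computation above shows a plain chain suffices, because membership of a single point $t$ in $U$ and in $\sigma(U)$ is witnessed by finitely many — in fact two — members of the chain, which a chain can always dominate. (Equivalently, $\mathcal{F}$ is closed under unions of arbitrary directed subfamilies by the identical argument, but the chain case is all Zorn requires.) No continuity or compactness of $K$ is needed for this part; compactness and the Hausdorff property of $K$ enter only through the openness of $N$ in the first paragraph. I would close by remarking that $\mathcal{O}\subseteq N$ automatically — any point $t\in\mathcal{O}$ has $\sigma(t)\in\sigma(\mathcal{O})$, so $\sigma(t)\neq t$ — which is the form in which the lemma will be used later when approximating skew elements by sums $\sum_k i\,\alpha_k(\chi_{E_k}-\chi_{\sigma(E_k)})$ with the $E_k$ drawn from inside such an $\mathcal{O}$.
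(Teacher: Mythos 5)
Your proposal is correct and follows essentially the same route as the paper: the openness of $N$ (equivalently the closedness of $F$) via the Hausdorff separation property and continuity of $\sigma$, followed by the identical Zorn's lemma argument on the family of open sets $O$ with $O\cap\sigma(O)=\emptyset$, checking that a chain's union stays in the family because membership of a point in $U\cap\sigma(U)$ is witnessed by two members of the chain. The only difference is that you spell out the Hausdorff argument that the paper compresses into ``follows easily from the continuity of $\sigma$.''
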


\begin{proof}
That $F$ is closed follows easily from the continuity of $\sigma,$ and consequently, $N=K/F$ is open.

Let $\mathcal{F}$ be the family of all open subsets $O\subseteq K$ such that $O\cap \sigma( O)=\emptyset$ ordered by inclusion. Let $S=\{O_{\lambda}\}_{\lambda}$ be a totally ordered subset of $\mathcal{F}.$ We shall see that $O=\bigcup_{\lambda} O_{\lambda}$ is an open set which also lies in $\mathcal{F},$ that is, $O\cap \sigma (O)=\emptyset.$

Let us suppose, on the contrary, that there exists $t \in O \cap \sigma (O)\neq \emptyset.$ Then there exist $\lambda,\beta$ such that $t \in O_{\lambda}$ and $t \in \sigma (O_{\beta}).$  Since $S$ is totally ordered, $O_{\lambda} \subseteq O_{\beta}$ or $O_{\beta} \subseteq O_{\lambda}.$ We shall assume that $O_{\lambda} \subseteq O_{\beta}.$ Then $\sigma(O_{\lambda}) \subseteq \sigma(O_{\beta})$ and $t$ lies in $O_{\beta}\cap \sigma (O_{\beta})=\emptyset,$ which is a contradiction. Finally, Zorn's Lemma gives the existence of a maximal element $\mathcal{O}$ in $\mathcal{F}.$
\end{proof}

It should be noticed here that, in Lemma \ref{l spectral resolution for skew symmetric 1},
 $\mathcal{O}\cup \sigma(\mathcal{O}) = N$, an equality which follows from the maximality of $\mathcal{O}$.

Our next lemma analyses the ``spectral resolution'' of a *-skew-symmetric element in $B(K)^\tau.$

\begin{lemma}\label{l spectral resolution for skew symmetric} In the notation of Lemma \ref{l spectral resolution for skew symmetric 1}, let $B(A)= B(K)^{\tau}$, let $a\in B(K)^{\tau}_{sa},$ and let $b$ be an element in $B(A)_{skew}$. Then the following statements hold:\begin{enumerate}[$a)$]
\item  $b|F = 0$;
\item For each $\varepsilon >0$, there exist mutually disjoint Borel
sets $B_{1},\ldots,B_m \subset \mathcal{O}$ and real numbers $\lambda_1,\ldots,\lambda_m$ satisfying
$\displaystyle{\left\| b-\sum_{j=1}^{m}  i  \ \lambda_j (\chi_{_{B_j}} -  \chi_{_{\sigma(B_j)}})\right\| <\varepsilon;}$
\item For each $\varepsilon >0$, there exist mutually disjoint Borel
sets $C_{1},\ldots,C_m \subset K$ and real numbers $\mu_1,\ldots,\mu_m$ satisfying $\sigma (C_j)= C_j,$ and
$\displaystyle{\left\| a-\sum_{j=1}^{m} \mu_j \chi_{_{C_j}}\right\| <\varepsilon.}$
\end{enumerate}
\end{lemma}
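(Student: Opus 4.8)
The plan is to derive all three statements from the norm-density facts established in the paragraphs immediately preceding Lemma \ref{l spectral resolution for skew symmetric 1}, together with the combinatorial structure of $K$ provided by Lemma \ref{l spectral resolution for skew symmetric 1}. Recall from that discussion that every $b\in B(K)^{\tau}_{skew}$ is a norm-limit of finite sums $\sum_{k} i\,\alpha_k(\chi_{_{E_k}}-\chi_{_{\sigma(E_k)}})$ with $\alpha_k\in\mathbb{R}$ and the $E_k$ mutually disjoint Borel sets satisfying $\sigma(E_k)\cap E_k=\emptyset$; and every $a\in B(K)^{\tau}_{sa}$ is a norm-limit of finite sums $\sum_k \alpha_k\chi_{_{E_k}}$ with $\alpha_k\in\mathbb{R}$ and the $E_k$ mutually disjoint Borel sets with $\sigma(E_k)=E_k$.

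For part $a)$: if $t\in F$, i.e. $\sigma(t)=t$, then for every generator $i\,\alpha(\chi_{_{E}}-\chi_{_{\sigma(E)}})$ appearing in an approximating sum with $\sigma(E)\cap E=\emptyset$, we have $\chi_{_{E}}(t)-\chi_{_{\sigma(E)}}(t)=\chi_{_{E}}(t)-\chi_{_{E}}(\sigma(t))=0$. Hence every approximating simple function vanishes identically on $F$, and since uniform convergence preserves pointwise values, $b|F=0$. (Equivalently, $b=\tau(b)$ forces $b(t)=\overline{b(\sigma(t))}=\overline{b(t)}$, so $b(t)\in\mathbb{R}$ on $F$, while $b^*=-b$ forces $b(t)\in i\mathbb{R}$, so $b(t)=0$; either argument works and I would include the short pointwise one.)

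For parts $b)$ and $c)$ the only thing missing from the preliminary discussion is that the disjoint Borel sets can be arranged inside $\mathcal{O}$ (in case $b)$) and, in case $c)$, that they can be taken $\sigma$-invariant. For $c)$ this is immediate: the preliminary paragraph already produces the approximant with sets $E_k$ satisfying $\sigma(E_k)=E_k$, so I just rename them $C_k$ and $\mu_k$. For $b)$, start from an approximant $\sum_k i\,\alpha_k(\chi_{_{E_k}}-\chi_{_{\sigma(E_k)}})$ with the $E_k$ mutually disjoint and $\sigma(E_k)\cap E_k=\emptyset$; replace each $E_k$ by $B_k:=E_k\cap\mathcal{O}$ together with, for the part of $E_k$ lying in $\sigma(\mathcal{O})\setminus\mathcal{O}$, its image $\sigma(E_k\cap\sigma(\mathcal{O}))$, which lies in $\mathcal{O}$; using $\mathcal{O}\cup\sigma(\mathcal{O})=N$ (the remark after Lemma \ref{l spectral resolution for skew symmetric 1}), the fact that $E_k\subseteq N$ (since $\sigma(E_k)\cap E_k=\emptyset$ excludes fixed points), and the antisymmetry $\chi_{_{\sigma(E)}}-\chi_{_{E}}=-(\chi_{_{E}}-\chi_{_{\sigma(E)}})$ to absorb the sign into $\lambda_j$, one checks that $i\,\alpha_k(\chi_{_{E_k}}-\chi_{_{\sigma(E_k)}})$ is unchanged when $E_k$ is replaced by a disjoint union of two Borel subsets of $\mathcal{O}$ (possibly with flipped signs). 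Finally one disjointifies the resulting finite family of subsets of $\mathcal{O}$ in the usual way; since the functions $\chi_{_{B}}-\chi_{_{\sigma(B)}}$ are additive in $B$ over disjoint unions and $\mathcal{O}\cap\sigma(\mathcal{O})=\emptyset$ keeps the supports of $\chi_{_{B}}$ and $\chi_{_{\sigma(B)}}$ apart, nothing is lost, and relabelling gives the $B_j,\lambda_j$ as required.

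The main obstacle is the bookkeeping in $b)$: making sure that when one splits a Borel set $E\subseteq N$ along the partition $N=\mathcal{O}\,\dot\cup\,(\sigma(\mathcal{O})\setminus\mathcal{O})$ and pushes the second piece forward by $\sigma$, the resulting subsets of $\mathcal{O}$ stay disjoint across different indices $k$ and across the two pieces of the same $k$, and that the induced sign changes are consistent. This is purely a measure-theoretic disjointification argument with no analytic content, so I would state it cleanly and leave the routine verification to the reader, exactly in the style of the preliminary paragraphs.
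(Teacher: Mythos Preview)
Your proposal is correct and follows essentially the same route as the paper. The paper's own proof is extremely terse: for $a)$ it gives exactly the pointwise argument you mention parenthetically (using $b^*=-b$ to get $\Re\hbox{e}\,b=0$ and $\tau(b)=b$ together with $\sigma(t)=t$ to get $\Im\hbox{m}\,b(t)=0$), and for $b)$ and $c)$ it simply states that they ``follow from the comments prior to Lemma \ref{l spectral resolution for skew symmetric 1} and the maximality of $\mathcal{O}$ in that Lemma''; you have merely spelled out the disjointification bookkeeping for $b)$ that the paper leaves entirely to the reader.
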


\begin{proof}
$a)$ Since $b^*=-b$, we have $Re(b(t))=0,\forall t\in K.$ Now, let $t\in F$, applying $\sigma(t)=t$ and $\tau(b)=b$ we get
$\overline{b(t)}=\overline{b(\sigma(t))}=b(t),$ and hence $\Im\hbox{m}(b(t))=0.$

Statements $b)$ and $c)$ follow from the comments prior to Lemma \ref{l spectral resolution for skew symmetric 1} and the maximality of $\mathcal{O}$ in that Lemma.
\end{proof}

It is clear that in a commutative real (or complex) C$^*$-algebra, $A,$ two elements $a,b$ are
orthogonal if and only if they have zero-product, that is, $ab=0.$
Therefore, $V(a,b^*)=0=V(a,b)$ whenever $V:A \times A\to\RR$ is an orthogonal bilinear
form on an abelian real C$^*$-algebra and $a,b$ are two orthogonal elements in $A$.
We shall make use of this property without an explicit mention.

We shall keep the notation of Lemma \ref{l spectral resolution for skew symmetric 1} throughout the section.
Henceforth, for each $C\subseteq \mathcal{O}$ we shall write $u_{_C}= i  \ (\chi_{_C}-\chi_{_{\sigma(C)}}).$ The symbol $u_{_0}$ will stand for the element $u_{_{\mathcal{O}}}.$ It is easy to check
$1=\chi_{_F}+u_{_0}u_{_0}^*,$ where $1$ is the unit element in $B(K)^{\tau}.$
By Lemma \ref{l spectral resolution for skew symmetric} $a)$, for each  $b\in B(K)^{\tau}_{skew}$ we have
 $b \perp \chi_{_F},$ and so $b = b u_{_0}u_{_0}^*.$

\begin{proposition}\label{p extn Boral algebra orthogonal}
Let $K$ be a compact Hausdorff space, $\tau$
a period-2 conjugate-linear isometric $^*$-homomorphism on $C(K)$,
$A= C(K)^{\tau}$, and $V: A \times A \to \mathbb{R}$ be an orthogonal bounded
bilinear form whose Arens extension is denoted by $V^{**} :
A^{**}\times A^{**} \to \mathbb{R}$. Let $\sigma: K \to K$ be a period-2 homeomorphism
satisfying $\tau (a) (t) =  \overline{a (\sigma (t))},$ for all $t\in K$, $a\in C (K)$.
Then the following assertions hold for all Borel subsets $D,B,C$ of $K$ with
$\sigma(B)\cap B=\sigma(C) \cap C = \emptyset$ and $\sigma(D) =D$:
\begin{enumerate}[$a)$]
\item $V(\chi_{_D},u_{_B})=V(u_{_B},\chi_D)=0,$ whenever $D\cap
B=\emptyset$;

\item $V(u_{_B},u_{_C})=0,$ whenever $B\cap C=\emptyset;$

\item
$V((u_{_0}u_{_0}^*-u_{_C}u^*_{_C})u_{_B},u_{_C})=V(u_{_C},(u_{_0}u_{_0}^*-u_{_C}u^*_{_C})u_{_B})=0.$
\end{enumerate}
\end{proposition}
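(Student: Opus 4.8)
The plan is to reduce all three identities to the single assertion ``$V^{**}(x,y)=0$ whenever $x,y\in B(K)^{\tau}\subseteq A^{**}$ have disjoint supports'', and to prove the latter by a weak$^{*}$-approximation argument: realise $x$ and $y$ as weak$^{*}$-limits of bounded nets in $A=C(K)^{\tau}$ with disjoint supports (on which $V$ vanishes by orthogonality), and pass to the limit using the separate weak$^{*}$-continuity of the Arens extension guaranteed by Lemma \ref{l extensions real forms}.

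\emph{Reductions.} Since $B\cap\sigma(B)=\emptyset$ forces $B\cap F=\emptyset$, splitting $B$ into its traces on $\mathcal O$ and on $\sigma(\mathcal O)$ gives $u_{_B}=u_{_{B_1}}-u_{_{\sigma(B_2)}}$ with $B_1,\sigma(B_2)\subseteq\mathcal O$, so by bilinearity we may assume throughout that $B,C\subseteq\mathcal O$. Then $u_{_0}u_{_0}^{*}=\chi_{_N}$ and $u_{_C}u_{_C}^{*}=\chi_{_{C\cup\sigma(C)}}$, and, splitting $B=(B\cap C)\sqcup(B\setminus C)$, one checks that $(u_{_0}u_{_0}^{*}-u_{_C}u_{_C}^{*})u_{_B}=u_{_{B\setminus C}}$; hence $(c)$ is precisely $(b)$ with $B$ replaced by $B\setminus C$, which is disjoint from $C$. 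A short support computation now gives $\chi_{_D}u_{_B}=0$ in $(a)$ (because $D\cap B=\emptyset$ together with $\sigma(D)=D$ yields $D\cap(B\cup\sigma(B))=\emptyset$) and $u_{_B}u_{_C}=0$ in $(b)$ (here $B\cap\sigma(C)=\emptyset$ automatically, since $B\subseteq\mathcal O$ and $\sigma(C)\subseteq\sigma(\mathcal O)$). Thus in every case the two entries of $V^{**}$ are orthogonal elements of $B(K)^{\tau}$.

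\emph{The open case.} Suppose first that the relevant Borel sets are open. For an open $\sigma$-invariant $U\subseteq K$, the real functions $f\in A$ with $0\le f\le\chi_{_U}$ and $\overline{\mathrm{coz}(f)}\subseteq U$ form (by normality of $K$ and Urysohn's lemma, averaging a Urysohn function with its $\sigma$-twist) an upward directed family with supremum $\chi_{_U}$, hence a bounded net in $A$ converging weak$^{*}$ to $\chi_{_U}$ with all supports inside $U$. Likewise, for open $W\subseteq\mathcal O$, picking continuous $g_{\beta}\uparrow\chi_{_W}$ with $\overline{\mathrm{coz}(g_{\beta})}\subseteq W$ and setting $w_{\beta}:=i(g_{\beta}-g_{\beta}\circ\sigma)\in A$, one has $\overline{\mathrm{coz}(w_{\beta})}\subseteq W\cup\sigma(W)$ and, by weak$^{*}$-continuity of the $^{*}$-automorphism $h\mapsto h\circ\sigma$ of $C(K)^{**}$, $w_{\beta}\to u_{_W}$ weak$^{*}$. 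In $(a)$ the nets approximating $\chi_{_D}$ and $u_{_B}$ have disjoint supports, so their products vanish, $V$ vanishes on them, and taking iterated weak$^{*}$-limits (legitimate by separate weak$^{*}$-continuity of $V^{**}$) gives $V^{**}(\chi_{_D},u_{_B})=V^{**}(u_{_B},\chi_{_D})=0$; case $(b)$, and hence $(c)$, is handled identically.

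\emph{From open to Borel, and the main obstacle.} Fixing the open entry, say $u_{_B}$ with $B$ open, the map $E\mapsto V^{**}(\chi_{_E},u_{_B})$ on $\sigma$-invariant Borel sets is countably additive (partial sums of $\sum_{n}\chi_{_{E_n}}$ converge weak$^{*}$ in $C(K)^{**}$) and coincides with $E\mapsto\rho(E)$ for the regular measure $\rho\in M(K)^{\widetilde\tau}$ representing $V^{**}(\cdot\,,u_{_B})\in A^{*}$. Since the $\sigma$-invariant Borel subsets of $K$ correspond to the Borel subsets of the orbit space $K/\sigma$, the open $\sigma$-invariant sets form a generating $\pi$-system, and a Dynkin $\pi$--$\lambda$ argument promotes the vanishing of the open case to arbitrary $\sigma$-invariant Borel $D$; a symmetric argument then removes the openness of $B$ and of $C$. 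The genuinely delicate point is this passage from open to Borel: two disjoint \emph{Borel} subsets of $K$ need not be separated by open sets, so the clean disjoint-support approximation of the previous step is unavailable and one is forced into a measure-theoretic limiting argument; the awkward part is the behaviour on the topological boundary of the open approximants — a $\sigma$-invariant Borel set disjoint from an open $U$ may still meet $\partial U$, and regularity of $\rho$ alone does not force it to vanish there. I expect this to be resolved by combining regularity of the representing measures with the monotone $\sigma$-completeness of $B(K)^{\tau}$ and the spectral-resolution descriptions of Lemmas \ref{l spectral resolution for skew symmetric 1} and \ref{l spectral resolution for skew symmetric}, which allow an arbitrary Borel set to be approximated by countable monotone combinations of open ones while keeping supports under control.
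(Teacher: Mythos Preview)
Your reductions are sound and the unified target---$V^{**}(x,y)=0$ whenever $x,y\in B(K)^{\tau}$ have disjoint supports---is the right one; the open case is handled correctly. The genuine gap is exactly where you place it: the passage from open to Borel is not proved, only hoped for, and the Dynkin/monotone-class route you sketch runs straight into the obstacle you yourself flag, since a Borel set disjoint from an open $U$ may meet $\overline{U}$, so outer approximation cannot keep supports apart.

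The paper sidesteps this by approximating from the \emph{inside} with compact sets rather than from the outside with open ones. First establish the identities for \emph{compact} $K_{1},K_{2}$ (with the relevant disjointness from $\sigma(K_{1}),\sigma(K_{2})$); this is precisely your open-case argument, because disjoint compacts in a compact Hausdorff space have disjoint open neighbourhoods and Urysohn applies. Then, for Borel $D$ (respectively $B$), take the net $(\chi_{_{K}})$ indexed by compact $K\subseteq D$ under inclusion: inner regularity of every Radon measure gives $\chi_{_{K}}\to\chi_{_{D}}$ weak$^{*}$ in $C(K)^{**}$, and compact subsets of disjoint Borel sets are automatically disjoint compacts, so the previous step applies termwise. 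Two passes of separate weak$^{*}$-continuity finish the job---no $\pi$--$\lambda$ argument, no boundary difficulties.

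One minor caveat on your reduction for $(b)$: expanding $u_{_{B}}=u_{_{B_{1}}}-u_{_{\sigma(B_{2})}}$ bilinearly produces cross terms such as $V(u_{_{B_{1}}},u_{_{\sigma(C_{2})}})$ in which $B_{1},\sigma(C_{2})\subseteq\mathcal{O}$ need \emph{not} be disjoint (only $B\cap C=\emptyset$ is assumed, not $B\cap\sigma(C)=\emptyset$), so ``by bilinearity we may assume $B,C\subseteq\mathcal{O}$'' does not literally derive the general case from the special one. In fact $(b)$ as stated fails without $B\cap\sigma(C)=\emptyset$: with $K=\{1,2\}$, $\sigma(1)=2$, $B=\{1\}$, $C=\{2\}$ and $V(x,y)=\phi(xy^{*})$ one gets $V(u_{_{B}},u_{_{C}})=-\phi(1)$. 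This is harmless for the paper, which only ever applies the proposition with $B,C\subseteq\mathcal{O}$ (where $B\cap\sigma(C)\subseteq\mathcal{O}\cap\sigma(\mathcal{O})=\emptyset$ is automatic), but your reduction should be read as restricting to that case rather than deducing the general one.
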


\begin{proof} By an abuse of notation, we write $V$ for $V$ and $V^{**}$.

Let $K_1,K_2$ be compact subsets of $K$ such that $K_1, K_2$ and $\sigma(K_2)$ are mutually disjoint.
By regularity and Urysohn's Lemma there exist nets $(f_{\lambda})_{_{\lambda}}$, $(g_{\gamma})_{_{\gamma}}$ in $C(K)^{+}$ such that $\chi_{_{K_1}}\leq f_{\lambda} \leq \chi_{_{K\backslash(K_2\cup\sigma(K_2))}},$ $\chi_{_{K_2}}\leq g_{\gamma} \leq \chi_{_{K\backslash(K_1\cup\sigma(K_1)\cup \sigma(K_2))}},$ $(f_{\lambda})_{_{\lambda}}$ (respectively, $(g_{\gamma})_{_{\gamma}}$) converges to $\chi_{_{K_1}}$ (resp., to $\chi_{_{K_2}}$) in the weak$^*$ topology of $C(K)^{**}$.

The nets $\widetilde{f}_{\lambda}=\frac{1}{2}(f_{\lambda}+\tau(f_{\lambda}))$ and $\widetilde{g}_{\gamma}=i(g_{\gamma}-\tau(g_\gamma))$ lie in $C(K)^{\tau}$ and converge in the weak$^*$ topology of $C(K)^{**}$ to $\frac{1}{2}(\chi_{_{K_1}}+\chi_{_{\sigma(K_1)}})$ and $u_{_{K_2}},$ respectively. It is also clear that $f_{\lambda} \perp g_{\gamma},$  $\tau(f_{\lambda}) \perp g_{\gamma},$ and hence $\widetilde{f}_{\lambda}\perp \widetilde{g}_{\gamma},$ for every $\lambda,\gamma.$

By the separate weak$^*$ continuity of $V^{**}\equiv V$ we have
 \begin{equation}\label{eq orth sym vs anti compact} V\left(\frac{1}{2}(\chi_{_{K_1}}+\chi_{_{\sigma(K_1)}}),u_{_{K_2}}\right)=w^*-\lim_{\lambda}\left( w^*-\lim_{\gamma}V\left(
\widetilde{f_{\lambda}},\widetilde{g}_{\gamma}\right)\right)=0, \end{equation}
and $$V\left(u_{_{K_2}},\frac{1}{2}(\chi_{_{K_1}}+\chi_{_{\sigma(K_1)}})\right)=0. $$

We can similarly prove that \begin{equation}\label{eq orth anti vs anti compact} V\left(u_{_{K_1}},u_{_{K_2}}\right)=0,
 \end{equation} whenever $K_1$ and $K_2$ are two compact subsets of $K$ such that $K_1, K_2, \sigma(K_1)$ and $\sigma(K_2)$ are pairwise disjoint.

$a)$ Let now $D,B$ be two disjoint Borel subsets of $K$ such that $\sigma(D)=D$ and $B\subseteq \mathcal{O}.$
By inner regularity there exist nets of the form $(\chi_{_{K_{\lambda}^{^D}}})_{\lambda}$ and $(\chi_{_{K_{\gamma}^{^B}}})_{\gamma}$ such that $(\chi_{_{K_{\lambda}^{^D}}})_{\lambda}$ and  $(\chi_{_{K_{\gamma}^{^B}}})_{\gamma}$ converge in the weak$^*$ topology of $C(K)^{**}$ to $\chi_{_D}$ and $\chi_{_B}$, respectively, where each $K_{\lambda}^{^D} \subseteq D$ and each $K_{\gamma}^{^B}\subseteq B$ is
compact subset of $K$. By the assumptions made on $D$ and $B$ we have that $K_{\lambda}^{^D}\cap K_{\gamma}^{^B}=K_{\lambda}^{^D}\cap \sigma(K_{\gamma}^{^B})=\emptyset$ and $K_{\gamma}^{^B}\subseteq \mathcal{O}$ for all $\lambda$ and $\gamma.$ By (\ref{eq orth sym vs anti compact}) and the separate weak$^*$ continuity of $V$ we have \begin{equation}
 \label{eq orth symm vs anti} V(\chi_{_D},u_{_B})=w^*-\lim_{\lambda} \left(w^*-\lim_{\gamma} V\left(\frac{\chi_{_{K_{\lambda}^{^D}}}+\chi_{_{\sigma(K_{\lambda}^{^D})}}}{2},
  u_{_{K_{\gamma}^{^B}}}\right)\right)=0,
 \end{equation}
 and
\begin{equation}
 \label{eq orth antivs symm} V(u_{_B},\chi_{_{_D}})=0.
 \end{equation}\medskip

A similar argument, but replacing (\ref{eq orth sym vs anti compact}) with (\ref{eq orth anti vs anti compact}), applies to prove $b)$.

To prove the last statement, we observe that
$$(u_{_0}u_{_0}^*-u_cu_c^*)u_{_B}=(\chi_{_\mathcal{O}}+\chi_{_{\sigma(\mathcal{O}})}-\chi_{_C}-\chi_{_{\sigma(C)}})u_{_B}=
  (\chi_{_{\mathcal{O}\setminus C}}+ \chi_{_{\sigma(\mathcal{O}\setminus C)}})u_{_B}=u_{_{(\mathcal{O}\setminus C)\cap
  B}},$$ and hence the statement $c)$ follows from $b)$.
\end{proof}

We can now establish the description of all orthogonal forms on a commutative real C$^*$-algebra.

\begin{theorem}\label{t orhtogonal forms real abelian}
Let $V: A\times A \to \mathbb{R}$ be a continuous orthogonal form on a commutative real C$^*$-algebra,
then there exist $\varphi_1,$ and $\varphi_2$ in $A^{*}$ satisfying $$V(x,y) = \varphi_1 (x y) + \varphi_2 ( x y^*),$$ for every $x,y\in A.$
\end{theorem}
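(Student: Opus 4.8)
\emph{Proposed proof.} By Proposition~\ref{prop extns to the multiplier} we may and do assume that $A$ is unital, so $A=C(K)^{\tau}$ for a compact Hausdorff space $K$ and a period-$2$ conjugate-linear $^{*}$-automorphism $\tau$ arising from a period-$2$ homeomorphism $\sigma\colon K\to K$. The plan is to work inside the Borel algebra $B(A)=B(K)^{\tau}$, a unital commutative real C$^{*}$-algebra that is monotone $\sigma$-complete (a bounded increasing sequence of self-adjoint elements has a pointwise supremum, which again lies in $B(K)^{\tau}$), and to use the separately weak$^{*}$ continuous Arens extension $V^{**}\colon A^{**}\times A^{**}\to\mathbb{R}$. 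The first step is to upgrade Proposition~\ref{p extn Boral algebra orthogonal} to the assertion that $V^{**}$ restricts to an \emph{orthogonal} form on $B(A)$. To obtain this, given $z\perp w$ in $B(A)$ I would write $z=z\chi_{_F}+z\chi_{_N}$ and $w=w\chi_{_F}+w\chi_{_N}$, approximate each summand in norm by finite linear combinations of the elementary elements $\chi_{_C}$ (with $\sigma(C)=C$) and $u_{_B}=i\,(\chi_{_B}-\chi_{_{\sigma(B)}})$ (with $B\subseteq\mathcal{O}$) having supports contained in the support of the approximated element, and then apply Proposition~\ref{p extn Boral algebra orthogonal} together with the Urysohn-type argument used in its proof (which also disposes of the remaining ``symmetric versus symmetric'' terms).

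Next I would introduce the skew element $u_{_0}=u_{_{\mathcal{O}}}\in B(A)_{skew}$, for which $u_{_0}^{*}=-u_{_0}$, $u_{_0}^{2}=-\chi_{_N}$ and $1=\chi_{_F}+u_{_0}u_{_0}^{*}$, together with the four bounded bilinear forms on $B(A)_{sa}\times B(A)_{sa}$
\[
V_{0}(a,c)=V^{**}(a,c),\quad V_{1}(a,c)=V^{**}(a\,u_{_0},c),\quad V_{2}(a,c)=V^{**}(a,c\,u_{_0}),\quad V_{3}(a,c)=V^{**}(a\,u_{_0},c\,u_{_0}).
\]
Since $(a\,u_{_0})^{*}=-a\,u_{_0}$ and $(c\,u_{_0})^{*}=-c\,u_{_0}$ when $a,c$ are self-adjoint, the relation $ac=0$ forces the two arguments of each $V_{j}$ to be orthogonal in $B(A)$; by the first step each $V_{j}$ is therefore orthogonal on $B(A)_{sa}$. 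As $B(A)$ is unital and monotone $\sigma$-complete, Lemma~\ref{l orthog forms on Asa} (which, by the remark following it, applies in this generality) gives, for all $a,c\in B(A)_{sa}$,
\[
V^{**}(a,c)=V^{**}(ac,1),\quad V^{**}(a\,u_{_0},c)=V^{**}(ac\,u_{_0},1),\quad V^{**}(a,c\,u_{_0})=V^{**}(ac,u_{_0}),\quad V^{**}(a\,u_{_0},c\,u_{_0})=V^{**}(ac\,u_{_0},u_{_0}).
\]

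With this in hand, I would set $\varphi_{1}(z)=\tfrac12\bigl(V(z,1)-V^{**}(z\,u_{_0},u_{_0})\bigr)$ and $\varphi_{2}(z)=\tfrac12\bigl(V(z,1)+V^{**}(z\,u_{_0},u_{_0})\bigr)$ for $z\in A$; both are bounded and linear because $z\mapsto z\,u_{_0}$ is a bounded operator from $A$ into $B(A)\subseteq A^{**}$, so $\varphi_{1},\varphi_{2}\in A^{*}$. Given $x,y\in A$, decompose $x=a+b$ and $y=c+d$ with $a,c\in A_{sa}$, $b,d\in A_{skew}$. By Lemma~\ref{l spectral resolution for skew symmetric} the skew parts vanish on $F$, whence $b=(-b\,u_{_0})u_{_0}$ and $d=(-d\,u_{_0})u_{_0}$ with $-b\,u_{_0},-d\,u_{_0}\in B(A)_{sa}$, and also $a=a\chi_{_F}+a\chi_{_N}$. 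Feeding these decompositions into the four displayed identities, discarding the cross-terms supported on $F$ via the orthogonality of $V^{**}$ and using $u_{_0}^{2}=-\chi_{_N}$ and $\chi_{_N}z=z$ for $z$ supported on $N$, one obtains $V(a,c)=V(ac,1)$, $V(b,c)=V(bc,1)$, $V(a,d)=-V^{**}(ad\,u_{_0},u_{_0})$ and $V(b,d)=-V^{**}(bd\,u_{_0},u_{_0})$. Adding these four equalities and using $xy=ac+ad+bc+bd$, $xy^{*}=ac-ad+bc-bd$, $\varphi_{1}+\varphi_{2}=V(\cdot,1)$ and $\varphi_{1}-\varphi_{2}=-V^{**}(\cdot\,u_{_0},u_{_0})$ yields $V(x,y)=\varphi_{1}(xy)+\varphi_{2}(xy^{*})$, as required.

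I expect the genuine obstacle to be the first step, namely deriving full orthogonality of $V^{**}$ on $B(A)$ from the restricted vanishing statements of Proposition~\ref{p extn Boral algebra orthogonal}: one must select the elementary approximants of $z$ and $w$ with supports inside those of $z$ and $w$ so that mutual orthogonality survives the limiting process, and must treat separately the pieces lying in $\chi_{_F}B(A)$, $\chi_{_{\mathcal{O}}}B(A)$ and $\chi_{_{\sigma(\mathcal{O})}}B(A)$. Everything after that is straightforward, if somewhat laborious, bookkeeping with these three supports and the relations satisfied by $u_{_0}$.
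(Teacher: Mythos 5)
Your proof is correct, and it follows the paper's overall strategy (reduction to the unital case via Proposition \ref{prop extns to the multiplier}, passage to the Borel algebra $B(K)^{\tau}$ through the separately weak$^*$ continuous Arens extension, and the element $u_{_0}$ as the pivot for the skew part), but the middle of the argument is organised genuinely differently. The paper never establishes that $V^{**}$ is orthogonal on all of $B(K)^{\tau}$ at this stage --- that is only recorded afterwards as Corollary \ref{c orhtogonal forms bidual}, a consequence of the theorem; instead it proves the four identities $V(a_1,a_2)=V(a_1a_2,1)$, $V(\chi_{_D},u_{_B})=V(1,u_{_B}\chi_{_D})$, $V(u_{_B},\chi_{_D})=V(u_{_B}\chi_{_D},1)$ and $V(u_{_B},u_{_C})=V(u_{_B}u_{_C}u_{_0}^*,u_{_0})$ by direct manipulation of simple elements using Proposition \ref{p extn Boral algebra orthogonal} $a)$--$c)$, and then expands $V(a_1+b_1,a_2+b_2)$ bilinearly over simple elements and passes to the norm limit. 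You instead first upgrade Proposition \ref{p extn Boral algebra orthogonal} to full orthogonality of $V^{**}$ on $B(K)^{\tau}$ (correctly identifying the missing ``symmetric versus symmetric'' case, which is handled by the same Urysohn/inner-regularity device as in that proposition's proof, and noting that the cozero set of a $\tau$-invariant element is $\sigma$-invariant so the elementary approximants can be chosen with supports inside it), and then funnel all four blocks of the computation through Lemma \ref{l orthog forms on Asa}$(c)$ applied to the twisted forms $(a,c)\mapsto V^{**}(a\epsilon_1,c\epsilon_2)$ with $\epsilon_i\in\{1,u_{_0}\}$, using that $B(K)^{\tau}$ is monotone $\sigma$-complete. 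This buys a cleaner and more conceptual endgame --- your final bookkeeping with $\varphi_1\pm\varphi_2$ checks out and is shorter than the paper's four-functional recombination --- at the cost of front-loading the approximation work into the orthogonality upgrade; it also has the incidental advantage of invoking Lemma \ref{l orthog forms on Asa} (which does not require symmetry of $V$) where the paper cites Proposition \ref{p orthog forms on Asa} (which formally does). Both routes rest on exactly the same two propositions, so the difference is one of architecture rather than of substance.
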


\begin{proof} We may assume, without loss of generality, that $A$ is unital (compare Proposition \ref{prop extns to the multiplier}). Let $B$ denote the complexification of $A$. In this case $B$ identifies with $C(K)$ for a suitable compact Hausdorff space $K$ and $A=C(K)^{\tau}$, where $\tau$ is a conjugate-linear period-2 *-homomorphism on $C(K)$. We shall follow the notation employed in the rest of this section.

The form $V: A\times A \to \mathbb{R}$ extends to a continuous form $V^{**}: A^{**}\times A^{**} \to \mathbb{R}$ which is separately weak$^*$ continuous (cf. Lemma \ref{l extensions real forms}). The restriction  $V^{**}|_{B(K)^{\tau}\times B(K)^{\tau}}: B(K)^{\tau}\times B(K)^{\tau} \to \mathbb{R}$ also is a continuous extension of $V$. We shall prove the statement for $V^{**}|_{B(K)^{\tau}\times B(K)^{\tau}}.$ Henceforth, the symbol $V$ will stand for $V$, $V^{**}$ and $V^{**}|_{B(K)^{\tau}\times B(K)^{\tau}}$ indistinctly.

Let us first take two self-adjoint elements $a_1,a_2$ in $B(K)^{\tau}$.
By Proposition \ref{p orthog forms on Asa}, \begin{equation}
\label{eq theorem -1} V(a_1,a_2)=V(a_1 a_2,1).
\end{equation}

To deal with the skew-symmetric part, let $D,B,C$ be Borel subsets of $K$ with, $D=\sigma(D)$ and $B,C \subseteq
\mathcal{O}.$ From Proposition \ref{p extn Boral algebra orthogonal} $a)$, we have \begin{equation}\label{eq sym against anti}V(\chi_{_D},u_{_B})=V(\chi_{_D},u_{_B}(1-\chi_{_D}+\chi_{_D}))= V(\chi_{_D},u_{_{B\cap (K\backslash D)}})+ V(\chi_{_D},u_{_B}\chi_{_D})\end{equation} $$=V(\chi_{_D}-1+1,u_{_B}\chi_{_D})= V(-\chi_{_{(K\backslash D)}}+1,u_{_{(B\cap D)}}) =V(1,u_{_B}\chi_{_D}).$$

Similarly,
\begin{equation}\label{eq anti against sym}V(u_{_B},\chi_{_D})=V(u_{_B}\chi_{_D},1).
\end{equation}

Now, Proposition \ref{p extn Boral algebra orthogonal} $b)$ and $c)$, repeatedly applied give:  $$V(u_{_B},u_{_C})=V(u_{_B}(\chi_{_F}+u_{_0}u_{_0}^*),u_{_C})=V(u_{_B}u_{_0}u_{_0}^*,u_{_C})$$ $$=V(u_{_B}(u_{_0}u_{_0}^*+u_{_C}u^*_{_C} -u_{_C}u^*_{_C}),u_{_C})=   V(u_{_B}u_{_C}u^*_{_C},u_{_C})$$ $$=V(u_{_B}u_{_C}u^*_{_C},u_{_C}-u_{_0}+u_{_0}) = V(u_{_{(B\cap C)}},-u_{_{(\mathcal{O}\backslash C)}}+u_{_0}) =V(u_{_{(B\cap C)}},u_{_0})$$ $$= V(u_{_B}u_{_C}(u^*_{_C}-u_{_0}^*+u_{_0}^*),u_{_0})=V(u_{_B}u_{_C}u_{_0}^*,u_{_0}).$$
Thus, we have \begin{equation} \label{eq anti anti 1}V(u_{_B},u_{_C})=V(u_{_B}u_{_C}u_{_0}^*,u_{_0}),
\end{equation} and similarly \begin{equation} \label{eq anti anti 2}V(u_{_B},u_{_C})= V(u_{_0},u_{_B}u_{_C}u_{_0}^*).\end{equation}

Let $\displaystyle{a_l=\sum_{j=1}^{m_l} \mu_{l,j} \chi_{_{D^l_j}},}$ $\displaystyle{b_l=\sum_{k=1}^{p_l} \lambda_{l,k} u_{_{B^l_k}}}$ ($l\in\{1,2\}$) be two simple elements in $B(K)^{\tau}_{sa}$ and $B(K)^{\tau}_{skew}$, respectively, where $\lambda_{l,k}, \mu_{l,j}\in \mathbb{R},$ for each $l\in \{1,2\},$ $\{D^l_1,\ldots, D^l_{m_l}\}$ and $\{B^l_1,\ldots, B^l_{p_l}\}$ are families of mutually disjoint Borel subsets of $K$ with $\sigma(D^l_j) = D^l_j$ and $B^l_i \subseteq \mathcal{O}.$ By $(\ref{eq theorem -1})$, $(\ref{eq sym against anti})$, $(\ref{eq anti against sym})$, and $(\ref{eq anti anti 1}),$ we have
$$V(a_1+b_1, a_2+b_2)=V(a_1 a_2,1)+ \sum_{j=1}^{m_1} \sum_{k=1}^{p_2} \mu_{1,j}  \lambda_{2,k} V\left( \chi_{_{D^1_j}},  u_{_{B^2_k}}\right)$$ $$+\sum_{k=1}^{p_1}\sum_{j=1}^{m_2} \mu_{2,j}  \lambda_{1,k} V\left( u_{_{B^1_k}}, \chi_{_{D^2_j}}\right)+\sum_{k=1}^{p_1}\sum_{k=1}^{p_2} \lambda_{2,k} \lambda_{1,k} V\left( u_{_{B^1_k}},  u_{_{B^2_k}}\right)$$ $$= V(a_1 a_2,1)+ \sum_{j=1}^{m_1} \sum_{k=1}^{p_2} \mu_{1,j}  \lambda_{2,k} V\left(1 ,  \chi_{_{D^1_j}} u_{_{B^2_k}}\right)$$ $$+\sum_{k=1}^{p_1}\sum_{j=1}^{m_2} \mu_{2,j}  \lambda_{1,k} V\left( u_{_{B^1_k}} \chi_{_{D^2_j}} , 1\right)+\sum_{k=1}^{p_1}\sum_{k=1}^{p_2} \lambda_{2,k} \lambda_{1,k} V\left( u_{_{B^1_k}}u_{_{B^2_k}} u_{_0}^*, u_{_0} \right) $$ $$= V(a_1 a_2,1)+  V\left(1 ,  a_1 b_2\right) +V\left( b_1 a_2,1\right) + V\left( b_1 b_2 u_{_0}^*, u_{_0}\right)$$
$$= \psi_1 (a_1 a_2)+  \psi_2 \left(a_1 b_2\right) +\psi_1 \left( b_1 a_2 \right) + \psi_4 \left( b_1 b_2\right),$$
where $\psi_1,\psi_2,$ and $\psi_4$ are the functionals in $A^*$ defined by $\psi_1(x)= V(x,1),$ $\psi_2(x)= V(1,x),$ and $\psi_4(x) = V(x u_{_0}^*,u_{_0}),$ respectively. Since, by Proposition \ref{l spectral resolution for skew symmetric}, simple elements of the above form are norm-dense in $B(K)^{\tau}_{sa}$ and $B(K)^{\tau}_{skew}$, respectively, and $V$ is continuous, we deduce that $$V(a_1+b_1, a_2+b_2)= \psi_1 (a_1 a_2)+  \psi_2 \left(a_1 b_2\right) +\psi_1 \left( b_1 a_2 \right) + \psi_4 \left( b_1 b_2\right),$$ for every $a_1,a_2\in B(K)^{\tau}_{sa}$, $b_1,b_2 \in B(K)^{\tau}_{skew}.$

Now, taking $\phi_1 = \frac14 (2 \psi_1 +\psi_2+\psi_4),$ $\phi_2 = \frac14 (2 \psi_1 -\psi_2-\psi_4),$
$\phi_3 = \frac14 ( \psi_2- \psi_4),$ and $\phi_4 = \frac14 (\psi_4-\psi_2),$ we get $$V(a_1+b_1, a_2+b_2)= \phi_1 ((a_1+b_1) (a_2+b_2))+  \phi_2 \left((a_1+b_1) (a_2+b_2)^*\right) $$ $$+\phi_3 \left( (a_1+b_1)^* (a_2+b_2) \right) + \phi_4 \left( (a_1+b_1)^* (a_2+b_2)^*\right),$$ for every $a_1,a_2\in B(K)^{\tau}_{sa}$, $b_1,b_2 \in B(K)^{\tau}_{skew}.$

Finally, defining $\varphi_1 (x) = \phi_1 (x)+ \phi_4(x^*)$ and  $\varphi_2(x)= \phi_2(x)+ \phi_3(x^*)$ $(x\in A)$, we get the desired statement.
\end{proof}

\begin{remark}\label{r uniquenes  functionals}
The functionals $\varphi_1$ and $\varphi_2$ appearing in Theorem \ref{t orhtogonal forms real abelian} need not be unique. For example, let $(\varphi_1,\varphi_2)$ and $(\phi_1,\phi_2)$ be two couples of elements in the dual of a commutative real C$^*$-algebra $A$. It is not hard to check that $$\varphi_1 (x y) + \varphi_2 ( x y^*) = \phi_1 (x y) + \phi_2 ( x y^*),$$ for every $x,y\in A$ if, and only if, $\varphi_1 +\varphi_2 = \phi_1+\phi_2$, $(\varphi_1 -\varphi_2) (z) = (\phi_1-\phi_2) (z)$ and $(\varphi_1 -\varphi_2) (z w) = (\phi_1-\phi_2) (z w),$ for every $z,w\in A_{skew}.$ These conditions are not enough to guarantee that $\phi_i=\varphi_i$. Take, for example, $A= \mathbb{R}\oplus^{\infty} \mathbb{C}_{\mathbb{R}},$ $\phi_1 (a,b) = a + \Re\hbox{e} (b) + \Im\hbox{m} (b),$  $\phi_2 (a,b) = 0,$ $\varphi_1 (a,b) = \frac{a}{2} + \Re\hbox{e} (b) + \Im\hbox{m} (b),$ and $\varphi_2 (a,b) = \frac{a}{2} .$
\end{remark}

\begin{corollary}\label{c orhtogonal forms bidual}
Let $V: A\times A \to \mathbb{R}$ be a continuous orthogonal form on a commutative real C$^*$-algebra, then
its (unique) Arens extension $V^{**}: A^{**}\times A^{**} \to \mathbb{R}$ is an orthogonal form.$\hfill\Box$
\end{corollary}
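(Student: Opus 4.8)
The plan is to use Theorem \ref{t orhtogonal forms real abelian} to reduce the statement to an easy computation at the level of the representing functionals. By that theorem there exist $\varphi_1,\varphi_2\in A^*$ with $V(x,y)=\varphi_1(xy)+\varphi_2(xy^*)$ for all $x,y\in A$. The natural candidate for the Arens extension is then the form $W(x,y):=\varphi_1^{**}(xy)+\varphi_2^{**}(xy^*)$ on $A^{**}\times A^{**}$, where $\varphi_i^{**}$ denotes the (unique, by weak$^*$-density of $A$) weak$^*$-continuous extension of $\varphi_i$ to $A^{**}$, i.e. $\varphi_i$ itself viewed as an element of $A^{***}$ restricted to the natural copy of $A^{**}$; more precisely $\varphi_i^{**}=\varphi_i$ under the canonical identification, so $\varphi_i^{**}$ is just weak$^*$-continuous on $A^{**}$.

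First I would check that $W$ really is the Arens extension $V^{**}$. By Lemma \ref{l extensions real forms}, $V^{**}$ is the unique separately weak$^*$-continuous bilinear extension of $V$, so it suffices to verify that $W$ is separately weak$^*$-continuous and restricts to $V$ on $A\times A$. The restriction is immediate. For separate weak$^*$-continuity, note that $A^{**}$ is a commutative real von Neumann algebra, hence (as in the complex case) multiplication $A^{**}\times A^{**}\to A^{**}$ is separately weak$^*$-continuous, as is the involution; composing with the weak$^*$-continuous functionals $\varphi_i$ shows each of $(x,y)\mapsto\varphi_i^{**}(xy)$ and $(x,y)\mapsto\varphi_i^{**}(xy^*)$ is separately weak$^*$-continuous, and hence so is $W$. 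By uniqueness, $V^{**}=W$.

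It then remains to show $W$ is orthogonal. Let $a,b\in A^{**}$ with $a\perp b$, i.e. (commutativity) $ab=0$. Since $A^{**}$ is a commutative real von Neumann algebra, the range projection $r(b)\in A^{**}$ satisfies $r(b)b=b$ and $ar(b)=0$; equivalently $a b^*=0$ and $a^* b=0$ already give $ab=0=ab^*$ by taking adjoints and using commutativity, so in fact $ab=ab^*=0$ directly. (Concretely: $a\perp b$ means $ab^*=b^*a=0$; commutativity then forces $ab^*=0$, and applying $\tau$ or taking the relation $\overline{ab^*}{}^*$ together with commutativity yields $ab=0$ as well — this is exactly the equivalence noted in the text just before Lemma \ref{l spectral resolution for skew symmetric 1}, valid verbatim in $A^{**}$.) Hence $W(a,b^*)=\varphi_1^{**}(ab^*)+\varphi_2^{**}(ab)=\varphi_1^{**}(0)+\varphi_2^{**}(0)=0$, so $W=V^{**}$ is orthogonal, as required.

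The only point requiring a little care is the separate weak$^*$-continuity of multiplication and involution on $A^{**}$; this is where one invokes that $A^{**}$ is a real von Neumann algebra (equivalently, combines Lemma \ref{l extensions real forms} applied to the product map $A\times A\to A$ with the fact that $A^{**}$ carries a canonical von Neumann algebra structure in which the weak$^*$ topology coincides with the one used for Arens extensions). Given the machinery already developed in the excerpt, even this is routine, so the corollary follows with essentially no new work beyond Theorem \ref{t orhtogonal forms real abelian}.
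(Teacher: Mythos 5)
Your proof is correct and is essentially the argument the paper intends: the corollary is left as an immediate consequence of Theorem \ref{t orhtogonal forms real abelian}, obtained exactly as you do by transporting the representation $V(x,y)=\varphi_1(xy)+\varphi_2(xy^*)$ to $A^{**}$ via the uniqueness and separate weak$^*$ continuity of the Arens extension, and then using that in the commutative real von Neumann algebra $A^{**}$ orthogonality of $a,b$ forces $ab=ab^*=0$. The only blemish is the garbled parenthetical justification of $ab^*=0\Leftrightarrow ab=0$ (the automorphism $\tau$ is irrelevant here; the clean argument is $\|ab\|^2=\|ab(ab)^*\|=\|ab^*(ab^*)^*\|=\|ab^*\|^2$ using commutativity), but the claim itself is the one the paper records and uses.
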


Clearly, the statement of the above Theorem \ref{t orhtogonal forms real abelian} doesn't hold for bilinear forms on a commutative (complex) C$^*$-algebra. The real version established in this paper is completely independent to the result proved by K. Ylinen for commutative complex C$^*$-algebras in \cite{Yl75} and \cite{Gold}. It seems natural to ask whether the real result follows from the complex one by a mere argument of complexification. Our next example shows that the (canonical) extension of an orthogonal form on a commutative real C$^*$-algebra need not be an orthogonal form on the complexification.

\begin{example}\label{exam complexification} Let
$K=\{t_1,t_2\}.$ We define $\sigma:K\rightarrow K$ by
$\sigma(t_1)=t_2.$ Let $A=C(K)^{\tau}$ be the real C$^*$-algebra whose complexification is $C(K)$ and let $V:A\times A\to \mathbb{R},$ be the orthogonal form defined by $V(x,y)=\phi_{_{t_1}}(xy^*) =\Re\hbox{e}(x(t_1)\overline{y(t_1)})=\Re\hbox{e}(x(t_1)y(t_2)), $ where $\phi_{t_1} =\Re\hbox{e}(\delta_{_{t_1}}).$ In this case, the canonical complex bilinear extension
$\widetilde{V} : C(K)\times C(K) \to \mathbb{C}$ is given by $\widetilde{V} (x,y)= \phi_{_{t_1}} (x\tau(y)^*)=x(t_1)y(t_2)$ $(x,y\in C(K)).$ It is clear that $\chi_{_{t_1}}\perp \chi_{_{t_2}}$ in $C(K),$ however $\widetilde{V} (\chi_{_{t_1}},\chi_{_{t_2}})=1\neq 0,$ which implies that $\widetilde{V}$ is not orthogonal.
\end{example}

The (complex) bilinear extension of an orthogonal form $V$ on a real C$^*$-algebra to its complexification is orthogonal
precisely when $V$ satisfies the generic form of an orthogonal form on a (complex) C$^*$-algebra given by the main result in \cite{Gold}.

\begin{corollary}\label{c orhtogonal forms complexification}
Let $V: A\times A \to \mathbb{R}$ be a continuous orthogonal form on a commutative real C$^*$-algebra, let $B$ denote the
complexification of $A$ and let $\widetilde{V} : B\times B \to \mathbb{R}$ be the (complex) bilinear extension of $V$.
Then the form $\widetilde{V}$ is orthogonal if, and only if, $V$ writes in the form $V(x,y) = \varphi_1 (x y)$ $(x,y\in A),$
where $\varphi_1$ is a functional in $A^{*}$.
\end{corollary}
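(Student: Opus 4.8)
The plan is to read both implications off the classical structure theory for orthogonal forms on \emph{complex} commutative C$^{*}$-algebras, transferring the information between $A$ and $B$ through the isometric identification $A^{*}=(B^{*})^{\widetilde{\tau}}$ recalled in the preliminaries. Here $\widetilde V$ is the unique $\mathbb{C}$-bilinear form on $B\times B$ whose restriction to $A\times A$ is $V$; it is bounded (since $\tau$ is an isometry of $B$ one has $\|a\|\leq\|a+ib\|$ for all $a,b\in A$, and hence $\|\widetilde V\|\leq 4\|V\|$), and since $B$ is a commutative complex C$^{*}$-algebra, Ylinen's theorem (\cite{Yl75}; cf.\ also \cite{Gold}) applies to $\widetilde V$ as soon as the latter is known to be orthogonal: it then produces a functional $\Phi\in B^{*}$ with $\widetilde V(x,y)=\Phi(xy)$ for all $x,y\in B$. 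I shall also use that orthogonality of two elements $a,b$ of the commutative algebra $B$ amounts to $ab^{*}=0$.

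Suppose first that $V(x,y)=\varphi_{1}(xy)$ for some $\varphi_{1}\in A^{*}$, and let $\widehat{\varphi_{1}}\in(B^{*})^{\widetilde{\tau}}\subseteq B^{*}$ be the functional corresponding to $\varphi_{1}$ under the identification $A^{*}=(B^{*})^{\widetilde{\tau}}$, so that $\widehat{\varphi_{1}}|_{A}=\varphi_{1}$. The form $(x,y)\mapsto\widehat{\varphi_{1}}(xy)$ is $\mathbb{C}$-bilinear on $B\times B$, and since $xy\in A$ whenever $x,y\in A$ it agrees with $V$ on $A\times A$; as a $\mathbb{C}$-bilinear form on $B\times B$ is determined by its restriction to $A\times A$, we get $\widetilde V(x,y)=\widehat{\varphi_{1}}(xy)$ for all $x,y\in B$. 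Consequently, whenever $a\perp b$ in $B$ we have $\widetilde V(a,b^{*})=\widehat{\varphi_{1}}(ab^{*})=\widehat{\varphi_{1}}(0)=0$, so $\widetilde V$ is orthogonal.

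Conversely, assume $\widetilde V$ is orthogonal, and let $\Phi\in B^{*}$ be as above, so $\widetilde V(x,y)=\Phi(xy)$ for all $x,y\in B$. Restricting to $A$, and using that $\widetilde V|_{A\times A}=V$ is $\mathbb{R}$-valued while $xy\in A$ for $x,y\in A$, we obtain $V(x,y)=\Phi(xy)=\Re\hbox{e}(\Phi(xy))$; hence the functional $\varphi_{1}:A\to\mathbb{R}$, $\varphi_{1}(x):=\Re\hbox{e}(\Phi(x))$, belongs to $A^{*}$ and satisfies $V(x,y)=\varphi_{1}(xy)$, which is the asserted representation. (Equivalently, $\frac{1}{2}(\Phi+\widetilde{\tau}(\Phi))$ lies in $(B^{*})^{\widetilde{\tau}}=A^{*}$ and may be taken as $\varphi_{1}$.) I do not expect any genuine obstacle in this argument: the whole structural input is imported from Ylinen's theorem, while Theorem \ref{t orhtogonal forms real abelian} is what makes the dichotomy ``$\varphi_{2}$ can be taken to be $0$'' meaningful. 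The only steps requiring a little care are the passage from a complex functional on $B$ to a real functional on $A$ --- precisely what the isometry $A^{*}=(B^{*})^{\widetilde{\tau}}$ provides --- and, in the first implication, the routine check that the $\mathbb{C}$-bilinear extension of $(x,y)\mapsto\varphi_{1}(xy)$ equals $(x,y)\mapsto\widehat{\varphi_{1}}(xy)$.
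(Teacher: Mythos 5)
Your argument is correct and follows essentially the same route as the paper: the forward implication extends $\varphi_1$ to a $\widetilde{\tau}$-invariant functional on $B$ and identifies the resulting $\mathbb{C}$-bilinear form with $\widetilde{V}$, while the converse invokes Ylinen/Goldstein on the commutative complex C$^*$-algebra $B$ and passes back to $A^*$ via the identification $A^{*}=(B^{*})^{\widetilde{\tau}}$. The only (harmless) cosmetic difference is that you take $\varphi_1=\Re\hbox{e}\,\Phi|_{A}$ directly, whereas the paper first observes that $\Phi$ is already real-valued on $A$, hence $\widetilde{\tau}$-fixed.
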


\begin{proof}
Let $\tau$ be the period-2 $^*$-automorphism on $B$ satisfying that $B^{\tau}= B$ and let $\widetilde{\tau}: B^{*} \to B^{*}$
be the involution defined by $\widetilde{\tau} (\phi) (b) = \overline{\phi(\tau(b))}$.

Suppose $\widetilde{V}$ is orthogonal. By the main result in \cite{Gold} (see also \cite{Yl75}), there exists $\phi\in B^*$ satisfying
$\widetilde{V}(x,y) = \phi (x y),$ for every $x,y\in B$. Since $\widetilde{V}$ is an extension of $V$, we get
$V(a,b) = \Re\hbox{e} \phi (a b)= \phi (a b),$ for every $a,b\in A$. In particular, $\phi (a) \in \mathbb{R}$, for every $a\in A$ and hence
$\widetilde{\tau} (\phi)= \phi$ lies in $(B^*)^{\widetilde{\tau}} \equiv A^*$.

Let us assume that $V$ writes in the form $V(x,y) = \varphi_1 (x y)$ $(x,y\in A)$, where $\varphi_1$
is a functional in $A^{*}$. The functional $\varphi_1$ can be regarded as an element in $B^*$ satisfying
$\widetilde{\tau} (\varphi_1)= \varphi_1.$ It is easy to check that $\widetilde{V}(x,y) = \varphi_1 (x y),$
for every $x,y\in B$.
\end{proof}

\section{Orthogonality preservers between commutative real C$^*$-algebras}

Throughout this section, $A_1 = C(K_1)^{\tau_1}$ and $A_2 = C(K_2)^{\tau_2}$ will denote two unital commutative real C$^*$-algebras, $K_1$ and $K_2$ will be two compact Hausdorff spaces and $\tau_i$ will denote a conjugate-linear period-2 $^*$-automorphism on $C(K_i)$ given by $\tau_i (f) (t)= \overline{f(\sigma_i (t))}$ ($t\in K_i$, $f\in C(K_i)$),  where $\sigma_i : K_i \to K_i$ is a period-2 homeomorphism. We shall write $B_1 = C(K_1)$ and $B_2 = C(K_2)$ for the corresponding complexifications of $A_1 $ and $A_2$, respectively.

By the Banach-Stone theorem, every surjective isometry $T:C(K_1) \to C(K_2)$ is a composition operator, that is, there exist a unitary element $u$ in $C(K_2)$ and a homeomorphism $\sigma: K_2\to K_1$ such that $T(f)
(t) = (uC_{\sigma})(f)(t) := u(t) \ f (\sigma (t))$ ($t\in K_2$, $f\in C(K_1)$). This result led to the study of the so-called Banach-Stone theorems in different classes of Banach spaces containing $C(K)$-spaces, in which their algebraic and geometric properties are mutually determined. That is the case of general C$^*$-algebras (R. Kadison \cite{Kad} and Paterson and Sinclair \cite{PaSi}), JB- and JB$^*$-algebras (Wright and M. Youngson \cite{WriYo} and Isidro and A. Rodr\'{\i}guez \cite{IsRo95}), JB$^*$-triples (Kaup \cite{Ka} and Dang, Friedman and Russo \cite{DaFriRu}), real C$^*$-algebras (Grzesiak \cite{Grez}, Kulkarni and Arundhathi \cite{KulAn}, Kulkarni and Limaye \cite{KulLim} and  Chu, Dang, Russo and Ventura \cite{ChuDaRuVen}) and real JB$^*$-triples (Isidro, Kaup and Rodríguez \cite{IsKaRo95}, Kaup \cite{Ka97} and Fernández-Polo, Martínez and Peralta \cite{FerMarPe04}). In what concerns us, we highlight that any surjective linear isometry $T: C(K_1)^{\tau_1}\to C(K_2)^{\tau_2}$ is a composition operator given by a homeomorphism $\phi: K_2 \to K_1$ which satisfies $\sigma_1 \circ \phi=\phi\circ \sigma_2$ (cf. \cite{Grez} or \cite{KulAn} or  \cite[Corollary 5.2.4]{KulLim}).

The class of orthogonality preserving (continuous) operators between $C(K)$-spaces is strictly bigger than the class of surjective isometries. Actually, a bounded linear operator $T: C(K_1) \to C(K_2)$ is orthogonality preserving (equivalently, disjointness preserving) if, and only if, there exist $u$ in $C(K_2)$ and a mapping $\varphi: K_2\to K_1$ which is continuous on $\{t\in K_2 : u(t)\neq 0\}$ such that $T(f) (t)=(uC_{\varphi})(f)(t) = u(t) \ f (\varphi (t))$ (compare \cite[Example 2.2.1]{Arendt}).

Developing ideas given by E. Beckenstein, L. Narici, and A.R. Todd in \cite{BeckNarTodd} and \cite{BeckNarTodd88} (see also \cite{BeckNar}), K. Jarosz showed, in \cite{Jar}, that the above hypothesis of $T$ being continuous can be, in some sense, relaxed. More concretely, for every orthogonality preserving linear mapping $T: C(K_1) \to C(K_2)$, there exists a disjoint decomposition $K_2=S_1\cup S_2\cup S_3$ (with $S_2$ open, $S_3$ closed), and a continuous mapping $\varphi$ from $S_1\cup S_2$ into $K_1$ such that $T(f)(s)=\chi (s) f(\varphi(s))$ for all $s\in S_1$ (where $\chi$ is a continuous, bounded, non-vanishing, scalar-valued function on $S_1$), $T(f)(s)=0$  for all $s\in S_3$, $\varphi(S_2)$ is finite and, for each $s\in S_2$, the mapping $f\mapsto T(f)(s)$ is not continuous. As a consequence, every orthogonality preserving linear bijection between $C(K)$-spaces is (automatically) continuous. More recently, M. Burgos and the authors of this note prove, in \cite{BurGarPe}, that every bi-orthogonality preserving linear surjection between two von Neumann algebras (or between two compact C$^*$-algebras) is automatically continuous (compare \cite{OikPeRa}, \cite{OikPePu} for recent additional generalisations).

The main goal of this section is to describe the orthogonality
preserving linear mappings between $C(K)^{\tau}$-spaces. Among the consequences, we
establish that every orthogonality preserving linear bijection
between unital commutative real C$^*$-algebras is automatically
continuous. We shall provide an example of an orthogonality
preserving linear bijection between $C(K)^{\tau}$-spaces which is
not bi-orthogonality preserving
 and give a
characterisation of bi-orthogonality preserving linear maps.

We shall borrow and adapt some of the ideas developed in those previously mentioned papers (cf. \cite{BeckNarTodd,BeckNarTodd88} and \cite{Jar}). In order to have a good balance between completeness and conciseness, we just give some sketch of the refinements needed in our setting. In any case, the results presented here are independent innovations and extensions of those proved by Beckenstein, Narici, and Todd and Jarosz for $C(K)$-spaces.

Let $T:C(K_1)^{\tau_1} \to C(K_2)^{\tau_2}$ be an orthogonality preserving linear mapping. Keeping in mind the notation in the previous section, we write $L_i := \mathcal{O}_i\cup F_i,$ where $\mathcal{O}_i$ and $F_i$ are the subsets of $K_i$ given by Lemma \ref{l spectral resolution for skew symmetric 1}. The map sending each $f$ in  $C(Ki)^{\tau_i}$ to its restriction to $L_i$ is a C$^*$-isomorphism (and hence a surjective linear isometry) from
$C(Ki)^{\tau_i}$ onto the real C$^*$-algebra $C_r(L_i)$ of all continuous functions $f : L_i \to \mathbb{C}$ taking real values on $F_i$. Thus, studying orthogonality preserving linear maps between $C(K)^{\tau}$ spaces is equivalent to study orthogonality preserving linear mappings between the corresponding $C_r(L)$-spaces.

Henceforth, we consider an orthogonality preserving (not necessarily continuous) linear map $T:C_r(L_1) \to C_r(L_2)$, where $L_1$ and $L_2$ are two compact Hausdorff spaces and each $F_i$ is a closed subset of $L_i$. Let us consider the sets  $$Z_1= \{ s\in L_2 : {\delta}_{s} T \hbox{ is a non-zero bounded real-linear mapping} \},$$ $$Z_3 = \{s\in L_2 : {\delta}_{s} T =0 \}, \hbox{ and } Z_2 = L_2\backslash (Z_1\cup Z_3).$$ It is easy to see that $Z_3$ is closed. Following a very usual technique (see, for example, \cite{BeckNarTodd,BeckNarTodd88,Jar,Dubarbie10} and \cite{Dubarbie10b}), we can define a continuous support map $\varphi : Z_1\cup  Z_2 \to L_1$. More concretely, for each $s\in Z_1\cup  Z_2$, we write $\hbox{supp} (\delta_s T)$ for the set of all $t\in L_1$ such that for each open set $U\subseteq L_1$ with  $t\in U $ there exists $f\in C_r(L_1)$ with $\hbox{coz}(f)\subseteq U$ and $\delta_{s} (T(f)) \neq 0$. Actually, following a standard argument, it can be shown that, for each $s\in Z_1\cup  Z_2,$ $\hbox{supp} (\delta_s T)$ is non-empty and reduces exactly to one point $\varphi(s)\in L_1$, and the assignment $s\mapsto \varphi(s)$ defines a continuous map from $Z_1\cup  Z_2$ to $L_1$. Furthermore, the value of $T(f)$ at every $s\in  Z_1$ depends strictly on the value $f(\varphi(s))$. More precisely, for each $s\in Z_1$ with $\varphi(s)\notin F_1$, the value $T(g) (s)$ is the same for every function $g\in C_r(L_1)$ with $g\equiv i$ on a neighborhood of $\varphi (s).$ Thus, defining $T(i) (s) :=0$ for every $s\in Z_3\cup  Z_2$ and for every $s\in Z_1$ with $\varphi(s)\in F_1$, and $T(i) (s):=T(g)(s)$ for every $s\in Z_1\cup  Z_2$ with $\varphi(s)\notin F_1$, where $g$ is any element in $C_r(L_1)$ with $g\equiv i$ on a neighborhood of $\varphi (s),$ we get a (well-defined) mapping $T(i): L_2 \to \mathbb{C}.$ It should be noticed that ``$T(i)$'' is just a symbol to denoted the above mapping and not an element in the image of $T$. In this setting, the identity $$T(f) (s) = T(1) (s) \ \Re\hbox{e} f(\varphi(s)) + T(i) (s) \ \Im\hbox{m} f(\varphi(s)),$$ holds for every $s\in  Z_1.$ Clearly, $T(1) (s), T(i) (s)\in \mathbb{R}$, for every $s\in F_2$ and $\left|T(1) (s)\right|+ \left|T(i) (s)\right| \neq 0$, for every $s\in Z_1$.

The following property also follows from the definition of $\varphi$ by standard arguments: Under the above conditions,
let $s$ be an element in $Z_1\cup Z_2,$ then \begin{equation}\label{eq varphi notin cozero} \delta_s T (f) =0 \hbox{ for every } f\in C_r(L_1) \hbox{ with } \varphi(s)\notin \overline{\hbox{coz}(f)}.
\end{equation}

\begin{lemma}\label{l 3.1} The mapping $T(i)$ is bounded on the set $\varphi^{-1} (\mathcal{O}_1)$. Furthermore, the inequality $$\left|T(f)(s)\right| \leq \|T(1)\| + \sup_{\widetilde{s}\in \varphi^{-1} (\mathcal{O}_1)} |T(i)(\widetilde{s})|$$ holds for all $s\in Z_1$ and all $f\in C_r(L_1)$ with $|\Re\hbox{e} (f)|,|\Im\hbox{m} (f)| \leq 1.$
\end{lemma}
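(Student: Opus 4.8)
The plan is to prove first that $T(i)$ is bounded on $\varphi^{-1}(\mathcal{O}_1)$, by a gliding--hump argument, and then to obtain the displayed inequality directly from the pointwise formula $T(f)(s)=T(1)(s)\,\Re\hbox{e}\,f(\varphi(s))+T(i)(s)\,\Im\hbox{m}\,f(\varphi(s))$ valid for $s\in Z_1$.

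The first ingredient I would isolate is a \emph{compact estimate}: if $C\subseteq\mathcal{O}_1$ is compact, then, using the normality of $L_1$ and Urysohn's lemma, I can pick $g\in C_r(L_1)$ with $g\equiv i$ on a neighbourhood of $C$ and $\overline{\hbox{coz}(g)}\subseteq\mathcal{O}_1$ (such a $g$ does lie in $C_r(L_1)$ since its support misses $F_1$, which is precisely the role played by $\mathcal{O}_1$ here). By the very definition of $T(i)$ one has $T(i)(s)=T(g)(s)$ for all $s\in\varphi^{-1}(C)$, so $\sup_{\varphi^{-1}(C)}|T(i)|\le\|T(g)\|<\infty$: over compact subsets of $\mathcal{O}_1$, the ``formal'' value $T(i)(s)$ is the value of a genuine element of $C_r(L_2)$.

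Now suppose, towards a contradiction, that $\sup_{\varphi^{-1}(\mathcal{O}_1)}|T(i)|=\infty$, and construct recursively points $s_n\in\varphi^{-1}(\mathcal{O}_1)$, open sets $U_n\subseteq L_1$ and functions $h_n\in C_r(L_1)$ with $0\le h_n\le1$. Having produced $s_1,\dots,s_{n-1}$ and $U_1,\dots,U_{n-1}$ with $t_j:=\varphi(s_j)\in U_j$, $\overline{U_j}\subseteq\mathcal{O}_1$, $h_j\equiv1$ near $t_j$, $\overline{\hbox{coz}(h_j)}\subseteq U_j$, and with the $\overline{U_j}$ pairwise disjoint, the set $C_{n-1}:=\bigcup_{j<n}\overline{U_j}$ is compact in $\mathcal{O}_1$, so by the compact estimate and the contradiction hypothesis $\sup_{\varphi^{-1}(\mathcal{O}_1\setminus C_{n-1})}|T(i)|=\infty$; I pick $s_n$ in this set with $|T(i)(s_n)|\ge 4^n$. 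Since $t_n:=\varphi(s_n)\in\mathcal{O}_1\setminus C_{n-1}$, which is open, normality supplies an open $U_n\ni t_n$ with $\overline{U_n}\subseteq\mathcal{O}_1\setminus C_{n-1}$ (so $\overline{U_n}$ is disjoint from every earlier $\overline{U_j}$; with a little extra care, carrying along an auxiliary open set $W_j\supseteq\{t_j\}$ at each stage, one also keeps each $\overline{\hbox{coz}(h_m)}$ out of $W_j$ for $m\ne j$), and Urysohn's lemma a function $h_n\in C_r(L_1)$ with $0\le h_n\le1$, $h_n\equiv1$ near $t_n$ and $\overline{\hbox{coz}(h_n)}\subseteq U_n$. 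Put $f:=\sum_{n\ge1}2^{-n}\,i\,h_n$, which converges uniformly (Weierstrass $M$-test) to an element of $C_r(L_1)$ vanishing on $F_1$; by construction $f\equiv 2^{-n}i$ on a neighbourhood of $t_n$. Applying the definition of $T(i)$ at $s_n$ to the function $2^n f$ gives $2^n\,T(f)(s_n)=T(i)(s_n)$, hence $|T(f)(s_n)|=2^{-n}|T(i)(s_n)|\ge 2^n$ for all $n$ -- contradicting that $T(f)\in C_r(L_2)$ is bounded. Thus $M:=\sup_{\varphi^{-1}(\mathcal{O}_1)}|T(i)|<\infty$.

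Finally, for $s\in Z_1$ and $f\in C_r(L_1)$ with $|\Re\hbox{e}(f)|,|\Im\hbox{m}(f)|\le1$, the pointwise formula gives $|T(f)(s)|\le|T(1)(s)|\,|\Re\hbox{e}\,f(\varphi(s))|+|T(i)(s)|\,|\Im\hbox{m}\,f(\varphi(s))|\le\|T(1)\|+|T(i)(s)|\,|\Im\hbox{m}\,f(\varphi(s))|$. If $\varphi(s)\in F_1$ then $f(\varphi(s))\in\RR$ and the last term is $0$; if $\varphi(s)\in\mathcal{O}_1$ then $s\in\varphi^{-1}(\mathcal{O}_1)$ and $|T(i)(s)|\le M$. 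Either way $|T(f)(s)|\le\|T(1)\|+M$, as claimed. The main obstacle is the recursive construction, and more precisely arranging the bump functions so that $f$ is genuinely locally constant near each $t_n$ (this is where the normality of $L_1$ and the choice of the points inside the open set $\mathcal{O}_1$ are used); once $T(i)$ is known to be bounded, the stated inequality is immediate.
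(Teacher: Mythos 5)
Your proof is correct and follows essentially the same route as the paper: a proof by contradiction via a gliding-hump function $\sum_n c_n\, i\, h_n$ supported on pairwise disjoint open subsets of $\mathcal{O}_1$, whose images under $T$ would be unbounded at the points $s_n$, followed by the pointwise estimate from the representation formula on $Z_1$. The only differences are that you justify, via your ``compact estimate'', why the $s_n$ can be chosen with $\varphi$-images separated by disjoint open sets (a step the paper merely asserts), and that you spell out the case $\varphi(s)\in F_1$ in the final inequality; both are welcome refinements rather than deviations.
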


\begin{proof} Arguing by contradiction, we suppose that, for each natural $n$, there
exists $s_n \in \varphi^{-1} (\mathcal{O}_1)$ such that $\left|T(i) (s_n)\right| > n^3$.
The elements $s_n's$ can be chosen so that $\varphi(s_n) \neq \varphi(s_m)$  for $n \neq m$, and consequently
we can find a sequence of pairwise disjoint open subsets $(U_n)$ of $\mathcal{O}_1$ with $\varphi(s_n) \in U_n$.
It is easily seen that we can define a function $\displaystyle g = \sum_{n=1}^{\infty} i \ g_n \in C_r(L_1)$ with coz$(g_n) \subset U_n$, $0\leq g_ n \leq \frac{1}{n^2}$, and $g_n \equiv \frac{1}{n^2}$ on
a neighborhood of $s_n$, for all $n$. By the form of $g$, and since $T$ is orthogonality preserving, we have $|T(g)(s_n)| = n^2 |T(i) (s_n)| > n$ for all $n$, which is absurd.
\end{proof}

We can easily show now that $Z_2$ is an open subset of $L_2$. With this aim, we consider an element $s_0$ in $Z_2$. We can pick a function $f\in C_r(L_1)$ such that $\|f\|\leq 1$ and $$|T(f) (s_0)| > 1 + \|T(1)\| + \sup_{\widetilde{s}\in \varphi^{-1} (\mathcal{O}_1)} |T(i)(\widetilde{s})|.$$ Since $\displaystyle\left|T(f)(s)\right| \leq \|T(1)\| + \sup_{\widetilde{s}\in \varphi^{-1} (\mathcal{O}_1)} |T(i)(\widetilde{s})|< |T(f) (s_0)| - 1$, for every $s\in Z_1\cup Z_3,$ we conclude that there exists an open neighborhood of $s_0$ contained in $Z_2$.

The next theorem resumes the above discussion.

\begin{theorem}\label{t op between Cr(K)} In the notation above, let $T: C_r(L_1)\to C_r(L_2)$ be an orthogonality preserving linear mapping. Then $L_2$ decomposes as the union of three mutually disjoint subsets $Z_1, Z_2,$ and $Z_3$, where $Z_2$ is open and $Z_3$ is closed, there exist a continuous support map $\varphi : Z_1\cup  Z_2 \to L_1$, and a bounded mapping $T(i) : L_2 \to \mathbb{C}$ which is continuous on $\varphi^{-1} (\mathcal{O}_1)$ satisfying: $$T(i) (s)\in \mathbb{R} \ (\forall s\in F_2), \ T(i) (s)=0, \ (\forall s\in Z_3\cup Z_2 \hbox{ and } \forall s\in Z_1 \hbox{ with } \varphi(s) \in F_1),$$
\begin{equation}
\label{eq 1 thm 32}\left|T(1) (s)\right|+ \left|T(i) (s)\right| \neq 0,\ (\forall s\in Z_1),
\end{equation}
\begin{equation}
\label{eq 2 thm 32} T(f) (s) = T(1) (s) \ \Re\hbox{e} f(\varphi(s)) + T(i) (s) \ \Im\hbox{m} f(\varphi(s)), \hbox{ {\rm(}$\forall s\in Z_1, f\in C_r(L_1){\rm)},$}
\end{equation}
$$T(f) (s)=0, \hbox{ {\rm(}$\forall s\in Z_3, f\in C_r(L_1)${\rm),}}$$ and for each $s\in L_2$, the mapping $C_r(L_1) \to \mathbb{C}$, $f\mapsto T(f(s)),$ is unbounded if, and only if, $s\in Z_2$. Furthermore, the set $\varphi (Z_2)$ is finite.
\end{theorem}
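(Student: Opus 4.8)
The plan is to assemble the theorem from the ingredients already developed in the running discussion: the decomposition $L_2 = Z_1 \cup Z_2 \cup Z_3$, the support map $\varphi$, the auxiliary symbol $T(i)$, and Lemma~\ref{l 3.1}. Most of the required assertions have, in fact, already been verified in the paragraphs preceding the statement; the genuinely new content to be nailed down is the \emph{last} sentence: the finiteness of $\varphi(Z_2)$, together with the clean characterisation of $Z_2$ as the set of points at which $f\mapsto T(f)(s)$ is unbounded. So the bulk of the write-up will be a careful bookkeeping of what has been proved, followed by the one substantive argument.

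First I would recall and re-state, in one stroke, the facts established above: that $Z_3$ is closed (immediate from its definition), that $Z_2$ is open (proved just before the statement using the uniform bound of Lemma~\ref{l 3.1}), that $\varphi$ is a well-defined continuous map on $Z_1\cup Z_2$ via the one-point support argument, that $T(i)$ is a well-defined $\mathbb{C}$-valued map on $L_2$ which vanishes off $\varphi^{-1}(K_1\setminus F_1)\cap Z_1$, takes real values on $F_2$, is bounded on $\varphi^{-1}(\mathcal{O}_1)$ and continuous there (Lemma~\ref{l 3.1}), and that \eqref{eq 1 thm 32}, \eqref{eq 2 thm 32} and $T(f)(s)=0$ on $Z_3$ hold. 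Then the characterisation of $Z_2$: if $s\in Z_1$ then $\delta_s T$ is bounded by definition; if $s\in Z_3$ then $\delta_s T=0$ is certainly bounded; and if $s\in Z_2$ then $\delta_s T$ is unbounded, again by definition of $Z_2$ as the complement. This is purely a matter of unwinding definitions.

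The one real step is the finiteness of $\varphi(Z_2)$. Here I would argue by contradiction exactly in the spirit of the proof of Lemma~\ref{l 3.1}: suppose $\varphi(Z_2)$ is infinite, so one can pick a sequence $(s_n)\subset Z_2$ with the points $t_n:=\varphi(s_n)$ pairwise distinct. Using Hausdorffness one extracts pairwise disjoint open sets $U_n\ni t_n$ in $L_1$, and by \eqref{eq varphi notin cozero} the behaviour of $\delta_{s_n}T$ is governed only by functions cozero-supported near $t_n$. Since each $s_n\in Z_2$, the functional $\delta_{s_n}T$ is unbounded, so for each $n$ there is $f_n\in C_r(L_1)$ with $\overline{\mathrm{coz}(f_n)}\subset U_n$, $\|f_n\|\le 1$, and $|T(f_n)(s_n)| > n^3$ (rescaling $f_n$ to have norm $\le 1$ while the image at $s_n$ is large, which is possible precisely because $\delta_{s_n}T$ is not bounded on the norm-one ball — here one should be slightly careful and work with the restriction to functions supported in $U_n$, noting that $\delta_{s_n}T$ cannot be bounded on that subspace either, else by \eqref{eq varphi notin cozero} and a partition argument it would be bounded everywhere, contradicting $s_n\in Z_2$). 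Then $g:=\sum_n \frac{1}{n^2} f_n$ lies in $C_r(L_1)$ because the $U_n$ are disjoint and the series converges uniformly, and since $T$ is orthogonality preserving and $g$ agrees with $\frac{1}{n^2}f_n$ on a neighbourhood of $\varphi(s_n)$, one gets $|T(g)(s_n)| = \frac{1}{n^2}|T(f_n)(s_n)| > n$ for all $n$, contradicting $g\in C_r(L_1)$ and $T(g)\in C_r(L_2)$ bounded. Hence $\varphi(Z_2)$ is finite.

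The main obstacle I anticipate is the careful handling of ``$\delta_{s}T$ unbounded'' for $s\in Z_2$: one must convert this into the existence of norm-bounded test functions with large image, and in fact into test functions \emph{localised} near $\varphi(s)$, which requires combining the unboundedness with property \eqref{eq varphi notin cozero} (and, implicitly, with the fact that any $f$ can be split as $f = f\,h + f(1-h)$ for a suitable bump function $h$ near $\varphi(s)$, the second summand being killed by $\delta_s T$). Once that localisation is secured, the disjoint-supports construction is routine and mirrors Lemma~\ref{l 3.1} verbatim. I would therefore devote most of the proof's prose to that localisation point and treat the rest as a summary of the preceding discussion, writing something like ``collecting the facts established in the paragraphs above together with Lemma~\ref{l 3.1}, it only remains to prove that $\varphi(Z_2)$ is finite,'' and then giving the contradiction argument.
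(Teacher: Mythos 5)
Your proposal is correct and follows essentially the same route as the paper: the paper's proof likewise declares everything already substantiated by the preceding discussion except the finiteness of $\varphi(Z_2)$, and establishes that by the same contradiction argument (points $s_n\in Z_2$ with distinct $\varphi(s_n)$, pairwise disjoint neighbourhoods $U_n$, localised test functions $f_n$ with summable norms and $|\delta_{s_n}T(f_n)|>n$, whose sum $f$ lies in $C_r(L_1)$ while orthogonality preservation forces $|T(f)(s_n)|\geq|T(f_n)(s_n)|>n$). Your extra care in justifying that $\delta_{s_n}T$ remains unbounded on functions cozero-supported in $U_n$, via $(\ref{eq varphi notin cozero})$ and a bump-function splitting, is a detail the paper leaves implicit; the only other (harmless) difference is your normalisation $\|f_n\|\le 1$ with a subsequent $1/n^2$ rescaling in place of the paper's direct choice $\|f_n\|\le 1/n$.
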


\begin{proof}
Everything has been substantiated except perhaps the statement concerning the set $\varphi (Z_2)$. Arguing by contradiction, we assume the existence of a sequence $(s_n)$ in $Z_2$ such that $\varphi (s_n)\neq \varphi (s_m)$ for every $n\neq m$. Find a sequence $(U_n)$ of mutually disjoint open subsets of $L_1$ satisfying $\varphi(s_n) \in U_n$ and a sequence $(f_n)\subseteq C_r(L_1)$ such that $\|f_n\|\leq \frac1n$, $\hbox{coz} (f_n)\subseteq U_n$ and $|\delta_{s_n} T(f_n)| >n$, for every $n\in \mathbb{N}$. The element $\displaystyle f= \sum_{n=1}^{\infty} f_n$ lies in $C_r(L_1),$ and for each natural $n_0$, $\displaystyle{f_{n_0} \perp \sum_{n=1,n\neq n_{0}}^{\infty} f_n}.$ Thus, $|\delta_{s_{n_0}} T(f)| \geq |\delta_{s_{n_0}} T(f_{n_0})|>n_0,$ which is impossible.
\end{proof}

\begin{remark}{\rm
The mapping $T(i) : L_2 \to \mathbb{C}$ has been defined to satisfy $T(i) (s)=0,$  for all $s\in Z_3\cup Z_2$ and for all $s\in Z_1$ with $\varphi(s) \in F_1.$
It should be noticed here that the value $T(i)(s)$ is uniquely determined only when $s\in Z_1$ and $\varphi(s) \notin F_1$. There are some other choices for the values of $T(i)(s)$ at $s\in Z_3\cup Z_2$ and at $s\in Z_1$ with $\varphi(s) \in F_1$ under which conditions $(\ref{eq 1 thm 32})$ and $(\ref{eq 2 thm 32})$ are satisfied.}
\end{remark}

\begin{remark}\label{r consequences}{\rm
We shall now explore some of the consequences derived from Theorem \ref{t op between Cr(K)}.
Let $T: C_r(L_1)\to C_r(L_2)$ be an orthogonality preserving linear mapping.
\begin{enumerate}[$(a)$] \item The set $Z_3$ is empty whenever $T$ is surjective.
\item $Z_3=\emptyset$ implies that $Z_1= L_2\backslash Z_2$ is a compact subset of $L_2.$
\item $\varphi(Z_2)$ is a finite set of non-isolated points in $L_1$. Indeed, if $\varphi(s_0)=t_0$ is isolated for some $s_0\in Z_2$, then we can find an open set $U\subseteq L_1$ such that $U\cap K_1= \{t_0\}$. Therefore, for each $f\in C_r(L_1)$ with $f(t_0)=0$ we have $\delta_{s_0} T(f) = 0$. Pick an arbitrary $h\in C_r(L_1)$. Clearly, $\chi_{_{t_0}}\in C_r (L_1)$, while $i  \chi_{_{t_0}} $ lies in $C_r(L_1)$ if, and only if, $t_0\notin F_1$. Therefore, $$h_0 =\Re\hbox{e} (h(t_0)) \chi_{_{t_0}} + \Im\hbox{m} (h(t_0))  \  i \chi_{_{t_0}} $$ lies in $C_r(L_1)$ and $(h-h_0)(t_0)=0$.

    Assume first that $t_0\notin F_1$. Denoting $\lambda_0 = \delta_{s_0} T(\chi_{_{t_0}})$ and $\mu_{_0} = \delta_{s_0} T( i \chi_{_{t_0}}),$ we have $$\delta_{s_0} T(h) = \delta_{s_0} T(h_0)= \lambda_{0} \Re\hbox{e} (h(t_0)) +\mu_{_0} \Im\hbox{m} (h(t_0))$$ $$= \frac{\lambda_0- i \mu_{_0}}{2} \delta_{t_0} (h)+ \frac{\lambda_0+ i \mu_{_0}}{2} \overline{\delta_{t_0}} (h).$$ This shows that $\delta_{s_0} T = \frac{\lambda_0- i \mu_{_0}}{2} \delta_{t_0} + \frac{\lambda_0+ i \mu_{_0}}{2} \overline{\delta_{t_0}}$ is a continuous mapping from $C_r(L_1)$ to $\mathbb{C}$, which is impossible.

    When $t_0\in F_1$ we have $\delta_{s_0} T = \lambda_0 \delta_{t_0} $ is a continuous mapping from $C_r(L_1)$ to $\mathbb{R}$, which is also impossible.
\item $T$ surjective implies $\varphi (Z_1\cap \mathcal{O}_2) \subseteq \mathcal{O}_1$. Suppose, on the contrary that there exists $s_0\in Z_1\cap \mathcal{O}_2$ with $\varphi (s_0)\in F_1$. By $(\ref{eq 2 thm 32})$, $$T(f) (s_0) = T(1)(s_0) \Re\hbox{e} f(\varphi(s_0)),$$ for every $f\in C_r(L_1)$. It follows from the surjectivity of $T$, together with the condition $s_0\in \mathcal{O}_2,$ that for every complex number $\omega$ there exists a real $\lambda$ satisfying $\omega = T(1)(s_0) \lambda,$ which is impossible.
\item Suppose $T$ is surjective and fix $s_0\in Z_1\cap \mathcal{O}_2$. The mapping $\delta_{s_0} T$ is a bounded real-linear mapping from $C_r(L_1)$ onto $\mathbb{C}$. On the other hand, by $(\ref{eq 2 thm 32})$, $$\delta_{s_0} T(f) = T(1)(s_0) \Re\hbox{e} f(\varphi(s_0)) + T(i) (s_0) \Im\hbox{m} f(\varphi(s_0)), \ (\forall f\in C_r(L_1)).$$ Thus, $T$ being surjective implies that the space $\mathbb{C}_{\mathbb{R}} = \mathbb{R} \times \mathbb{R}$ is linearly spanned by the elements $T(1)(s_0)$ and $T(i)(s_0)$.  Therefore, for each $s_0\in Z_1\cap \mathcal{O}_2,$ the set $\{T(1)(s_0), T(i)(s_0)\}$ is a basis of $\mathbb{C}_{\mathbb{R}} = \mathbb{R} \times \mathbb{R}.$ Consequently, when $T$ is surjective and $s_0\in Z_1\cap \mathcal{O}_2,$ the condition $T(f) (s_0)= 0$ implies $f(\varphi(s_0))=0.$ For any other $s_1\in Z_1\cap \mathcal{O}_2$ with $\varphi (s_0) = \varphi (s_1)$, we have: $$T(f)(s_0)=0 \Rightarrow f(\varphi(s_0))=0 \Rightarrow T(f)(s_1)=0.$$ The fact that $C_r(L_2)$ separates points implies that $s_1 = s_0$. Thus, $\varphi$ is injective on $Z_1\cap \mathcal{O}_2$.
\end{enumerate}}
\end{remark}

 We can now state the main result of this section which affirms that every orthogonality preserving
linear bijection between unital commutative real C$^*$-algebras is (automatically) continuous.

\begin{theorem}\label{t aut cont} Every orthogonality preserving linear bijection between unital commutative (real)
C$^*$-algebras is (automatically) continuous.
\end{theorem}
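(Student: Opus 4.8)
The plan is to reduce, as in the preceding discussion, to an orthogonality preserving linear bijection $T\colon C_r(L_1)\to C_r(L_2)$ and to invoke Theorem~\ref{t op between Cr(K)} together with the consequences collected in Remark~\ref{r consequences}. Since $T$ is surjective, part~$(a)$ of that remark gives $Z_3=\emptyset$, so by part~$(b)$ the set $Z_1=L_2\setminus Z_2$ is compact, and $Z_2$ is open. The whole difficulty is therefore concentrated in showing that $Z_2=\emptyset$; once this is known, $T(f)(s)=T(1)(s)\,\Re\hbox{e}f(\varphi(s))+T(i)(s)\,\Im\hbox{m}f(\varphi(s))$ holds for \emph{every} $s\in L_2$ with $T(1),T(i)$ bounded, $\varphi$ continuous on all of $L_2$, whence $\|T(f)\|\le(\|T(1)\|+\sup|T(i)|)\|f\|$ by Lemma~\ref{l 3.1}, i.e.\ $T$ is bounded.

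To prove $Z_2=\emptyset$ I would argue by contradiction, following the Beckenstein--Narici--Todd/Jarosz strategy adapted to the real setting. Suppose $s_0\in Z_2$ and put $t_0=\varphi(s_0)$; by Remark~\ref{r consequences}$(c)$, $t_0$ is a non-isolated point of $L_1$, and $\varphi(Z_2)$ is finite, so we may shrink to a neighbourhood situation around $t_0$. The key is to exploit surjectivity of $T$ to manufacture, for each $n$, a function $f_n\in C_r(L_1)$ with small norm but with $|\delta_{s_0}T(f_n)|$ large, while simultaneously controlling the behaviour of $T(f_n)$ on $Z_1$. Concretely: since $t_0$ is not isolated one can choose a sequence of nonzero functions $g_n\in C_r(L_1)$ with pairwise disjoint cozero sets accumulating at $t_0$ and $\|g_n\|\to 0$; using that $\delta_{s_0}T$ is \emph{unbounded} one picks coefficients so that $|\delta_{s_0}T(\sum g_n)|$ diverges along partial sums, while the disjointness of the cozero sets and the orthogonality preservation keep $T(\sum g_n)$ controlled pointwise on $Z_1$ (each $s\in Z_1$ sees at most finitely many, in fact essentially one, of the $g_n$ through $\varphi$). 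One then uses surjectivity to correct: solve $T(h)=\big(\text{the desired bounded target}\big)$ and compare, deriving that $\delta_{s_0}T$ must in fact be bounded on a suitable cone of test functions --- the contradiction. Alternatively, and perhaps more cleanly, one shows $\varphi$ restricted to $Z_1$ is surjective onto $L_1\setminus\varphi(Z_2)$ up to a finite set, so that a point $t_0\in\varphi(Z_2)$ would be approached by points $\varphi(s)$, $s\in Z_1$, forcing $\delta_{s_0}T$ to be a limit of the uniformly bounded functionals $\delta_s T$ ($s\in Z_1$, $\|\delta_sT\|\le\|T(1)\|+\sup|T(i)|$ by Lemma~\ref{l 3.1}), hence bounded, contradicting $s_0\in Z_2$.

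The main obstacle is precisely this last step --- ruling out the ``singular'' set $Z_2$ using only bijectivity (not continuity) of $T$. In the complex $C(K)$ case this is exactly Jarosz's argument, and the real case needs the extra bookkeeping that at points of $F_1$ only the real part of $f(\varphi(s))$ is seen (so $\delta_sT$ maps into $\mathbb{R}$ there) while on $\mathcal O_1$ both $T(1)(s)$ and $T(i)(s)$ matter and, by Remark~\ref{r consequences}$(d),(e)$, form a basis of $\mathbb{C}_{\mathbb{R}}$ and $\varphi$ is injective on $Z_1\cap\mathcal O_1$ respectively; these facts are what let one transplant functions freely through $\varphi$. I would organise the final contradiction by combining: injectivity of $\varphi$ on $Z_1\cap\mathcal O_1$, finiteness of $\varphi(Z_2)$, the basis property on $\mathcal O_2$, and the uniform bound of Lemma~\ref{l 3.1}, to show that if $Z_2\ne\emptyset$ then some $\delta_{s_0}T$ with $s_0\in Z_2$ is after all a norm limit of uniformly bounded functionals and hence bounded --- contradicting $s_0\notin Z_1$. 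Having disposed of $Z_2$, continuity of $T$ is immediate from Lemma~\ref{l 3.1} as indicated above.
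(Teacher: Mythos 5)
Your reduction and the easy half are right: $Z_3=\emptyset$ by surjectivity, $Z_1=L_2\setminus Z_2$ is compact, and once $Z_2=\emptyset$ is known the bound of Lemma~\ref{l 3.1} gives continuity. You have also correctly isolated the crux, namely $Z_2=\emptyset$. But neither of your two sketches for that step works. The ``alternative'' argument --- that $\delta_{s_0}T$ for $s_0\in Z_2$ is a limit of the uniformly bounded functionals $\delta_sT$, $s\in Z_1$ --- cannot succeed: $Z_2$ is \emph{open}, hence $Z_1$ is closed and no $s_0\in Z_2$ lies in $\overline{Z_1}$; and even when $\varphi(s_n)\to\varphi(s_0)$ with $s_n\in Z_1$, each $\delta_{s_n}T$ is controlled by the value $f(\varphi(s_n))$ alone, so any limit of such functionals is again bounded in terms of $f(\varphi(s_0))$ --- whereas the defining feature of $s_0\in Z_2$ is precisely that $\delta_{s_0}T$ is \emph{not} so controlled. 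The first sketch (``manufacture $f_n$ of small norm with $|\delta_{s_0}T(f_n)|$ large, then use surjectivity to correct'') is not an argument as stated: the ``correction'' is exactly the missing content, and in fact the ingredient actually needed at this point is \emph{injectivity}, not surjectivity.

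The step is closed in the paper as follows. Since $Z_2$ is open and assumed nonempty, pick $0\neq g\in C_r(L_2)$ with $\mathrm{coz}(g)\subset Z_2$ and set $h=T^{-1}(g)$, so $T(h)$ vanishes on $Z_1$. Given $t\in\varphi(Z_1)\setminus\varphi(Z_2)$, separate $t$ from the finite set $\varphi(Z_2)$ by disjoint open sets and choose a \emph{real-valued} $f$ with $f(t)\neq0$ and $\overline{\mathrm{coz}(f)}$ disjoint from $\varphi(Z_2)$; then $T(fh)(s)=f(\varphi(s))\,T(h)(s)=0$ for $s\in Z_1$ by the representation $(\ref{eq 2 thm 32})$ (real-valuedness of $f$ is what makes $fh$ and $f(\varphi(s))h$ interchangeable there), and $T(fh)(s)=0$ for $s\in Z_2$ because $\varphi(s)\notin\overline{\mathrm{coz}(fh)}$, by $(\ref{eq varphi notin cozero})$. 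Injectivity forces $fh=0$, hence $h(t)=0$. Combined with $\varphi(Z_1)=L_1$ (also a consequence of injectivity via $(\ref{eq varphi notin cozero})$), this yields $\mathrm{coz}(h)\subseteq\varphi(Z_2)$, a finite set whose points are non-isolated by Remark~\ref{r consequences}$(c)$, forcing $h=0$ and so $g=0$ --- a contradiction. Your proposal, as written, stops short of supplying any such argument for the decisive step.
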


\begin{proof}
Since $T$ is surjective, $Z_3=\emptyset$, and hence $Z_1= L_2\backslash Z_2$ is a compact subset of $L_2.$ It is also clear that $\varphi (L_2)$ is compact. We claim that $\varphi (L_2) = L_1.$ Otherwise, there would exist a non-zero function $f\in C_r(L_1)$ with $\overline{\hbox{coz}(f)}\subseteq L_1\backslash \varphi (L_2)$. Thus, by $(\ref{eq varphi notin cozero})$, $T(f) =0,$ contradicting the injectivity of $T$. By Remark \ref{r consequences}$(c)$, $\varphi (Z_1)=\overline{\varphi (Z_1)}= \varphi(L_2) = \varphi (Z_1)\cup \varphi (Z_2) = L_1.$

We next see that $Z_2=\emptyset.$ Otherwise we can take $g\in C_r(L_2)$ with $\emptyset \neq coz (g)\subset Z_2.$ Let $h=T^{-1}(g).$ Obviously $Th(s)=0$ whenever $s\in Z_1.$ We claim that $h(t)=0,$ for every $t\in \varphi(Z_1)\setminus \varphi(Z_2).$ Let us fix $t\in \varphi(Z_1)\setminus \varphi(Z_2).$ Since $\varphi(Z_2)$ is a finite set there are disjoint open sets $U_1,U_2$ such that $t\in U_1,\varphi(Z_2)\subset U_2.$ Let $f\in C(L_1,\RR)$ be such that $f(t)\neq 0$ and $\overline{coz(f)}\subset U_1.$ We see that $T(fh)=0.$ Indeed, let $s\in L_2=Z_1 \cup Z_2.$ If $s$ lies in $Z_1,$ then the maps $fh$ and $f(\varphi(s))h$ lie in $C_r(L_1)$ and coincide at $\varphi(s).$ Since $T$ is linear over $\RR$ and $f$ takes real values, we deduce, by $(\ref{eq 2 thm 32})$, that   $T(fh)(s)=f(\varphi(s))Th(s)=0.$ If $s\in Z_2$ then, since $\varphi(s) \notin \overline{coz(fh)}$, then $\delta_s T(fh)=T(fh)(s)=0$.

We have shown that $T(fh)=0$. Thus, since $T$ is injective, $fh=0$ and therefore $h(t)=0.$ We have therefore proved that
$coz(h)\subset \varphi(Z_2)$ which is a finite set. This means that $h$ must be a finite linear combination of characteristic function on points of $\varphi(Z_2)$ and these points must be isolated which is impossible, since by $c)$ in Remark \ref{r consequences} no point in $\varphi(Z_2)$ can be isolated. We have proved that $Z_2=\emptyset.$ Now the fact that $T$ is continuous follows easily.
\end{proof}

The above theorem is the first step toward extending, to the real
setting, those results proved in \cite{Jar}, \cite{ArauJar}, \cite{BurGarPe}, \cite{OikPeRa}, \cite{LeTsaWon} and \cite{Tsa} for (complex) C$^*$-algebras.\smallskip

Orthogonality preserving linear bijections enjoy an interesting additional property.

\begin{proposition}\label{p inverse preserves invertible} In the notation of this section, let $T: C_r(L_1)\to C_r(L_2)$ be an orthogonality preserving linear bijection. Then $T^{-1}$ preserves invertible elements, that is, $T^{-1} (g)$ is invertible whenever $g$ is an invertible element in $C_r(L_2)$.
\end{proposition}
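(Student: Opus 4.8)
The plan is to use the structural description of $T$ from Theorem~\ref{t op between Cr(K)} together with the consequences collected in Remark~\ref{r consequences}, which apply since $T$ is bijective. From those facts we know $Z_3 = \emptyset$, the support map $\varphi$ is surjective onto $L_1$, and (by the argument in the proof of Theorem~\ref{t aut cont}) in fact $Z_2 = \emptyset$, so that $T$ is continuous and $\varphi : L_2 \to L_1$ is a continuous surjection with $\varphi$ injective on $Z_1 \cap \mathcal{O}_2 = L_2\cap \mathcal{O}_2$ by Remark~\ref{r consequences}$(e)$. Moreover, for $s\in \mathcal{O}_2$ the pair $\{T(1)(s), T(i)(s)\}$ is a basis of $\mathbb{C}_{\mathbb{R}}$, while for $s\in F_2$ we have $T(1)(s)\in\mathbb{R}\setminus\{0\}$ (nonzero because $|T(1)(s)|+|T(i)(s)|\neq 0$ and $T(i)(s)=0$ on the part of $F_2$ where $\varphi(s)\in F_1$; and $\varphi(F_2)\subseteq F_1$ must be checked, or handled directly).

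First I would fix an invertible $g \in C_r(L_2)$ and set $h = T^{-1}(g)$, the goal being to show $h$ never vanishes on $L_1$. Since $\varphi$ is surjective, it suffices to show $h(\varphi(s)) \neq 0$ for every $s \in L_2$. Using \eqref{eq 2 thm 32}, for $s \in \mathcal{O}_2$ we have $g(s) = T(h)(s) = T(1)(s)\,\Re h(\varphi(s)) + T(i)(s)\,\Im h(\varphi(s))$; since $g(s) \neq 0$ and $\{T(1)(s), T(i)(s)\}$ is an $\mathbb{R}$-basis of $\mathbb{C}_{\mathbb{R}}$, the coordinate vector $(\Re h(\varphi(s)), \Im h(\varphi(s)))$ is nonzero, i.e. $h(\varphi(s)) \neq 0$. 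For $s \in F_2$, \eqref{eq 2 thm 32} together with $T(i)(s)=0$ gives $g(s) = T(1)(s)\,\Re h(\varphi(s))$; since $g(s)\in\mathbb{R}$ is nonzero and $T(1)(s)$ is a nonzero real, $\Re h(\varphi(s)) \neq 0$, and as $\varphi(s)\in F_1$ forces $h(\varphi(s))\in\mathbb{R}$, again $h(\varphi(s)) \neq 0$. Ranging over all $s \in L_2$ and invoking surjectivity of $\varphi$, we conclude $h(t) \neq 0$ for all $t \in L_1$, which is exactly invertibility of $h = T^{-1}(g)$ in $C_r(L_1)$.

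The main obstacle I anticipate is the bookkeeping on $F_2$: one must be sure that $T(1)(s) \neq 0$ for every $s \in F_2$ and that $T(i)(s) = 0$ there, and that $\varphi(s) \in F_1$ whenever $s \in F_2$ (so that $h(\varphi(s))$ is automatically real). The last point is the delicate one — if $s\in F_2$ but $\varphi(s)\in\mathcal{O}_1$, then by \eqref{eq 2 thm 32} and $T(i)(s)=0$ the value $T(f)(s)$ would only see $\Re f(\varphi(s))$, and one can then build two functions in $C_r(L_1)$ agreeing in real part but not in imaginary part at $\varphi(s)$ whose difference is killed by $T$ at $s$; iterating over a neighborhood and using injectivity of $T$ yields a contradiction, so in fact $\varphi(F_2)\subseteq F_1$. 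Once this compatibility of $\varphi$ with the decompositions $L_i = \mathcal{O}_i \cup F_i$ is secured, the rest of the argument is a routine case split as above. A clean alternative, avoiding a separate treatment of $F_2$, is to observe that the restriction of $h=T^{-1}(g)$ to $L_1 \cap \mathcal{O}_1$ is already shown nonzero by the $\mathcal{O}_2$-case applied along $\varphi$, extend by density/continuity of $h$, and then note that $h$ vanishing at some $t_0 \in F_1$ would, via \eqref{eq 2 thm 32}, force $g$ to vanish on the (nonempty, by surjectivity of $\varphi$) fiber $\varphi^{-1}(t_0)$, contradicting invertibility of $g$.
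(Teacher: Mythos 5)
Your overall route --- the representation formula \eqref{eq 2 thm 32} on $Z_1=L_2$ together with the surjectivity $\varphi(Z_1)=L_1$ --- is exactly the paper's, but the case split you perform on top of it introduces a genuine error. The claim you lean on for the $F_2$ branch, namely that $\varphi(F_2)\subseteq F_1$ (so that $T(i)(s)=0$ and $T(1)(s)\neq 0$ for $s\in F_2$), is \emph{false} for a general orthogonality preserving linear bijection: the paper's own Example \ref{example OP+bijection - BiOP} has $s_3,s_4\in F_2$ with $\varphi(s_3)=\varphi(s_4)=t_3\in\mathcal{O}_1$, and there $T(f)(s_4)=\Im\hbox{m}f(t_3)$, so $T(1)(s_4)=0$ and $T(i)(s_4)=1$. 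Your sketched contradiction argument (two functions agreeing in real part at $\varphi(s)$ whose difference is killed by $T$ at $s$) cannot succeed, precisely because the difference need not be killed at \emph{other} points of the fiber --- in the example $s_4$ recovers the imaginary part, so injectivity is untouched. The ``clean alternative'' is also broken: $\varphi(\mathcal{O}_2)$ need not cover $\mathcal{O}_1$ (in the same example $\varphi(\mathcal{O}_2)=\{t_1\}\subsetneq\{t_1,t_3\}=\mathcal{O}_1$), and non-vanishing does not propagate by ``density/continuity''.

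The irony is that none of this machinery is needed, and the last sentence of your alternative already contains the whole proof. For \emph{any} $s\in Z_1=L_2$, if $h(\varphi(s))=0$ then $\Re\hbox{e}\,h(\varphi(s))=\Im\hbox{m}\,h(\varphi(s))=0$, so \eqref{eq 2 thm 32} gives $g(s)=T(h)(s)=0$, contradicting invertibility of $g$; since $\varphi(Z_1)=L_1$, the function $h=T^{-1}(g)$ never vanishes and is invertible. This requires neither the basis property of $\{T(1)(s),T(i)(s)\}$ (which is a sufficient but unnecessary hypothesis even in your $\mathcal{O}_2$ case: a zero coordinate vector forces $g(s)=0$ regardless of linear independence), nor any compatibility of $\varphi$ with the decompositions $L_i=\mathcal{O}_i\cup F_i$. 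Delete the case split and apply that one observation uniformly and your argument becomes the paper's proof.
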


\begin{proof}
 Take an invertible element $g\in C_r(L_2)$. Let $f$ be the unique element in $C_r(L_1)$ satisfying $T(f) = g$. Theorem \ref{t op between Cr(K)} implies that $$0\neq g(s) =  T(f) (s)=  T(1)(s) \ \Re\hbox{e}f({\varphi (s)}) +  T(i) (s) \ \Im\hbox{m}f({\varphi (s)}),$$ for every $s\in Z_1$. This assures that $f({\varphi (s)})\neq 0,$ for every $s\in Z_1,$ and since $\varphi (Z_1) =L_1,$ $f=T^{-1} (g)$ must be invertible in $C_r(L_1)$.
\end{proof}

In the setting of complex Banach algebras, it follows from the Gleason-Kahane-\.{Z}elazko
theorem that a linear transformation $\phi$ from a unital, commutative, complex Banach algebra $A$ into $\mathbb{C}$ satisfying $\phi(1) =1$ and $\phi(a) \neq 0$ for every invertible element $a$ in $A$ is multiplicative, that is, $\phi (a b) = \phi (a) \phi(b)$ (see \cite{Glea,KaZe}). Although, the Gleason-Kahane-\.{Z}elazko theorem fails for real Banach algebras, S.H. Kulkarni found in \cite{Kul} the following reformulation: a linear map $\phi$ from a real unital Banach algebra ${A}$ into the complex numbers is multiplicative if $\varphi(1) =1$ and $\phi(a)^2+\phi(b)^2\neq 0$ for every $a, b\in{A}$ with $ab=ba$ and $a^2+b^2$ invertible. It is not clear that statement $(b)$ in the above proposition can be improved to get the hypothesis of Kulkarni's theorem. The structure of orthogonality preserving linear mappings between $C_r(L)$-spaces described in Theorem \ref{t op between Cr(K)} invites us to affirm that they are not necessarily multiplicative.\smallskip

\subsection{Bi-orthogonality preservers}

 As a consequence of the description of orthogonality preserving linear maps given in \cite{Jar}, it can be shown that an orthogonality preserving linear bijection between (complex) $C(K)$-spaces is
 bi-orthogonality preserving. It is natural to ask wether every orthogonality preserving linear bijection between commutative (unital) real C$^*$-algebras is bi-orthogonality preserving.

This is known to be true in two cases: first, between spaces $C_{\mathbb{R}}(K)$ of real (and also complex) valued functions on a compact Hausdorff space $K$, as it is well-known; second, between spaces of the type $C_{\mathbb{R}}(K;\mathbb{R}^n)$ (compare \cite[Section 3]{Dubarbie10}). Spaces like those we are dealing with in this paper need not satisfy this property, that is, there exists an orthogonality preserving linear bijection $T: C_r(L_1)\to C_r(L_2)$ which is not bi-orthogonality preserving (and even
$L_1$ and $L_2$ are not homeomorphic either).

\begin{example}\label{example OP+bijection - BiOP} Let $L_1 =
\{ t_1, t_2, t_3\}$ $L_2 = \{ s_1, s_2, s_3, s_4\}$ with $\mathcal{O}_1 = \{t_1, t_3\}$, $\mathcal{O}_2 = \{s_1\}$, $F_1 = \{ t_2\}$ and $F_2 = \{ s_2, s_3, s_4\}$. Define $\varphi : L_2 \to L_1$ by $\varphi (s_i) = t_i,$ for $i = 1, 2,$ and $\varphi (s_i) = t_3,$ for $i = 3, 4$. It is easy to check that $T(f)(s_i) =
f(\varphi(s_i))$ if $i = 1, 2$, and $T(f)(s_3) = \Re\hbox{e}f(t_3),$ $T(f)(s_4) = \Im\hbox{m}f(t_3)$ is an orthogonality preserving linear bijection, but $T^{-1}$ is not orthogonality preserving.
\end{example}

In the above example, $\varphi^{-1} (\mathcal{O}_1) \cap F_2 $ is non-empty.
Our next result shows that a topological condition on $F_2$
assures that an orthogonality preserving linear bijection between unital commutative real C$^*$-algebras
is bi-orthogonality preserving.

\begin{proposition}\label{p topological conditions}
In the notation of this section, let $T: C_r(L_1)\to C_r(L_2)$ be
an orthogonality preserving linear bijection (not assumed to be
bounded). The following statements hold:
\begin{enumerate}[$(a)$]
\item If $T$ is bi-orthogonality preserving then $\varphi : L_2 \to L_1$ is a (surjective) homeomorphism, $\varphi(F_2) = F_1,$ and $\varphi (\mathcal{O}_2) = \mathcal{O}_1$. In particular, $\varphi^{-1} (\mathcal{O}_1) \cap F_2=
 \emptyset$.
\item If $F_2$ has empty interior  then $T$ is biorthogonality preserving.\end{enumerate}
\end{proposition}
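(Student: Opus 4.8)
\textbf{Proof plan for Proposition \ref{p topological conditions}.}

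For part $(a)$, the plan is to exploit the symmetry afforded by bi-orthogonality preservation. First I would observe that $T^{-1}: C_r(L_2)\to C_r(L_1)$ is then itself an orthogonality preserving linear bijection, so Theorem \ref{t op between Cr(K)} applies to it, producing a support map $\psi: L_1 \to L_2$ (here $Z_3=\emptyset$ for both $T$ and $T^{-1}$ by Remark \ref{r consequences}$(a)$, so both support maps are globally defined). The key step is to show $\psi$ and $\varphi$ are mutually inverse. For $s\in L_2$ and a function $f$ vanishing on a neighbourhood of $\varphi(s)$ we have $T(f)(s)=0$; dually, $T^{-1}(g)(t)=0$ when $g$ vanishes near $\psi(t)$. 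Combining these with injectivity (using that $C_r$ separates points and that cozero sets can be chosen small, as in the proof of Theorem \ref{t aut cont}) forces $\psi(\varphi(s))=s$ and $\varphi(\psi(t))=t$; continuity of both maps (Theorem \ref{t op between Cr(K)}) then makes $\varphi$ a homeomorphism. Next I would pin down the behaviour on $F_i$ and $\mathcal{O}_i$: if $s\in F_2$ but $\varphi(s)\in \mathcal{O}_1$, then by $(\ref{eq 2 thm 32})$ together with $T(i)(s)=0$ (valid since $s\in F_2$ forces $\delta_s T$ to be $\mathbb{R}$-valued, hence this is one of the allowed choices) the functional $\delta_s T$ factors through $\Re\hbox{e} f(\varphi(s))$; dually $\delta_{\varphi(s)} T^{-1}$ would have to reconstruct the imaginary part at $\varphi(s)$, contradicting that $\delta_{\varphi(s)}T^{-1}$ depends only on $(T f)(\psi(\varphi(s)))=(Tf)(s)$. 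The same kind of dual argument, run with the roles of $T$ and $T^{-1}$ swapped and using Remark \ref{r consequences}$(d)$, excludes $s\in\mathcal{O}_2$ with $\varphi(s)\in F_1$. Hence $\varphi(F_2)\subseteq F_1$ and $\varphi(\mathcal{O}_2)\subseteq \mathcal{O}_1$, and applying the same to $\psi=\varphi^{-1}$ gives equality; in particular $\varphi^{-1}(\mathcal{O}_1)\cap F_2=\emptyset$.

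For part $(b)$, assume $F_2$ has empty interior and let $T$ be an orthogonality preserving linear bijection. By Theorem \ref{t aut cont}, $T$ is automatically continuous, and by the proof of that theorem $Z_2=Z_3=\emptyset$, so $Z_1=L_2$, $\varphi:L_2\to L_1$ is continuous and surjective, and $(\ref{eq 2 thm 32})$ holds at every point of $L_2$. I want to show $T^{-1}$ is orthogonality preserving, for which it suffices to show $\varphi^{-1}(\mathcal{O}_1)\cap F_2=\emptyset$ and that $\varphi$ is injective; given those, the representation $(\ref{eq 2 thm 32})$ shows $\delta_s T$ is, at each $s$, either $\lambda_s\,\delta_{\varphi(s)}$ (when $\varphi(s)\in F_1$, so $s\in F_2$) or a full-rank real-linear combination of $\delta_{\varphi(s)}$ and $\overline{\delta_{\varphi(s)}}$ (when $\varphi(s)\in\mathcal{O}_1$), and in either case $T(f)(s)=0 \Leftrightarrow f(\varphi(s))=0$, whence $\hbox{coz}(Tf)=\varphi^{-1}(\hbox{coz}(f))$ and disjoint cozero sets pull back to disjoint cozero sets for $T^{-1}$. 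Injectivity of $\varphi$ on $\varphi^{-1}(\mathcal{O}_1)$ is Remark \ref{r consequences}$(e)$. The remaining points to rule out are: $s\in F_2$ with $\varphi(s)\in\mathcal{O}_1$, and $s_1\neq s_2$ in $F_2$ with $\varphi(s_1)=\varphi(s_2)=:t\in F_1$.

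The main obstacle is precisely this last step, and it is where the hypothesis on $F_2$ enters. The point is that $\bigcup\{\varphi^{-1}(t): t\in F_1,\ \varphi^{-1}(t)\cap F_2\ \text{non-trivial or}\ \varphi^{-1}(t)\cap\mathcal{O}_1\neq\emptyset\ \text{together with}\ F_2\}$ exhibits degeneracy only on a set built from $F_2$. Concretely, I would argue that if $s_0\in F_2$ has $\varphi(s_0)\in\mathcal{O}_1$, or if $\varphi$ identifies two points of $F_2$, then the rank of $T$ restricted to functions supported near $\varphi(s_0)$ drops, so that, as in the proof of Theorem \ref{t aut cont}, one produces a nonzero $h$ with $\hbox{coz}(h)$ contained in the set of such ``bad'' points $t\in F_1$; pulling back, the corresponding bad set $D\subseteq L_2$ is where $\delta_s T$ fails to be full-rank. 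Using surjectivity of $T$ one shows $D\subseteq F_2$ and $D$ is (relatively) open in $L_2$ — it is the set where $T(i)$, suitably interpreted, vanishes for structural rather than definitional reasons — contradicting that $F_2$ has empty interior. Making the openness of this exceptional set precise, and checking it is genuinely nonempty and contained in $F_2$ when a degeneracy occurs, is the technical heart of the argument; everything else is a routine adaptation of the cozero/separation bookkeeping already used in Theorem \ref{t aut cont} and Remark \ref{r consequences}.
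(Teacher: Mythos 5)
Your part $(a)$ is essentially the paper's argument: the paper also deduces that $\varphi$ is a homeomorphism with $\mathrm{supp}(\delta_{\varphi(s)}T^{-1})=\{s\}$ and then applies Remark \ref{r consequences}$(d)$ to both $T$ and $T^{-1}$ to get $\varphi(\mathcal{O}_2)=\mathcal{O}_1$ and hence $\varphi(F_2)=F_1$. One small inaccuracy: for $s\in F_2$ with $\varphi(s)\in\mathcal{O}_1$ the value $T(i)(s)$ is \emph{not} a free choice that can be set to $0$; since $\varphi(s)\notin F_1$ it is uniquely determined as $T(g)(s)$ for $g\equiv i$ near $\varphi(s)$, and membership of $s$ in $F_2$ only forces it to be real. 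This does not derail $(a)$, because the exclusion of such points is anyway obtained from Remark \ref{r consequences}$(d)$ applied to $T^{-1}$, but the parenthetical justification as written is wrong.

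Part $(b)$ has a genuine gap: you correctly reduce the problem to ruling out the degenerate points ($s\in F_2$ with $\varphi(s)\in\mathcal{O}_1$, or distinct points of $F_2$ identified by $\varphi$), but the step where the hypothesis on $F_2$ is actually used — producing a non-empty \emph{open} subset of $F_2$ — is only gestured at, and the route you propose (showing that the set of degenerate points is open) is doubtful: nothing prevents a single isolated degeneracy, and openness of that set is not something the structure theorem gives you. The paper's argument sidesteps this entirely by arguing contrapositively from a concrete witness: if $T^{-1}$ fails to preserve orthogonality, there are $f_1,f_2$ with $f_1f_2\neq 0$ but $T(f_1)\perp T(f_2)$, so $U:=\hbox{coz}(f_1)\cap\hbox{coz}(f_2)$ is a non-empty open subset of $L_1$; since $\{T(1)(s),T(i)(s)\}$ is a basis of $\mathbb{C}_{\mathbb{R}}$ for every $s\in\mathcal{O}_2$ (Remark \ref{r consequences}$(e)$), no point of $\varphi(\mathcal{O}_2)$ can lie in $U$. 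Taking any non-zero real-valued $k$ with $\hbox{coz}(k)\subseteq U$, the set $\hbox{coz}(T(k))$ is automatically open (continuity of $T(k)$), non-empty (injectivity of $T$), and contained in $\varphi^{-1}(U)\subseteq L_2\setminus\mathcal{O}_2=F_2$ by $(\ref{eq 2 thm 32})$ — contradicting that $F_2$ has empty interior. The open set is thus obtained for free as a cozero set of a continuous image, rather than as a set of degenerate points; without some device of this kind your sketch does not close.
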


\begin{proof} $(a)$ If $T$ is bi-orthogonality preserving, it can be easily seen that $\varphi: L_2 \to L_1$ is a homeomorphism, and for each $s\in L_2$, supp$(\delta_{\varphi(s)} T^{-1}) = \{ s\}$. By Remark \ref{r consequences}$(d)$, applied to $T$ and $T^{-1}$, we have $\varphi(F_2) = F_1$ and $\varphi (\mathcal{O}_2) = \mathcal{O}_1$. Then
$\varphi^{-1} (\mathcal{O}_1) \cap F_2= \emptyset.$ So, $a)$ is clear.

$ (b).$ Let us assume that $F_2$ has empty interior. Arguing by
contradiction we suppose that $T^{-1}$ is not orthogonality
preserving. Then there exist $f_1, f_2 \in C_r(L_1)$ with $f_1 f_2
\neq 0$, but $T(f_1) \perp T(f_2)$. Thus $U:= \hbox{coz}(f_1) \cap
\hbox{coz}(f_2)$ is a non-empty open subset of $L_1.$ Keeping
again the notation of Theorem \ref{t op between Cr(K)} for $T$, we
recall that, by Theorem \ref{t aut cont} 
and Remark \ref{r consequences}, $Z_3= \emptyset,$
$Z_2=\emptyset$, $\varphi(L_2) = L_1$, $\varphi (\mathcal{O}_2)
\subset \mathcal{O}_1,$ $\varphi|_{\mathcal{O}_2}$ is injective, and for each $s\in \mathcal{O}_2$, and $\{T(1)
(s), T(i) (s)\}$ is a basis of $\mathbb{C}_{\mathbb{R}} =
\mathbb{R}\times \mathbb{R}.$

By the form of $T$, there are no points of $\varphi (\mathcal{O}_2)$ in $U = \hbox{coz}(f_1) \cap \hbox{coz}(f_2)$ (because for each $s \in \mathcal{O}_2,$ $T(f)(s) \neq 0$ when $f(\varphi(s)) \neq 0$). Now, let $k$ be a non-zero element in $C(L_1,\mathbb{R})$, with coz$(k) \subseteq \hbox{coz}(f_1) \cap \hbox{coz}(f_2)$. By Theorem \ref{t op between Cr(K)} $(\ref{eq 2 thm 32})$, it is clear that $\varphi(\hbox{coz}(T(k))) \subseteq \hbox{coz} (k)$, and hence, since  $\varphi (\mathcal{O}_2) \subseteq \mathcal{O}_1$, $\hbox{coz}(T(k))$ is a non-empty subset of $F_2$, against our hypotheses.
\end{proof}

As we have already seen, an orthogonality preserving linear bijection between $C_r(L)$-spaces needs not to be
biorthogonality preserving. Example \ref{example OP+bijection - BiOP} also shows that, unlike in the complex case,
the existence of an orthogonality preserving linear bijection between $C_r(L)$-spaces does not guarantee that the corresponding compacts spaces are homeomorphic. We next provide a characterisation of those (linear) mappings which are bi-orthogonality preserving. As a consequence, we shall see that if there exists a bi-orthogonality preserving linear map $T:C_r(L_1)\to C_r(L_2)$ then $L_1$ and $L_2$ are homeomorphic.\smallskip

 \begin{theorem}\label{t charcactersitation biop} Let $T:
 C_r(L_1)\to C_r(L_2)$ be a mapping. The following statements are
 equivalent:

\begin{enumerate}[$(a)$] \item $T$ is a bi-orthogonality preserving linear surjection;
\item There exists a (surjective) homeomorphism $\varphi:L_2\to L_1$ with $\varphi(\mathcal{O}_2)=\mathcal{O}_1,$ a function $a_1=\gamma_1+i \gamma_2$ in $C_r(L_2)$ with $a_1(s)\neq 0$ for all $s\in L_2,$ and a function $a_2=\eta_1+i \eta_2:L_2\to\CC$ continuous on $\mathcal{O}_2$ with the property that  $$ 0<\inf_{s\in \mathcal{O}_2}\left| \det\left( \begin{array}{cc} \gamma_1(s) & \eta_1(s) \\ \gamma_2(s) & \eta_2(s) \\ \end{array} \right) \right|\leq \sup_{s\in \mathcal{O}_2}\left|\det\left( \begin{array}{cc} \gamma_1(s) & \eta_1(s) \\ \gamma_2(s) & \eta_2(s) \\ \end{array} \right) \right|<+\infty ,$$ such that $$ T(f)(s)=a_1(s) \ \Re\hbox{e}f({\varphi (s)}) +  a_2(s) \ \Im\hbox{m}f({\varphi (s)})$$ for all $s\in L_2$ and $f\in C_r(L_1).$
\end{enumerate}
\end{theorem}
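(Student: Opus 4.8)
The equivalence is to be proved in both directions, and by far the harder direction is $(a)\Rightarrow(b)$; the converse $(b)\Rightarrow(a)$ should follow by a routine, if slightly tedious, direct verification. I would start with $(b)\Rightarrow(a)$: given the stated data, linearity of $T$ is immediate, surjectivity follows because $\varphi$ is a homeomorphism and the $2\times 2$ matrix is invertible with uniformly bounded inverse on $\mathcal{O}_2$ (so one can solve pointwise for $\Re\hbox{e}f\circ\varphi^{-1}$ and $\Im\hbox{m}f\circ\varphi^{-1}$ and check the resulting function is continuous and takes real values on $F_2$, using $a_1(s)\ne 0$ everywhere and $a_2(s)\in\mathbb{R}$, $a_1(s)\in\mathbb{R}$ on $F_2$, which is forced by $T(f)\in C_r(L_2)$). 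For the orthogonality equivalence, note that since $\varphi$ is a bijection, $f\perp g$ in $C_r(L_1)$ iff $\hbox{coz}(f)\cap\hbox{coz}(g)=\emptyset$ iff $\hbox{coz}(f\circ\varphi)\cap\hbox{coz}(g\circ\varphi)=\emptyset$; then one checks that $\hbox{coz}(T(f))$ and $\hbox{coz}(f\circ\varphi)$ determine each other — $\subseteq$ is clear from the formula, and $\supseteq$ uses that $\{a_1(s),a_2(s)\}$ is a basis of $\mathbb{C}_{\mathbb{R}}$ for $s\in\mathcal{O}_2$ and $a_1(s)\ne 0$ for $s\in F_2$ (on $F_2$ the imaginary part of $f\circ\varphi$ vanishes, so only $a_1$ matters). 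This gives $f\perp g\Leftrightarrow T(f)\perp T(g)$.

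**The direction $(a)\Rightarrow(b)$.** Here I would invoke the accumulated structure theory. Since $T$ is a bijection, Theorem \ref{t aut cont} gives continuity, and Remark \ref{r consequences} gives $Z_3=\emptyset$, $Z_2=\emptyset$ (the latter because $Z_2=\emptyset$ was actually derived inside the proof of Theorem \ref{t aut cont}), hence $Z_1=L_2$, $\varphi:L_2\to L_1$ is continuous and surjective, $\varphi|_{\mathcal{O}_2}$ is injective with $\varphi(\mathcal{O}_2)\subseteq\mathcal{O}_1$, and $\{T(1)(s),T(i)(s)\}$ is a basis of $\mathbb{C}_{\mathbb{R}}$ for every $s\in\mathcal{O}_2$. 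Proposition \ref{p topological conditions}$(a)$ upgrades this: bi-orthogonality preservation forces $\varphi$ to be a homeomorphism with $\varphi(F_2)=F_1$ and $\varphi(\mathcal{O}_2)=\mathcal{O}_1$. Then I set $a_1:=T(1)$ and $a_2:=T(i)$ (the symbol $T(i)$ as defined before Lemma \ref{l 3.1}); equation $(\ref{eq 2 thm 32})$ is exactly the required representation $T(f)(s)=a_1(s)\Re\hbox{e}f(\varphi(s))+a_2(s)\Im\hbox{m}f(\varphi(s))$ on $Z_1=L_2$. That $a_1=T(1)$ lies in $C_r(L_2)$ is automatic; $a_2$ is continuous on $\varphi^{-1}(\mathcal{O}_1)=\mathcal{O}_2$ by Lemma \ref{l 3.1}. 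It remains to show $a_1(s)\ne 0$ for \emph{all} $s\in L_2$, and the two-sided bound on the determinant.

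**The remaining points and the main obstacle.** The determinant of $\begin{pmatrix}\gamma_1&\eta_1\\\gamma_2&\eta_2\end{pmatrix}$ at $s$ is, up to sign, exactly $|\det|$ of the change-of-basis matrix from the standard basis of $\mathbb{C}_{\mathbb{R}}$ to $\{a_1(s),a_2(s)\}$; it is nonzero on $\mathcal{O}_2$ precisely because that pair is a basis there, and it is continuous on $\mathcal{O}_2$. The content is the \emph{uniform} two-sided bound: I would obtain the upper bound from $\|a_1\|=\|T(1)\|<\infty$ and $\sup_{\mathcal{O}_2}|a_2|<\infty$ (Lemma \ref{l 3.1}), so $|\det|$ is bounded above on $\mathcal{O}_2$. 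For the lower bound I would apply the same analysis to $T^{-1}$, which by Proposition \ref{p topological conditions}$(a)$ (or directly, since $T$ is bi-orthogonality preserving and hence so is $T^{-1}$ up to the already-established homeomorphism) has support map $\varphi^{-1}$ and satisfies an analogous formula with coefficient functions $b_1,b_2$ that are bounded on $\mathcal{O}_1$; composing $T$ and $T^{-1}$ pointwise shows the change-of-basis matrix at $s$ and its inverse at $\varphi(s)$ are mutually inverse, so boundedness of $b_1,b_2$ forces $|\det|$ bounded \emph{below} away from $0$ on $\mathcal{O}_2$. The values $a_1(s)\ne 0$ for $s\in F_2$: here $(\ref{eq 2 thm 32})$ reads $T(f)(s)=a_1(s)\Re\hbox{e}f(\varphi(s))$, so $a_1(s)=0$ would make $\delta_s T=0$, contradicting $Z_3=\emptyset$; and $a_1(s)\ne 0$ for $s\in\mathcal{O}_2$ because $\{a_1(s),a_2(s)\}$ is a basis, so in particular $a_1(s)\ne 0$. \textbf{The main obstacle} I anticipate is precisely the uniform lower bound on the determinant over $\mathcal{O}_2$: one must argue that the bijectivity of $T$ together with bi-orthogonality propagates to a quantitative (not merely pointwise-nonvanishing) control, which is where the boundedness conclusions of Lemma \ref{l 3.1} applied symmetrically to $T$ and $T^{-1}$ do the work; care is needed because $\mathcal{O}_2$ need not be closed, so no compactness shortcut is available and the estimate must be genuinely uniform.
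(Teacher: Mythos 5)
Your proposal is correct and follows essentially the same route as the paper's proof: both directions use the same ingredients (Theorem \ref{t aut cont}, Remark \ref{r consequences}, Proposition \ref{p topological conditions} to get $Z_2=Z_3=\emptyset$ and the homeomorphism $\varphi$, the choice $a_1=T(1)$, $a_2=T(i)$ with the representation $(\ref{eq 2 thm 32})$, the upper determinant bound from Lemma \ref{l 3.1}, and the lower bound by applying the same analysis to $T^{-1}$ and observing that $N_{\varphi(s)}=M_s^{-1}$), and the converse is the same pointwise matrix inversion to exhibit $T^{-1}$ explicitly.
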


\begin{proof} $(a)\Rightarrow (b)$. Since every bi-orthogonality preserving linear mapping is injective, we can assume that $T:C_r(L_1)\to C_r(L_2)$ is a bi-orthogonality preserving linear bijection. We keep the notation given in Theorem \ref{t op between Cr(K)}. We have already shown that $Z_3= \emptyset,$ $Z_2=\emptyset$, $\varphi: L_2\to L_1$ is a surjective homeomorphism, $\varphi (\mathcal{O}_2) = \mathcal{O}_1,$ and for each $s\in \mathcal{O}_2$, $\{T(1) (s), T(i) (s)\}$ is a basis of $\mathbb{C}_{\mathbb{R}} =
\mathbb{R}\times \mathbb{R}$ (compare Theorem \ref{t aut cont}, Remark \ref{r consequences} and Proposition \ref{p topological conditions}). Taking $a_1 = T(1)= \gamma_1 +i \gamma_2 $ and $a_2 = T(i)= \eta_1 +i\eta_2$ we only have to show that $$ 0<\inf_{s\in \mathcal{O}_2}\left| \det\left( \begin{array}{cc} \gamma_1(s) & \eta_1(s) \\ \gamma_2(s) & \eta_2(s) \\ \end{array} \right) \right|\leq \sup_{s\in \mathcal{O}_2}\left|\det\left( \begin{array}{cc} \gamma_1(s) & \eta_1(s) \\ \gamma_2(s) & \eta_2(s) \\ \end{array} \right) \right|<+\infty .$$ Let us denote $M_s= \left(
      \begin{array}{cc}
      \gamma_1(s) & \eta_1(s) \\
      i \gamma_2(s) & i \eta_2(s) \\
      \end{array}
      \right).$ Clearly $\det (M_s) \neq 0$, for every $s\in \mathcal{O}_2$ and $T(f) (s) = M_s \cdot \left(
      \begin{array}{c}
      \Re\hbox{e} f(\varphi(s))  \\
      \Im\hbox{m} f(\varphi(s))  \\
      \end{array}
      \right),$ for every $f\in C_r(L_1), s\in L_2$. By the boundedness of $T(1):L_2 \to \mathbb{C}$ and $T(i)|_{\mathcal{O}_2} : \mathcal{O}_2 \to \mathbb{C}$ (see Lemma \ref{l 3.1}) there exists $M>0$ such that $\left|det(M_s)\right|\leq M$ for all $s\in \mathcal{O}_2.$

Applying the above arguments to the mapping $T^{-1}$ we find a surjective homeomorphism $\psi = \varphi^{-1}: L_1\to L_2$, a mapping $T^{-1} (i): L_1 \to L_2$ and $m>0,$ such that $\psi (\mathcal{O}_1) = \mathcal{O}_2$, for each $t\in \mathcal{O}_1$, $\{T^{-1}(1) (t), T^{-1}(i) (t)\}$ is a basis of $\mathbb{C}_{\mathbb{R}} =
\mathbb{R}\times \mathbb{R},$ $T^{-1} (g) (t) = N_t \cdot \left(
      \begin{array}{c}
      \Re\hbox{e} g(\psi(t))  \\
      \Im\hbox{m} g(\psi(t))  \\
      \end{array}
      \right)$ $(g\in C_r(L_2), t\in L_1)$, $\left|\det (N_t) \right|\leq m,$ for all $t\in \mathcal{O}_1,$ where $N_t = \left(
      \begin{array}{cc}
      \Re\hbox{e} T^{-1}(1)(t) & \Re\hbox{e} T^{-1}(i)(t) \\
      i \Im\hbox{m} T^{-1}(1)(t) & i \Im\hbox{m} T^{-1}(i)(t) \\
      \end{array}
      \right).$ It can be easily seen that, for each $s\in \mathcal{O}_2,$ $N_{\varphi(s)} = M_{s}^{-1},$ which shows that $\left|det(M_s)\right|\geq \frac{1}{m}$, for all $s\in \mathcal{O}_2.$

$(b)\Rightarrow (a)$. Let $T:C_r(L_1):\to C_r(L_2)$ be a mapping satisfying the hypothesis in $(b).$ Clearly, $T$ is linear, and since $\varphi (F_2) = F_1,$ $Tf(s)\in \RR$ for all $s\in F_2$ and $f\in C_r (L_1)$ (that is, $T(f) \in C_r (L_2)$). We can easily check that, under these hypothesis, $T$ is injective and preserves orthogonality.

We shall now prove that $T$ is surjective. Indeed, for each $s\in \mathcal{O}_2$
$$T(f)(s)=\left( \begin{array}{c}
      \Re\hbox{e}g(s) \\
      \Im\hbox{m}g(s) \\
      \end{array}
      \right)=\left(
      \begin{array}{cc}
      \gamma_1(s) & \eta_1(s) \\
      i \gamma_2(s) & i \eta_2(s) \\
      \end{array}
      \right) \cdot \left( \begin{array}{c}
      \Re\hbox{e}f({\varphi (s)}) \\
      \Im\hbox{m}f({\varphi (s)}) \\
      \end{array}
      \right)$$ $$= M_s \cdot \left( \begin{array}{c}
      \Re\hbox{e}f({\varphi (s)}) \\
      \Im\hbox{m}f({\varphi (s)}) \\
      \end{array}
      \right),$$ thus,
      $$\left( \begin{array}{c}
      \Re\hbox{e}f({\varphi (s)}) \\
      \Im\hbox{m}f({\varphi(s)}) \\
      \end{array}
      \right)=M_s^{-1} \cdot \left( \begin{array}{c}
      \Re\hbox{e}g(s) \\
      \Im\hbox{m}g(s) \\
      \end{array}
      \right).$$

We define $b_1(t):L_1\to \CC$ and $b_2:\mathcal{O}_1\to\CC$ by $b_1 (t) =\widetilde{ \gamma}_1 (t) + i \widetilde{ \gamma}_2  (t)$ and $b_2 =\widetilde{ \eta}_1  (t)+ i \widetilde{ \eta}_2 (t)$ ($t\in \mathcal{O}_1$), where $M_{\varphi^{-1}(t)}^{-1} = \left(
\begin{array}{cc}
\widetilde{\gamma}_1(t) & \widetilde{\eta}_1(t) \\
i \widetilde{\gamma}_2(t) & i \widetilde{\eta}_2(t) \\
\end{array}
\right),$ and $b_1(t)=\frac{1}{\gamma_1(\varphi^{-1}(t))},$ for every $t\in F_1$.
Then $S:C_r(L_2)\to C_r(L_1),$ defined by $ S(g)(t)= b_1(t)\ \Re\hbox{e}g(\varphi^{-1}(t)) + b_2(t)\ \Im\hbox{m}g(\varphi^{-1}(t)),$ is linear, preserves orthogonality and it is easy to check that $S=T^{-1}.$
It follows that $T$ is bi-orthogonality preserving.
\end{proof}

Let $T$ be a bi-orthogonality preserving linear mapping with associated homeomorphism $\varphi: L_2 \to L_1$. Clearly, the operator $S: C_r(L_1)\to C_r(L_2)$, $S(f) (s) := f(\varphi(s))$ is a $^*$-isomorphism. Having in mind that a linear mapping $T: A\to B$ between real C$^*$-algebras is a $^*$-isomorphism if, and only if,
the complex linear extension $\widetilde{T} : A\oplus i A \to B\oplus i B$, $\widetilde{T} (a+ib) = T(a) + i T(b)$ is a $^*$-isomorphism, we get the following corollary.

\begin{corollary}\label{c surjective bio isomorphism} The following statements are equivalent: \begin{enumerate}[$(a)$]\item There exists a bi-orthogonality preserving linear bijection $T: C_r(L_1)\to C_r(L_2)$;
\item There exists a C$^*$-isomorphism $S: C_r(L_1)\to C_r(L_2)$;
\item There exists a C$^*$-isomorphism $\widetilde{S}: C(L_1)\to C(L_2)$;
\item $L_1$ and $L_2$ are homeomorphic.$\hfill\Box$
\end{enumerate}
\end{corollary}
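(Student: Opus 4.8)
The plan is to close the cycle of implications $(a)\Rightarrow(b)\Rightarrow(c)\Rightarrow(d)\Rightarrow(a)$; most of the ingredients are already available. For $(a)\Rightarrow(b)$ I would note that a bi-orthogonality preserving linear bijection is in particular an orthogonality preserving linear bijection, so Theorem \ref{t charcactersitation biop} (equivalently, the combination of Theorem \ref{t op between Cr(K)}, Theorem \ref{t aut cont} and Proposition \ref{p topological conditions}$(a)$) supplies a surjective homeomorphism $\varphi\colon L_2\to L_1$ with $\varphi(\mathcal O_2)=\mathcal O_1$, and therefore $\varphi(F_2)=F_1$. As recorded in the remark immediately preceding the statement, the composition operator $S(f)=f\circ\varphi$ is then a genuine C$^*$-isomorphism $C_r(L_1)\to C_r(L_2)$ — it does land in $C_r(L_2)$ precisely because $\varphi(F_2)=F_1$.

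For $(b)\Rightarrow(c)$ I would use that the complexification of the real C$^*$-algebra $C_r(L_i)$ is $C(L_i)$ together with the criterion recalled just above, namely that a linear map between real C$^*$-algebras is a $*$-isomorphism exactly when its complex-linear extension is; hence $\widetilde S\colon C(L_1)\to C(L_2)$ is a C$^*$-isomorphism. The implication $(c)\Rightarrow(d)$ is the commutative Gelfand--Naimark / Banach--Stone theorem: a C$^*$-isomorphism between commutative unital C$^*$-algebras dualises to a homeomorphism of their character spaces. Finally, for $(d)\Rightarrow(a)$ I would reverse the construction used for $(a)\Rightarrow(b)$: a homeomorphism $\varphi\colon L_2\to L_1$ — compatible, as it must be in this setting, with the decompositions $L_i=\mathcal O_i\cup F_i$ — induces the composition C$^*$-isomorphism $S(f)=f\circ\varphi$, and being a surjective algebra isomorphism it satisfies $fg=0\iff S(f)S(g)=0$, so it is a bi-orthogonality preserving linear bijection.

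The step I expect to need the most care is the closing of the loop at $(d)\Rightarrow(a)$ (equivalently, $(c)\Rightarrow(b)$): one has to make sure that the homeomorphism extracted from the complex-analytic data respects the distinguished closed sets $F_i$ — equivalently, the open sets $\mathcal O_i$ — so that the induced composition operator carries $C_r(L_1)$ \emph{onto} $C_r(L_2)$, and not merely $C(L_1)$ onto $C(L_2)$. This compatibility is the genuinely ``real'' content here, over and above the classical Banach--Stone theorem, and it is exactly what the structural analysis of Section 4 (Theorem \ref{t op between Cr(K)}, Theorem \ref{t aut cont}, Remark \ref{r consequences} and Proposition \ref{p topological conditions}) provides; once that is granted, the four implications are routine.
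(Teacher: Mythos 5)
Your implications $(a)\Rightarrow(b)$, $(b)\Rightarrow(a)$ and $(a)\Rightarrow(d)\Leftrightarrow(c)$ are fine and follow the paper's intended route (the paper's entire justification is the remark preceding the corollary: the support homeomorphism $\varphi$ from Theorem \ref{t charcactersitation biop} satisfies $\varphi(\mathcal{O}_2)=\mathcal{O}_1$, hence $\varphi(F_2)=F_1$, so $f\mapsto f\circ\varphi$ is a $^*$-isomorphism, and conversely a $^*$-isomorphism preserves zero products in both directions). The problem is exactly the step you flagged, $(d)\Rightarrow(a)$, and your resolution of it does not work. You assert that a homeomorphism $L_2\to L_1$ must be ``compatible with the decompositions $L_i=\mathcal{O}_i\cup F_i$'' because the structural analysis of Section 4 provides this; but Theorem \ref{t op between Cr(K)}, Remark \ref{r consequences} and Proposition \ref{p topological conditions} extract that compatibility \emph{from} a given bi-orthogonality preserving map, whereas in $(d)\Rightarrow(a)$ you are trying to \emph{construct} such a map from a bare homeomorphism, which carries no information about the sets $F_i$. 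The appeal is therefore circular, and the step genuinely fails: take $K_1=\{t\}$ with $\sigma_1=\mathrm{id}$ and $K_2=\{s_1,s_2\}$ with $\sigma_2$ the transposition. Then $L_1=F_1=\{t\}$ and $L_2=\mathcal{O}_2=\{s_1\}$ are homeomorphic and $C(L_1)\cong C(L_2)\cong\mathbb{C}$, yet $C_r(L_1)=\mathbb{R}$ and $C_r(L_2)=\mathbb{C}_{\mathbb{R}}$ admit no real-linear bijection at all. So $(c),(d)$ do not imply $(a),(b)$ unless $(d)$ is read as ``homeomorphic via a map carrying $F_2$ onto $F_1$''; this defect sits in the statement itself, and the paper's own one-line proof is equally silent about this direction, but your write-up should not claim the loop closes when it does not.

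A secondary point: your $(b)\Rightarrow(c)$ rests on the claim that the complexification of $C_r(L_i)$ is $C(L_i)$. It is not: $C_r(L_i)\cong C(K_i)^{\tau_i}$, whose complexification is $C(K_i)$, and the sum $C_r(L_i)+iC_r(L_i)$ inside $C(L_i)$ is not direct, since $C_r(L_i)\cap iC_r(L_i)=\{f : f|_{F_i}=0\}$ (already $\mathbb{C}_{\mathbb{R}}$ has complexification $\mathbb{C}\oplus\mathbb{C}$, not $\mathbb{C}$). The implication $(b)\Rightarrow(c)$ is nevertheless true, but it should be obtained by passing through $(b)\Rightarrow(a)\Rightarrow(d)$, i.e.\ via the support homeomorphism, rather than by complexifying.
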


\textbf{Acknowledgements:} The authors gratefully thank to the Referee for the constructive comments and detailed recommendations which definitely helped to improve the readability and quality of the paper.

\end{document}